\theoremstyle{plain}
\newtheorem{theorem}{Theorem}[section]
\newtheorem{lemma}[theorem]{Lemma}
\newtheorem{corollary}[theorem]{Corollary}
\newtheorem{prop}[theorem]{Proposition}
\theoremstyle{definition}
\newtheorem{definition}{Definition}[section]
\theoremstyle{remark}
\newtheorem{remark}[theorem]{Remark}
\newtheorem{example}[theorem]{Example}
\newenvironment{continueexample}[1]
  {\newcommand\continuanceref{\ref{#1}}\continuancex}
  {\endcontinuancex}
  \numberwithin{equation}{section}
\newcommand{\OT}{\text{OT}}
\newcommand{\oOT}{\overline{\OT}}
\newcommand{\DOT}{\text{DOT}}
\newcommand{\RR}{\mathbb{R}}
\newcommand{\Pcal}{\mathcal{P}}
\newcommand{\Xcal}{\mathcal{X}}
\newcommand{\Ycal}{\mathcal{Y}}
\newcommand{\eps}{\varepsilon}
\renewcommand{\epsilon}{\varepsilon}
\titleformat{\section}[runin]{\normalfont\bfseries}{\thesection .}{0.5em}{}
\titleformat{\subsection}[runin]{\normalfont\bfseries}{\thesubsection .}{0.5em}{}
\newcommand{\de}{\mathrm{d}}
\DeclareMathOperator{\argmin}{argmin}
\begin{document}

\title{Characterization of transport optimizers via graphs and\\ applications to Stackelberg-Cournot-Nash equilibria
}
\author{Beatrice Acciaio\thanks{ETH Zurich, Department of Mathematics, \emph{beatrice.acciaio@math.ethz.ch}}\;\; and\;\, Berenice Anne Neumann\thanks{Trier University, Department IV - Mathematics, \emph{neumannb@uni-trier.de}}}
\maketitle
	
\allowdisplaybreaks

\abstract{
We introduce graphs associated to transport problems between discrete marginals, that allow to characterize the set of all optimizers given one primal optimizer. 
In particular, we establish that connectivity of those graphs is a necessary and sufficient condition for uniqueness of the dual optimizers.
Moreover, we provide an algorithm that can efficiently compute the dual optimizer that is the limit, as the regularization parameter goes to zero, of the dual entropic optimizers. 
Our results find an application in a
Stackelberg-Cournot-Nash game, for which we obtain existence and characterization of the equilibria.\\[0.4cm]
{\emph{Key words:} optimal transport, connected graphs, entropic regularization, Stackelberg-Cournot-Nash equilibria}
}

\section{Introduction.}

Starting with the seminal works by Monge \cite{monge1781memoire} and Kantorovich \cite{kantorovich1942translocation}, optimal transport theory became a vibrant field of research with many applications in the most various fields, including economics, finance and machine learning, see e.g. \cite{galichon2018optimal,acciaio2016model,acciaio2020causal,backhoff2020adapted,acciaio2020cot,acciaio2021cournot}. 
Optimal transport is concerned with the question of how a given probability distribution can be coupled with another given probability distribution in a cost efficient way. 
Under weak conditions, one can show equivalence of this (primal) problem and a dual one. In the case of continuous measures on Euclidean spaces, in many situations of interest the dual optimizer is unique (up to translation), see  \cite[Appendix B]{BerntonUniqueness}, \cite[Cor. 2.7]{delBarrioUniqueness}, \cite[Prop. 7.18]{santambrogio2015optimal} and \cite{StaudtNonUniqueness}. However, in the case where the marginals have finite support, there are several natural examples illustrating how uniqueness can easily fail. To the best of our knowledge, only a sufficient criterion for uniqueness has been described in \cite{StaudtNonUniqueness}. Since non-uniqueness happens regularly, it is of interest to determine the set of all dual optimizers. However, the classical algorithms for the optimal transport problem (see \cite{ComputationalOT} for an overview) output only one (nearly optimal) solution.

In the present work, we use tools from graph theory in order to characterize the set of all optimizers in optimal transport problems with finitely supported marginals. Other authors have been working at the intersection of the two theories, for example studying optimal transport on graphs as in \cite{leonard2016lazy}, or in order to analyze a multi-marginal optimal transport problem as in \cite{pass2021monge}. Our approach goes in a different direction. The starting point consists in associating to each optimal transport problem (in a discrete setting) a family of graphs $G^\gamma$, where $\gamma$ varies over all (primal) optimizers, and $G^\gamma$ describes its support. 
What is crucial for our analysis is the connectivity of the graph $G$ obtained as union of all graphs $G^\gamma$. This allows us to characterize uniqueness (up to translation) of the optimizer to the dual transport problem, in the sense that uniqueness holds if and only if $G$ is connected. On the other hand, when connectivity fails, we can describe in a simple way the set of all dual optimizers by starting with one primal optimizer $\gamma$ of the original transport problem. This is achieved by decomposing the optimal transport problem in subproblems (corresponding to the connected components of the graph $G^\gamma$). For every subproblem, the corresponding graph is connected and a unique dual optimizer is obtained. From the  dual optimizers for the subproblems, we can then determine the set of all dual optimizers for the original problem.

A second contribution of this paper regards the approximation by entropic regularization.
Notably, by adding an entropic penalization term to the transport problem, one obtains a strictly convex problem such that both primal and dual problem admit a unique solution. The entropic transport problem gained popularity because of its computational tractability, e.g. via the Sinkhorn algorithm of Cuturi \cite{cuturi2013sinkhorn}, and since it provides an approximation of the original transport problem, in the sense that the optimal costs converge and that the optimizers of the regularized problems converge to an optimizer of the original problem, see \cite{NutzWieselConvergenceDual,ComputationalOT}. 
In a discrete setting, Cominetti and San Mart\'in \cite{CominettiSanMartin} show that the limit of the dual optimizers of the regularized problems is a specific dual optimizer for the original problem and in \cite{weed2018entropic_convergence} it is shown that the convergence is exponentially fast. The limit optimizer is called centroid because it is characterized geometrically as a particular ``central'' point of the convex set of all optimizers. However, the computation is complex, as it is necessary to solve several nested convex optimization problems.
Our contribution in this direction is twofold. First of all, we provide a simple algorithm to compute the centroid. Further, by leveraging on the previously exposed results, we can describe the set of all dual optimizers of the original problem, starting from the unique primal optimizers of the entropic ones. Given the efficiency in computing the latter, this provides a tractable way to find all dual transport optimizers.

Our last main contribution concerns an application to static games with a continuum of agents as introduced by Aumann \cite{AumannContinuum1964,aumann1966continuum}. In these games, agents with different types choose among a set of actions to minimize their costs which depend on their own type and action as well as on the actions of all other agents (mean-field interaction).
Existence of (Cournot-Nash) equilibria has then been established by Schmeidler~\cite{SchmeidlerStaticContinuum}, Mas-Colell~\cite{MasColellStaticContinuum} and Khan~\cite{khan1989cournot}. However, relying on classical game theory, no further results regarding for example  uniqueness or characterization of equilibria had been obtained until the work of Blanchet and Carlier~\cite{BlanchetCN}. The authors there introduced a class of games with separable cost functions, where equilibria can be characterized by minimizing a cost function that includes an optimal transport problem, and proposed a uniqueness criterion.
In this work we consider a Stackelberg version of this game, where in addition a principal is participating to the game, setting up some cost to be paid by the agents according to their action, and at the same time facing a cost that depends both on this and on the distribution of actions of the agents. Relying on the connection of these games with optimal transport established in \cite{BlanchetCN,BlanchetCNFinite}, we find conditions to ensure existence of equilibria. Interestingly, the optimal choice of costs for the principal corresponds to finding a dual optimizer to an optimal transport problem. We can therefore apply the results illustrated above to describe the optimal strategies of the principal. We conclude by investigating whether entropic regularization gives nearly optimal solutions and providing a numerical example. \\

\noindent{\bf Organization of the rest of the paper.} In Section~\ref{sec:prelim} we recall the optimal transport problem, the relevant graph theoretic notions, and important results from both optimal transport and graph theory. In Section~\ref{sec:graph_for_OT} we introduce the graph associated to the optimal transport problem and prove the first two main results of the paper: the uniqueness criterion and the characterization of the set of all optimizers. In Section~\ref{sec:entropic} we turn to the entropic regularization and provide the characterization of the limit of the regularized dual optimizers. Finally, in Section~\ref{sec:SCNE} we consider the game between a continuum of agents and a principal, describe existence results and connect the optimization problem of the principal to the problem of finding all dual optimizers for certain optimal transport problems. We conclude the section by providing approximation arguments and a numerical illustration.

\section{Preliminaries.}
\label{sec:prelim}
In this section we recall some fundamental results in optimal transport and in graph theory, that will be used throughout the paper.

\subsection{Finite Optimal Transport.}\label{sect.fOT}
Let $\mu$ and $\nu$ be two discrete probability measures on $\RR^d$, with finite supports $\Xcal$ and $\Ycal$ having cardinality $n_\Xcal$ and $n_\Ycal$, respectively. For a function $c:\Xcal\times \Ycal \to\RR_+$, the optimal transport problem between $\mu$ and $\nu$ with respect to the cost $c$ is given by
\begin{equation}\label{eq.OT}
\OT(\mu,\nu, c) = \inf_{\gamma \in \Pi(\mu, \nu)} \int c(x,y)\gamma(dx,dy),
\end{equation}
where $\Pi(\mu, \nu)\subseteq\Pcal(\Xcal\times\Ycal)$ is the set of probability measures on $\Xcal\times\Ycal$ with first marginal equal to $\mu$ and second marginal equal to $\nu$. 
An element $\gamma$ of $\Pi(\mu,\nu)$ is called a coupling of $\mu$ and $\nu$, and is called an \emph{optimal coupling}, or \emph{primal optimizer}, if it is an optimizer for problem \eqref{eq.OT}.   
The associated dual optimization problem reads as
\begin{align}\label{eq.DOT}
\DOT (\mu, \nu, c) = \sup  \Big\{ \textstyle{ \int_\Xcal \varphi d\mu + \int_\Ycal\psi d\nu : \varphi:\Xcal\to\RR, \psi:\Ycal\to\RR},\ \varphi(x) + \psi(y) \le c(x,y) \, \forall x \in \Xcal, y \in \Ycal\Big\}.
\end{align}
A pair $(\varphi,\psi)$ satisfying the constraints in \eqref{eq.DOT} is called \emph{feasible}, and referred to as \emph{dual optimizer} if it is an optimizer for problem \eqref{eq.DOT}.
Since the finite optimal transport problem is a linear optimization problem, we immediately see that both the set of all primal optimizers and the set of all dual optimizers are convex. 
We refer the reader to the manuscript of Villani~\cite{VillaniOldAndNew} for a thorough exposition of the optimal transport theory. We summarize below some of the crucial results about problems \eqref{eq.OT} and \eqref{eq.DOT} that will be useful for later reference. For this, we recall that a set $\Gamma\subseteq \Xcal\times\Ycal$ is called \emph{$c$-cyclically monotone} if, for any $N\in\mathbb{N}$ and any collection $(x_1,y_1),\ldots,(x_N,y_N)\in\Gamma$, the inequality
\[
\sum_{i=1}^N c(x_i,y_i)\leq \sum_{i=1}^N c(x_i,y_{i+1})
\]
holds, with the convention $y_{N+1}=y_1$. A coupling $\gamma\in\Pcal(\Xcal\times\Ycal)$ is said to be \emph{$c$-cyclically monotone} if it is concentrated on a $c$-cyclically monotone set.

\begin{theorem}[\cite{VillaniOldAndNew}, Theorem 5.10, Remark 5.12]\label{thm.510}
In the above discrete setting, we have:
\begin{itemize}
\item[(i)] duality holds, i.e.\
$\OT (\mu, \nu, c)=\DOT (\mu, \nu, c)$;
\item[(ii)] both the primal problem \eqref{eq.OT} and the dual problem \eqref{eq.DOT} admit solutions;
\item[(iii)] there is a $c$-cyclically monotone set $\Gamma\subseteq \Xcal\times\Ycal$ such that, for $\gamma\in\Pi(\mu,\nu)$, the following are equivalent:
\begin{itemize}
\item[1.] $\gamma$ is optimal for \eqref{eq.OT};
\item[2.] $\gamma$ is concentrated on $\Gamma$;
\item[3.] $\gamma$ is $c$-cyclically monotone;
\end{itemize}
\item[(iv)] let $\gamma\in\Pi(\mu,\nu)$, and $(\varphi,\psi)$ be a feasible pair for the dual problem, then $\gamma$ and $(\varphi,\psi)$ are optimal solutions for the primal resp. dual problem if and only if they are complementary, i.e.
\[
\varphi(x)+\psi(y)=c(x,y)\, \gamma\text{-a.s.};
\]
\item[(v)] the union of the supports of all primal optimizers is the smallest $c$-cyclically monotone set contained in $\Xcal\times\Ycal$  and such that all primal optimizers are concentrated on it.
\end{itemize}
\end{theorem}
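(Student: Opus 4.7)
The plan is to derive all five parts as consequences of finite-dimensional linear programming duality applied to \eqref{eq.OT} and \eqref{eq.DOT}, together with one Rockafellar-type construction of a dual potential from a $c$-cyclically monotone set.

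First I would cast \eqref{eq.OT} and \eqref{eq.DOT} as a primal-dual pair of finite linear programs: $\gamma$ is an $n_\Xcal n_\Ycal$-dimensional nonnegative vector subject to the marginal equalities, and $(\varphi,\psi)$ is an $(n_\Xcal+n_\Ycal)$-dimensional vector of free variables subject to the $n_\Xcal n_\Ycal$ inequalities. The primal is feasible ($\gamma=\mu\otimes\nu$) and bounded below by $0$, and the dual is feasible ($\varphi\equiv 0$, $\psi\equiv\min_{x,y}c(x,y)$), so standard LP duality yields (i) and (ii). For (iv), if $\gamma$ and $(\varphi,\psi)$ are optimal then by (i)
\[
0=\int\bigl(c(x,y)-\varphi(x)-\psi(y)\bigr)\,\gamma(\de x,\de y),
\]
and nonnegativity of the integrand forces equality $\gamma$-a.s.; the converse direction follows from weak duality.

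Next I would prove (iii). For the direction $1\Rightarrow 2\Rightarrow 3$, fix any optimal dual pair $(\varphi^*,\psi^*)$ and set $\Gamma:=\{(x,y):\varphi^*(x)+\psi^*(y)=c(x,y)\}$; by (iv) every primal optimizer is supported on $\Gamma$, and $\Gamma$ is $c$-cyclically monotone because for $(x_i,y_i)\in\Gamma$,
\[
\sum_{i=1}^N c(x_i,y_i)=\sum_{i=1}^N\bigl(\varphi^*(x_i)+\psi^*(y_i)\bigr)=\sum_{i=1}^N\bigl(\varphi^*(x_i)+\psi^*(y_{i+1})\bigr)\le\sum_{i=1}^N c(x_i,y_{i+1}).
\]
The technical step is $3\Rightarrow 1$: given $\gamma$ concentrated on a $c$-cyclically monotone set $\Gamma'$, fix $(x_0,y_0)\in\Gamma'$ and define
\[
\varphi(x):=\inf\Bigl\{\textstyle\sum_{i=1}^{N}\bigl(c(x_{i+1},y_i)-c(x_i,y_i)\bigr):N\ge 1,\,(x_i,y_i)_{i=1}^N\subseteq\Gamma',\,x_{N+1}=x\Bigr\},
\]
together with $\psi(y):=\inf_{x\in\Xcal}\bigl(c(x,y)-\varphi(x)\bigr)$. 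The $c$-cyclic monotonicity of $\Gamma'$ ensures $\varphi(x_0)=0$ (so $\varphi$ is finite) and $\varphi(x)+\psi(y)=c(x,y)$ on $\Gamma'$, so $(\varphi,\psi)$ is dual feasible with $\int\varphi\,\de\mu+\int\psi\,\de\nu=\int c\,\de\gamma$; by (i) both are optimal.

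Finally, for (v), let $S$ be the union of the supports of all primal optimizers. By (iv), $S\subseteq\{\varphi^*+\psi^*=c\}$ for every fixed dual optimizer $(\varphi^*,\psi^*)$, so $S$ is $c$-cyclically monotone as a subset of such a set, and trivially every primal optimizer is concentrated on $S$. Minimality is immediate: if $\Gamma''$ is any set on which all primal optimizers concentrate, then it must contain $\mathrm{supp}(\gamma)$ for every optimal $\gamma$, hence $S\subseteq\Gamma''$. The only real obstacle is the Rockafellar construction in $3\Rightarrow 1$; the remaining items are essentially bookkeeping around LP duality and complementary slackness.
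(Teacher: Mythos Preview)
The paper does not prove this theorem; it is stated with a citation to Theorem~5.10 and Remark~5.12 of Villani, so there is no proof in the paper to compare against. Your sketch follows the standard route and is essentially correct: finite LP duality handles (i), (ii) and (iv); the contact set $\{\varphi^*+\psi^*=c\}$ of any dual optimizer supplies the $c$-cyclically monotone $\Gamma$ for (iii); and the Rockafellar potential construction closes the implication $3\Rightarrow 1$, after which (v) is immediate.

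One small imprecision in your Rockafellar formula: you fix $(x_0,y_0)\in\Gamma'$ but then allow the chain $(x_1,y_1),\ldots,(x_N,y_N)$ to start at an arbitrary point of $\Gamma'$. Without the basepoint constraint $(x_1,y_1)=(x_0,y_0)$, the assertion $\varphi(x_0)=0$ does not follow from $c$-cyclic monotonicity; the cyclic inequality only controls sums along closed loops, and an unanchored chain ending at $x_0$ is not closed. With the basepoint constraint added, the lower bound $\varphi(x_0)\ge 0$ is exactly $c$-cyclic monotonicity applied to the cycle $(x_0,y_0),(x_2,y_2),\ldots,(x_N,y_N)$, and the rest of the argument (finiteness of $\varphi$, equality $\varphi+\psi=c$ on $\Gamma'$ via the one-step extension of chains) goes through as you indicate.
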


\begin{remark}[uniqueness]
\label{remark:uniqueness}
It is immediate to see that if $(\varphi, \psi)$ is a dual optimizer, then also the pair obtained by translation, $(\varphi + a, \psi - a)$, $a\in\RR$, is a dual optimizer. One of the main results of the present paper consists in characterizing all dual optimizers, and for this we decide to adopt a normalization assumption. In light of this, we say that the dual optimizer is \emph{unique} if it is unique up to translation.
\hfill$\diamond$
\end{remark}

\subsection{Graph Theory.} 
\label{sec:GT}
A graph  is a pair $G=(V,E)$ of two sets, such that $V \neq \emptyset$ is a finite set and $E\subseteq [V]^2$, where $[V]^2$ is the set of all two-element subsets of $V$. Any element of $V$ is called a vertex. Moreover, we say that $e=\{v,w\}\in E$ is an edge and $v$ and $w$ are the end vertices of $e$. 
A graph $G'=(V',E')$ with $V' \subseteq V$ and $E' \subseteq E$ is called a subgraph of $G$, and in this case we write $G' \subseteq G$. We say that a graph $G$ is maximal with respect to some property if there is no graph $H\neq G$ with the same property and such that $G\subseteq H$. 
Given a graph $G=(V,E)$ and a set $U \subseteq V$, the induced subgraph $G[U]:=(U,E')\subseteq G$ is the graph such that $E'=\{\{v,w\} \in E: v,w \in U\}$, i.e. it contains all edges whose both end vertices lie in $U$.

A \emph{path} in $G=(V,E)$ is a sequence of vertices $P=v_0 \ldots v_l$  ($l \ge 0$) such that all $v_i$ are distinct and  $\{v_i,v_{i+1}\} \in E$ for all $i \in \{0, \ldots, l-1\}$. We call $v_0$ and $v_l$ the end vertices of the path $P$ and say that $P$ joins the vertices $v_0$ and $v_l$. Note that we allow paths of length zero, i.e. consisting of one vertex only. 
We say that a graph $G=(V,E)$ is \emph{connected} if any two of its vertices are linked by a path. We say that a set $U\subseteq V$ is connected in $G$ if the induced subgraph $G[U]$ is connected. 

\begin{prop}[\cite{DiestelGT}, Proposition 1.4.1]\label{lemma.ordering}
Let $G=(V,E)$ be a connected graph and let $v \in V$ be an arbitrary vertex. We can order the vertices of $G$ as $v_1, \ldots, v_n$ such that $v=v_1$ and $G[\{v_1, \ldots, v_i\}]$ is connected for all $i \in \{2, \ldots, n\}$.
\end{prop}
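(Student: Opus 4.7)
The plan is to build the ordering greedily and inductively, proving at each step that the property is preserved. I would set $v_1 := v$ and, having constructed $v_1,\dots,v_i$ with $i<n$ such that $U_i := \{v_1,\dots,v_i\}$ induces a connected subgraph, choose $v_{i+1}$ to be any vertex in $V\setminus U_i$ that is adjacent in $G$ to some vertex of $U_i$. The crucial claim is that such a vertex always exists as long as $U_i \neq V$, and that adding it preserves connectedness of the induced subgraph.

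For existence, I would use the connectedness of $G$: pick any $u \in U_i$ and any $w \in V\setminus U_i$, and let $P = u_0 u_1 \dots u_\ell$ be a path joining $u=u_0$ to $w=u_\ell$ in $G$. Since $u_0 \in U_i$ and $u_\ell \notin U_i$, there is a smallest index $j\geq 1$ with $u_j \notin U_i$; then $u_{j-1} \in U_i$ and $\{u_{j-1}, u_j\} \in E$, so $v_{i+1}:=u_j$ is adjacent in $G$ to some vertex of $U_i$. For the connectedness of $G[U_{i+1}]$, I would argue as follows: by the induction hypothesis any two vertices of $U_i$ are joined by a path in $G[U_i] \subseteq G[U_{i+1}]$, and $v_{i+1}$ is joined to $v_{i-j+1} \in U_i$ (where I abuse notation, meaning the neighbor $u_{j-1}$) by an edge in $G[U_{i+1}]$, so concatenation of paths yields a path in $G[U_{i+1}]$ between any two vertices.

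The iteration terminates after $n-1$ steps, at which point $U_n = V$ and we have produced the desired ordering $v_1,\dots,v_n$. The main (and only) subtle point is the existence step, where one must extract from a path in $G$ an edge crossing the boundary of $U_i$; everything else is routine bookkeeping. In fact no real obstacle arises because the path-based ``boundary edge'' argument above is essentially the definition of connectedness. An equivalent, perhaps more transparent, phrasing would be to perform a breadth-first (or depth-first) traversal of $G$ from $v$ and order vertices by discovery time; I would mention this as a remark but prefer the inductive construction for a self-contained proof.
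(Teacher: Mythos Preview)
Your proof is correct and is essentially the standard argument (the one given in Diestel). Note, however, that the paper does not supply its own proof of this proposition: it is quoted as a preliminary result with a citation to \cite{DiestelGT}, so there is no in-paper argument to compare against. Your greedy construction with the boundary-edge existence step is exactly the expected proof; the only cosmetic issue is the awkward index ``$v_{i-j+1}$'' in the connectedness verification, which you already flag as an abuse of notation---just write ``$v_{i+1}$ is adjacent to some $u_{j-1}\in U_i$'' and the argument is clean.
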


A maximal connected subgraph of $G$ is a \emph{component} of $G$. We highlight that components are induced subgraphs of $G$, that any component can be identified by its vertex set, and that the vertex sets $V_1, \ldots, V_N$ of all components of $G$ partition the vertex set of $G$. Hence, with a slight abuse of notation, we will say that $V_1, \ldots, V_N$ are the components of $G$.
We say that a graph $G=(V,E)$ is \emph{bipartite} if we can partition the vertex set into two classes $W_1$ and $W_2$ such that every edge has one end vertex in $W_1$ and the other one in $W_2$. If $G$ is a bipartite graph and $e=\{w_1,w_2\}\in E$, then we write $e=[w_1,w_2]$ to indicate that $w_1 \in W_1$ and $w_2 \in W_2$. 
A \emph{cycle} is a sequence $v_0 v_1\ldots v_{l-1} v_0$ such that $v_0\ldots v_{l-1}$ is a path and $\{v_{l-1},v_0\} \in E$. A connected graph without cycles is called a \emph{tree}.

\begin{example}
Figure~\ref{fig:graphExamples} shows examples of graphs: The graph in Figure~\ref{fig:bipartite} is  bipartite graph with vertex set $V=\{x_1, x_2, y_1,y_2\}$ and edge set $E=\{ [x_1,y_1], [x_1, y_2], [x_2, y_1], [x_2,y_2]\}$. It is bipartite  with classes $W_1=\{x_1,x_2\}$ and $W_2=\{y_1,y_2\}$. Moreover, it is connected since there is a path from every vertex to every other vertex. For example, a path from $x_1$ to $y_1$ is given by $x_1y_1$ and a path from $x_1$ to $x_2$ is given by $x_1y_1x_2$. The graph in Figure~\ref{fig:components}  is not connected since there is no path from $5$ to $1$. Instead, it has two components $V_1 = \{1,2,3,4\}$ and $V_2=\{5,6,7\}.$  The graph is not a tree because it contains the cycle $2,3,4$, nor bipartite because we cannot partition $\{2,3,4\}$ into two sets such that any edge goes from one set to the other. Finally, the graph in Figure~\ref{fig:tree} is a tree because it is connected and does not contain any cycles. 

\begin{figure}[h]
\centering
\subcaptionbox{A bipartite graph \label{fig:bipartite}}[0.2\textwidth]{
\begin{tikzpicture}
\clip (-1,-0.25) rectangle (2.75, 2.5);

\coordinate[label={[label distance=0.2cm]left:$x_2$}] (L1) at (0,0.5);
\fill (L1) circle (3pt);
\coordinate[label={[label distance=0.2cm]left:$x_1$}] (L2) at (0,1.5);
\fill (L2) circle (3pt);
\coordinate[label={[label distance=0.2cm]right:$y_2$}] (R1) at (1.5,0.5);
\fill (R1) circle (3pt);
\coordinate[label={[label distance=0.2cm]right:$y_1$}] (R2) at (1.5,1.5);
\fill (R2) circle (3pt);
\coordinate (R3) at (4,0);
\draw[thick] (R2) -- (L1) -- (R1);
\draw[thick] (R2) -- (L2) -- (R1);

\end{tikzpicture}}
\subcaptionbox{A graph with two components\label{fig:components}}[0.35\textwidth]{\begin{tikzpicture}
\clip (-0.55,-0.25) rectangle (3.75, 2.5);

\coordinate[label={[label distance=0.2cm]right:$1$}] (1) at (2,0.9);
\fill (1) circle (3pt);
\coordinate[label={[label distance=0.2cm]left:$2$}] (2) at (0,1.25);
\fill (2) circle (3pt);
\coordinate[label={[label distance=0.2cm]right:$3$}] (3) at (1.5,1.5);
\fill (3) circle (3pt);
\coordinate[label={[label distance=0.2cm]right:$4$}] (4) at (2.5,2.25);
\fill (4) circle (3pt);
\coordinate[label={[label distance=0.2cm]left:$5$}] (5) at (0.5,0.25);
\fill (5) circle (3pt);
\coordinate[label={[label distance=0.2cm]right:$6$}] (6) at (2,0.25);
\fill (6) circle (3pt);
\coordinate[label={[label distance=0.2cm]right:$7$}] (7) at (3,1);
\fill (7) circle (3pt);

\draw[thick] (1) -- (2) -- (3) -- (4) -- (2);
\draw[thick] (5) -- (6) -- (7);

\end{tikzpicture}}		
\subcaptionbox{A tree 
\label{fig:tree}}[0.3\textwidth]{\begin{tikzpicture}
\clip (-0.55,-0.25) rectangle (3.75, 2.5);

\coordinate[label={[label distance=0.2cm]left:$1$}] (R) at (1.5,0);
\fill (R) circle (3pt);
\coordinate[label={[label distance=0.2cm]left:$2$}] (Ba1) at (1,1);
\fill (Ba1) circle (3pt);
\coordinate[label={[label distance=0.2cm]left:$4$}] (Ba21) at (0,2);
\fill (Ba21) circle (3pt);
\coordinate[label={[label distance=0.2cm]left:$5$}] (Ba22) at (1.75,2);
\fill (Ba22) circle (3pt);
\coordinate[label={[label distance=0.2cm]left:$3$}] (Bb1) at (3,1);
\fill (Bb1) circle (3pt);
\coordinate[label={[label distance=0.2cm]left:$6$}] (Bb2) at (3,2);
\fill (Bb2) circle (3pt);
\draw[thick] (Ba1) -- (R) -- (Bb1) -- (Bb2);
\draw[thick] (Ba22) -- (Ba1) -- (Ba21);

\end{tikzpicture}}
\caption{Examples of graphs}
\label{fig:graphExamples}
\end{figure}
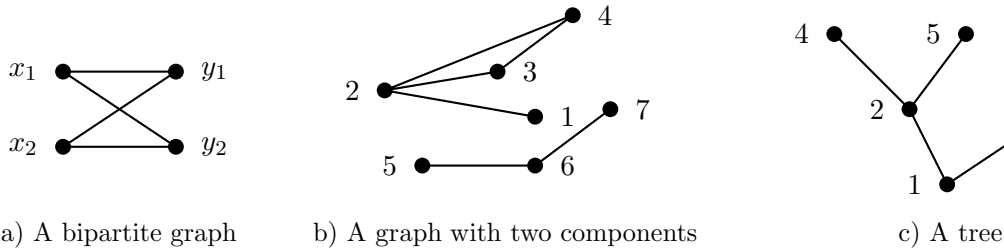
\hfill$\diamond$
\end{example}

\section{Connectivity and Transport Optimizers.}
\label{sec:graph_for_OT}
In this section we introduce graphs that allow to derive structural results on the optimal transport problem. In particular, we provide a necessary and sufficient criterion for uniqueness of the dual optimizer and a characterization of all dual optimizers given one primal optimizer. 
For this, we fix $\mu,\nu$ and $c$ as in Section~\ref{sect.fOT}, and consider the corresponding primal and dual problems \eqref{eq.OT} and \eqref{eq.DOT}.

Let $\gamma$ be a primal optimizer of $\OT(\mu, \nu, c)$. Then we define the bipartite graph $G^\gamma =(V^\gamma, E^\gamma)$ with vertex set $V^\gamma = \Xcal \cup \Ycal$ (with partition $W_1=\Xcal$ and $W_2=\Ycal$) and edge set 
\begin{align*}
E^\gamma = \Big\{ [x,y] : \gamma(x,y)>0 \Big\}.
\end{align*}
Moreover, we consider the bipartite graph $G=(V,E)$ with vertex set $V= \Xcal\cup\Ycal$ and edge set given by the union of all $E^\gamma$ with $\gamma$ optimal for \eqref{eq.OT}, that is
\begin{equation}\label{eq.setE}
E = \Big\{ [x,y] : \gamma(x,y)>0 \text{ for some primal optimizer $\gamma$}\Big\}.
\end{equation} 
We call $G$ the graph associated to $\OT(\mu,\nu,c)$.

\begin{remark}\label{rem.supp}
Note that the set $E$ in \eqref{eq.setE} corresponds to the set described in Theorem~\ref{thm.510}-(v). Moreover, by Theorem~\ref{thm.510}-(iii),(v), $\gamma \in \Pi(\mu, \nu)$ is a primal optimizer if and only if 
\[
\gamma(x,y)=0 \quad \text{for all } [x,y] \notin E.
\]
\hfill $\diamond$
\end{remark}

\begin{example}
\label{ex1}
Let us illustrate the definitions as well as the differences between the sets $G^\gamma$ and $G$ introduced above in a simple example. Consider $\Xcal = \{1,2,3\}$ and $\Ycal = \{1,2,3,4\}$ and the measures
\[
\mu = \tfrac{1}{4} \delta_{\{1\}} + \tfrac{1}{4} \delta_{\{2\}} + \tfrac{1}{2} \delta_{\{3\}} \quad \text{and} \quad \nu = \tfrac{1}{10} \delta_{\{1\}} + \tfrac{2}{5} \delta_{\{2\}} + \tfrac{1}{5} \delta_{\{3\}} + \tfrac{3}{10} \delta_{\{4\}}.
\] 
Set
$F=\left\{ (1,1), (1,2), (2,1), (2,2), (2,3), (3,2), (3,3), (3,4) \right\}$     
and define
$ c=1_{F^c}$.
By definition of $c$, we have $\OT(\mu, \nu,c)\ge 0$. Since $\gamma: \Xcal \times \Ycal \rightarrow [0,1]$ with
\[
\gamma(1,1) = \tfrac{1}{10}, \, \gamma(1,2) = \tfrac{3}{20}, \, \gamma(2,2) = \tfrac{1}{4}, \, \gamma(3,3) = \tfrac{1}{5}, \, \gamma(3,4) = \tfrac{3}{10}
\] 
is feasible and has cost $0$, we have $\text{OT}(\mu, \nu, c) =0$. The graph $G^\gamma$ is depicted in Figure~\ref{fig:ex1_Ggamma}. 
It is clear that any feasible coupling $\gamma$ is optimal if and only if, for any $(x,y) \in \Xcal \times \Ycal$, $\gamma(x,y) >0 \Rightarrow (x,y) \in F$. Moreover, one can easily see that there exists a coupling  with support $F$, which implies $E=F$. The graph $G$ is depicted in Figure~\ref{fig:ex1_G}. Note that $G^\gamma$ is a proper subgraph of $G$, and that it is not connected although $G$ is.

\begin{figure}[h]
\centering
\subcaptionbox{$G^\gamma$ \label{fig:ex1_Ggamma}}[0.2\textwidth]{
\begin{tikzpicture}

\coordinate[label={[label distance=0.2cm]left:$3$}] (L3) at (0,1.5);
\fill (L3) circle (3pt);
\coordinate[label={[label distance=0.2cm]left:$2$}] (L2) at (0,2.5);
\fill (L2) circle (3pt);
\coordinate[label={[label distance=0.2cm]left:$1$}] (L1) at (0,3.5);
\fill (L1) circle (3pt);
\coordinate[label={[label distance=0.2cm]right:$4$}] (R4) at (2,0.5);
\fill (R4) circle (3pt);
\coordinate[label={[label distance=0.2cm]right:$3$}] (R3) at (2,1.5);
\fill (R3) circle (3pt);
\coordinate[label={[label distance=0.2cm]right:$2$}] (R2) at (2,2.5);
\fill (R2) circle (3pt);
\coordinate[label={[label distance=0.2cm]right:$1$}] (R1) at (2,3.5);
\fill (R1) circle (3pt);
\draw[thick] (L2) -- (R2) -- (L1) -- (R1);
\draw[thick] (R3) -- (L3) -- (R4);

\end{tikzpicture}}	
\hspace*{1.5cm}
\subcaptionbox{$G$ \label{fig:ex1_G}}[0.25\textwidth]{
\begin{tikzpicture}

\coordinate[label={[label distance=0.2cm]left:$3$}] (L3) at (0,1.5);
\fill (L3) circle (3pt);
\coordinate[label={[label distance=0.2cm]left:$2$}] (L2) at (0,2.5);
\fill (L2) circle (3pt);
\coordinate[label={[label distance=0.2cm]left:$1$}] (L1) at (0,3.5);
\fill (L1) circle (3pt);
\coordinate[label={[label distance=0.2cm]right:$4$}] (R4) at (2,0.5);
\fill (R4) circle (3pt);
\coordinate[label={[label distance=0.2cm]right:$3$}] (R3) at (2,1.5);
\fill (R3) circle (3pt);
\coordinate[label={[label distance=0.2cm]right:$2$}] (R2) at (2,2.5);
\fill (R2) circle (3pt);
\coordinate[label={[label distance=0.2cm]right:$1$}] (R1) at (2,3.5);
\fill (R1) circle (3pt);
\draw[thick] (L2) -- (R2) -- (L1) -- (R1) -- (L2) --(R1);
\draw[thick] (L2) -- (R3) -- (L3) -- (R4);	
\draw[thick] (R2) -- (L3);

\end{tikzpicture}}
\caption{The graphs $G^\gamma$ and $G$ from Example~\ref{ex1}}
\label{fig:ex1}
\end{figure}
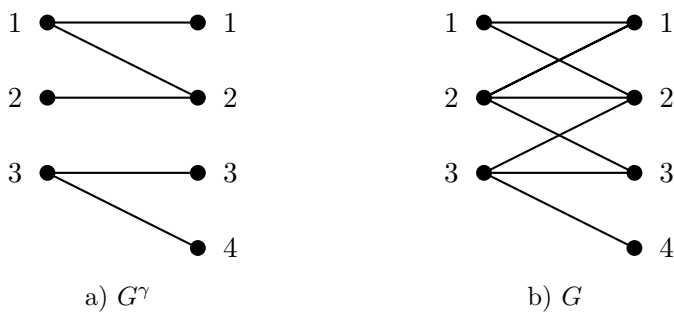
\hfill$\diamond$
\end{example}

\begin{lemma}
\label{lemma:GvsGgamma}
There is a primal optimizer $\gamma^\ast$ such that $G=G^{\gamma^\ast}$.
\end{lemma}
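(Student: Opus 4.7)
The plan is to exhibit such a $\gamma^\ast$ explicitly as a finite convex combination of primal optimizers, exploiting the fact that $E$ is finite and that the set of primal optimizers is convex.

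First, observe that since $\Xcal$ and $\Ycal$ are finite, the edge set $E\subseteq [V]^2$ is finite. By the very definition of $E$ in \eqref{eq.setE}, for each edge $e=[x,y]\in E$ there exists at least one primal optimizer $\gamma_e\in\Pi(\mu,\nu)$ of \eqref{eq.OT} with $\gamma_e(x,y)>0$. Enumerate $E=\{e_1,\dots,e_k\}$ and set
\[
\gamma^\ast \;=\; \frac{1}{k}\sum_{i=1}^k \gamma_{e_i}.
\]

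Next, I would verify that $\gamma^\ast$ is itself a primal optimizer. As noted in Section~\ref{sect.fOT}, the set of primal optimizers is convex (it is the optimal face of a linear program), hence $\gamma^\ast\in\Pi(\mu,\nu)$ and it is optimal for $\OT(\mu,\nu,c)$.

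It remains to check that $E^{\gamma^\ast}=E$. For the inclusion $E\subseteq E^{\gamma^\ast}$: given any $e=[x,y]\in E$, say $e=e_j$, we have $\gamma^\ast(x,y)\geq \tfrac{1}{k}\gamma_{e_j}(x,y)>0$, so $[x,y]\in E^{\gamma^\ast}$. For the reverse inclusion $E^{\gamma^\ast}\subseteq E$: if $[x,y]\in E^{\gamma^\ast}$, then $\gamma^\ast(x,y)>0$, and since $\gamma^\ast$ is a primal optimizer, $[x,y]\in E$ by definition of $E$. Since the two graphs share the vertex set $V=\Xcal\cup\Ycal$ by construction, we conclude $G^{\gamma^\ast}=G$.

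There is no real obstacle here; the only subtlety worth flagging is the appeal to convexity of the set of primal optimizers, which is automatic for the linear program \eqref{eq.OT} and was already recorded in Section~\ref{sect.fOT}.
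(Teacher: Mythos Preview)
Your proof is correct and follows essentially the same approach as the paper: take one primal optimizer per edge of $E$, average them using convexity of the optimizer set, and observe that the average has full support on $E$. You are slightly more explicit than the paper in spelling out the reverse inclusion $E^{\gamma^\ast}\subseteq E$, but the argument is otherwise identical.
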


\begin{proof}
Since the graph $G$ has finitely many vertices, it also has finitely many edges $e_1, \ldots, e_M$, with $e_m=[x_m,y_m]$, $x_m\in\Xcal$ and $y_m\in\Ycal$, for each $m\in\{1,\ldots,M\}$. By construction of $G$, there is for each $m \in \{1, \ldots, M\}$ a primal optimizer $\gamma^m$ such that $\gamma^m(x_m,y_m)>0$. Since the set of all primal optimizers is convex, the coupling defined as
\[
\gamma^\ast = \frac{1}{M} \sum_{m=1}^M \gamma^m
\]	 
is a primal optimizer as well. Moreover, it satisfies 
\[
\gamma^\ast(x_m,y_m) \ge \frac{1}{M} \gamma^m(x_m,y_m)>0\quad \text{for all } m \in \{1, \ldots, M\}.
\] 
Hence, $G^{\gamma^\ast}=G$.
\end{proof}

Next we show that the graph $G$ also characterizes dual optimizers.

\begin{prop}
\label{prop:Gcharacterizes}
Let $(\varphi, \psi)$ be feasible for the dual problem. Then it is a dual optimizer if and only if 
\begin{equation}\label{eq.ug_E}
\varphi(x) + \psi(y) = c(x,y) \quad \text{for all } [x,y] \in E.
\end{equation}
\end{prop}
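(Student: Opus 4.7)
The proof plan is to invoke the complementary slackness characterization from Theorem~\ref{thm.510}-(iv) together with Lemma~\ref{lemma:GvsGgamma}, which does essentially all the work.

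For the \emph{if} direction, I would start with any primal optimizer $\gamma$ (existence guaranteed by Theorem~\ref{thm.510}-(ii)). By Remark~\ref{rem.supp}, the support of $\gamma$ is contained in $E$, so the hypothesis $\varphi(x)+\psi(y)=c(x,y)$ for all $[x,y]\in E$ gives in particular $\varphi(x)+\psi(y)=c(x,y)$ $\gamma$-a.s. Combined with feasibility, Theorem~\ref{thm.510}-(iv) then implies that $(\varphi,\psi)$ is a dual optimizer.

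For the \emph{only if} direction, assume $(\varphi,\psi)$ is a dual optimizer. The key input is Lemma~\ref{lemma:GvsGgamma}: pick the particular primal optimizer $\gamma^\ast$ whose support graph equals $G$, i.e.\ $\gamma^\ast(x,y)>0$ for every $[x,y]\in E$. Applying Theorem~\ref{thm.510}-(iv) to the optimal pair $(\gamma^\ast,(\varphi,\psi))$ yields $\varphi(x)+\psi(y)=c(x,y)$ $\gamma^\ast$-a.s., and since $\gamma^\ast$ charges every edge of $E$, this upgrades to the pointwise equality \eqref{eq.ug_E}.

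There is no real obstacle here: the content is already packaged in Lemma~\ref{lemma:GvsGgamma} (which converts the union-of-supports description of $E$ into a single primal optimizer concentrated on all of $E$) and in part (iv) of Theorem~\ref{thm.510}. The only thing to make sure of in the write-up is that feasibility of $(\varphi,\psi)$ is used explicitly, so that the ``$\le$'' in the dual constraints together with equality $\gamma^\ast$-almost surely on the full edge set $E$ really delivers the claimed pointwise identity on $E$ (and not merely on a smaller subset).
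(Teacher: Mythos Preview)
Your proposal is correct and follows essentially the same approach as the paper: both directions rely on Theorem~\ref{thm.510}-(iv) (complementary slackness), with Lemma~\ref{lemma:GvsGgamma} providing the crucial primal optimizer $\gamma^\ast$ whose support is all of $E$ for the \emph{only if} direction. The only cosmetic difference is that for the \emph{if} direction the paper also uses $\gamma^\ast$, whereas you use an arbitrary primal optimizer together with Remark~\ref{rem.supp}; both variants work equally well.
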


\begin{proof}
Let $\gamma^\ast$ be the optimizer from Lemma~\ref{lemma:GvsGgamma}. Then $G=G^{\gamma^\ast}$ and the support of $\gamma^*$ is $E$, i.e. the union of the supports of all primal optimizers. 
Therefore, if $\varphi(x) + \psi(y) = c(x,y)$ for all $[x,y] \in E$, then $\varphi(x) + \psi(y) = c(x,y)$ $\gamma^\ast-$a.s. Hence, $\gamma^\ast$ and $(\varphi, \psi)$ are complementary, which by Theorem~\ref{thm.510} means that $(\varphi, \psi)$ is a dual optimizer.

Vice versa, if $(\varphi, \psi)$ is a dual optimizer, then, by Theorem~\ref{thm.510}, $\gamma^\ast$ and $(\varphi,\psi)$ have to be complementary. This exactly means that $\varphi(x) + \psi(y) = c(x,y)$ for all $[x,y] \in E$.
\end{proof}

\subsection{Uniqueness.}
In this section, we provide sufficient conditions for the dual optimizer to be unique (meaning uniqueness up to translation, see Remark~\ref{remark:uniqueness}). 

\begin{prop}
\label{prop:unique}
Let $G$ be the graph associated to $\OT(\mu,\nu,c)$. Then:
\begin{itemize}
\item[(i)] if for all non-empty subsets $X\subsetneq \Xcal$ and $Y \subsetneq \Ycal$ we have $\mu(X)\neq \nu(Y)$, then the graph $G$ is connected;
\item[(ii)] if the graph $G$ is connected, then the dual optimizer is unique.
\end{itemize} In particular, the dual optimizer is unique whenever for all non-empty subsets $X\subsetneq \Xcal$ and $Y \subsetneq \Ycal$ we have $\mu(X)\neq \nu(Y)$.
\end{prop}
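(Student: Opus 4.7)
The plan is to prove the two parts separately, and deduce the ``in particular'' statement by combining them.

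For part (ii), I would argue by comparing two arbitrary dual optimizers $(\varphi_1,\psi_1)$ and $(\varphi_2,\psi_2)$ and showing that they differ only by a translation. Setting $\tilde\varphi=\varphi_1-\varphi_2$ and $\tilde\psi=\psi_1-\psi_2$, Proposition~\ref{prop:Gcharacterizes} gives $\tilde\varphi(x)+\tilde\psi(y)=0$ for every edge $[x,y]\in E$. Pick any vertex $v_1\in V$ and set $a=\tilde\varphi(v_1)$ if $v_1\in\Xcal$ or $a=-\tilde\psi(v_1)$ if $v_1\in\Ycal$. Since $G$ is connected, I can invoke Proposition~\ref{lemma.ordering} to order the vertices as $v_1,\ldots,v_n$ in such a way that each $v_i$ with $i\geq 2$ is adjacent to some $v_j$ with $j<i$. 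An inductive walk along this ordering, using the edge relation $\tilde\varphi(x)+\tilde\psi(y)=0$ at each step, shows that $\tilde\varphi\equiv a$ on $\Xcal$ and $\tilde\psi\equiv -a$ on $\Ycal$, which is exactly uniqueness up to translation.

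For part (i), I would argue by contraposition: assume $G$ is disconnected and produce a pair $X\subsetneq\Xcal$, $Y\subsetneq\Ycal$ with $\mu(X)=\nu(Y)$. Let $\gamma^\ast$ be the primal optimizer from Lemma~\ref{lemma:GvsGgamma}, so that $G^{\gamma^\ast}=G$. If $C$ is any connected component of $G$ and $X_C:=C\cap\Xcal$, $Y_C:=C\cap\Ycal$, then first I would observe that both $X_C$ and $Y_C$ are non-empty: every $x\in\Xcal$ has $\mu(x)>0$ (and similarly for $\Ycal$), hence every such vertex must carry at least one edge of $G^{\gamma^\ast}=G$, which is bipartite, so no singleton vertex from $\Xcal$ or $\Ycal$ can be its own component. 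Second, since $\gamma^\ast$ is supported on $E$ and edges of $G$ only connect vertices within the same component, for $x\in X_C$ I have $\gamma^\ast(\{x\}\times\Ycal)=\gamma^\ast(\{x\}\times Y_C)$. Summing over $x\in X_C$ and using the marginal constraints gives $\mu(X_C)=\gamma^\ast(X_C\times Y_C)=\nu(Y_C)$.

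To conclude (i), I would use that disconnectedness means there is a second component $C'\neq C$, which also has $X_{C'}\neq\emptyset$ and $Y_{C'}\neq\emptyset$ by the same argument. Hence $X_C\subsetneq\Xcal$ and $Y_C\subsetneq\Ycal$ while $\mu(X_C)=\nu(Y_C)$, contradicting the hypothesis and proving connectivity of $G$. The final ``in particular'' claim is just the composition of (i) and (ii).

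The main obstacle I expect is making sure the edge-walk in part (ii) is clean; the ordering guaranteed by Proposition~\ref{lemma.ordering} is the key device that converts connectivity into a legitimate inductive propagation, and it avoids having to reason about arbitrary paths or deal with the bipartite structure by hand. Everything else is essentially marginal bookkeeping using $\gamma^\ast$ from Lemma~\ref{lemma:GvsGgamma} together with Remark~\ref{rem.supp}.
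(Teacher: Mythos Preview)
Your proposal is correct and follows essentially the same approach as the paper: for (i) both argue by contradiction via a component of $G^{\gamma^\ast}=G$ and use the marginal constraints to produce $\mu(X)=\nu(Y)$, and for (ii) both propagate the edge constraint $\varphi(x)+\psi(y)=c(x,y)$ along the vertex ordering of Proposition~\ref{lemma.ordering}. The only cosmetic differences are that the paper proves the slightly stronger fact that \emph{every} $G^\gamma$ is connected in (i), and in (ii) works with a single normalized optimizer rather than the difference of two, which has the side benefit of giving an explicit construction of the dual optimizer.
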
	

The last statement of the proposition has already been described by Staudt et al. \cite{StaudtNonUniqueness}. The sufficient condition in (ii) is new, and we will see later in Corollary~\ref{cor:UniqueMeansConnected} that is actually necessary.

\begin{proof}
(i): We are going to show that, for any primal optimizer $\gamma$, the graph $G^\gamma$ is connected. This in turn implies the claim, by considering $\gamma^\ast$ from Lemma~\ref{lemma:GvsGgamma}. 
Assume that there is a primal optimizer $\gamma$ such that $G^\gamma$ is not connected. In this case there are two non-empty sets $U_1$ and $U_2$ such that $U_1 \cap U_2 = \emptyset$, $U_1 \cup U_2 = \Xcal \cup \Ycal$ and that there is no edge from $U_1$ to $U_2$ in $E^\gamma$, i.e., for all $(e_1,e_2) \in E^\gamma$ we have that either $e_1, e_2 \in U_1$ or $e_1,e_2 \in U_2$.
By construction of $G^\gamma$, we can write $U_1 = X_1 \cup Y_1$ and $U_2=X_2 \cup Y_2$ for some subsets $X_1, X_2 \subseteq \Xcal$ and $Y_1, Y_2 \subseteq \Ycal$. Note that all subsets $X_1,X_2,Y_1,Y_2$ are non-empty. Indeed, suppose by contradiction that for example $Y_1=\emptyset$. Then we would have $X_1=U_1\neq\emptyset$. Since $\mu(x)>0$ for all $x\in\Xcal$, then in particular for any $x\in X_1$ there is $y\in\Ycal$ such that $\gamma(x,y)>0$. By the above decomposition of $E^\gamma$ we would then need to have $y\in U_1$ as well, that is $y\in Y_1$, which leads to the desired contradiction.
Similarly, we would find a contradiction by assuming any of the other sets to be empty.

Now note that, since there are no edges from $U_1$ to $U_2$, then
\[
\gamma(x,y)=0 \text{ for all } (x,y) \in(X_1 \times Y_2) \cup (X_2 \times Y_1).
\] 
Together with the fact that $\gamma\in\Pi(\mu,\nu)$, this yields
\begin{align*}
\mu(X_1) = \sum_{x \in X_1} \mu(x) = \sum_{x \in X_1} \sum_{y \in \Ycal} \gamma(x,y) = \sum_{x \in X_1} \sum_{y \in Y_1} \gamma(x,y) = 
\sum_{y \in Y_1} \sum_{x \in X_1}  \gamma(x,y)
= \sum_{y \in Y_1} \nu(y) = \nu(Y_1),
\end{align*} 
which is a contradiction.

(ii): To prove uniqueness up to translation, we fix an arbitrary $x_0 \in \Xcal$ and show that there is exactly one dual optimizer with $\varphi(x_0)=0$.
By Proposition~\ref{lemma.ordering}, there is an ordering $v_1, v_2, \ldots, v_{n_\Xcal+n_\Ycal}$ of the vertices of $G$, with $v_1=x_0$ and such that $G[\{v_1, \ldots, v_i\}]$ is connected for all $i \in \{2, \ldots, n_\Xcal+n_\Ycal\}$.  Consider any dual pair $(\varphi,\psi)$ with $\varphi(v_1)=0$. Since $G[\{v_1, v_2\}]$ is connected, then $[v_1,v_2]\in E$ and, by Proposition~\ref{prop:Gcharacterizes}, we have that $\psi(v_2)=c(v_1,v_2)-\varphi(v_1)$. In the same way, we argue that since $G[\{v_1, v_2, v_3\}]$ is connected, then either $[v_1,v_3]\in E$ or $[v_3,v_2]\in E$. In the first case, $v_3\in\Ycal$ and Proposition~\ref{prop:Gcharacterizes} yields $\psi(v_3)=c(v_1,v_3)-\varphi(v_1)$, while in the second case we have $v_3\in\Xcal$ and $\varphi(v_3)=c(v_3,v_2)-\psi(v_2)$. By iterating this process, we see that all values $\varphi(x), x\in\Xcal$, and $\psi(y), y\in\Ycal$, are uniquely identified.
Hence, the dual optimizer $(\varphi, \psi)$ is unique up to translation.
\end{proof}

Note that the proof of the second part of Proposition~\ref{prop:unique} also describes a way to determine the unique optimizer given the graph $G$. Namely, it suffices to find an ordering $v_1, \ldots, v_{n_\Xcal + n_\Ycal}$ of the vertices of $G$ such that $v_1 = x_0$ and $G[\{v_1, \ldots, v_i\}]$ is connected for all $i \in \{2, \ldots, n_\Xcal+n_\Ycal\}$, and thereafter successively set the values of $\varphi$ resp. $\psi$ according to \eqref{eq.ug_E}.

\begin{continueexample}{ex1}
Since $G$ is connected, there is a unique dual optimizer.
Note that $\mu(\{1,2\}) = \nu(\{1,2\})$, which means that the condition that $\mu(X)\neq\nu(Y)$ for all $X \subsetneq \Xcal$ and $Y \subsetneq \Ycal$ is not necessary for uniqueness.
Moreover, as explained before, we can easily compute the dual optimizer: an ordering satisfying the desired conditions is $1_\Xcal, 1_\Ycal, 2_\Xcal, 2_\Ycal, 3_\Xcal, 3_\Ycal, 4_\Ycal$, where indexes here are to show the belonging set.
Hence, following the method described in the proof of Proposition~\ref{prop:unique}-(ii) and starting with $\varphi(1)=0$, we find that $(\varphi,\psi)$ with $\varphi\equiv 0$ and $\psi\equiv 0$ is the unique dual optimizer. 
\hfill $\diamond$
\end{continueexample}

For a connected graph $G$, the unique dual optimizer immediately characterizes the graph $G$. Later we will see that a weaker statement holds also for general (not necessarily connected) graphs. Namely, we can show that there are some dual optimizers that satisfy \eqref{eq:CharacterizationGConnected} below, while there are always some dual optimizers for which $E \subsetneq \{[x,y]: x\in\Xcal, y\in\Ycal, c(x,y) = \varphi(x) + \psi(y)\}$ (see Corollary~\ref{cor:relationEandPhi}).

\begin{prop}
\label{prop:ChracterizationGconnected}
Assume that the graph $G$ associated to $\OT(\mu,\nu,c)$ is connected and let $(\varphi, \psi)$ be the unique dual optimizer. Then
\begin{equation}
\label{eq:CharacterizationGConnected}
E = \{[x,y]: x\in\Xcal, y\in\Ycal, c(x,y) = \varphi(x) + \psi(y)\}.
\end{equation}
\end{prop}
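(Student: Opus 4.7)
The plan is to prove the two inclusions separately. The inclusion $E\subseteq\{[x,y]:c(x,y)=\varphi(x)+\psi(y)\}$ is immediate from Proposition~\ref{prop:Gcharacterizes}, since $(\varphi,\psi)$ being a dual optimizer means $\varphi(x)+\psi(y)=c(x,y)$ for every $[x,y]\in E$.

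The reverse inclusion is the substantive one, and I would prove it by contradiction. Assume there is $(x_0,y_0)\in\Xcal\times\Ycal$ with $c(x_0,y_0)=\varphi(x_0)+\psi(y_0)$ but $[x_0,y_0]\notin E$; the aim is to exhibit a primal optimizer $\tilde\gamma$ with $\tilde\gamma(x_0,y_0)>0$, contradicting Remark~\ref{rem.supp}. I take the primal optimizer $\gamma^\ast$ from Lemma~\ref{lemma:GvsGgamma}, so that $G^{\gamma^\ast}=G$. Since $G$ is connected and bipartite with $x_0\in\Xcal$ and $y_0\in\Ycal$, there is a path from $x_0$ to $y_0$ of the form
\[
x_0=x_0',\;y_1',\;x_1',\;y_2',\;\ldots,\;x_{k-1}',\;y_k'=y_0,
\]
whose edges $[x_i',y_{i+1}']$ for $i=0,\ldots,k-1$ and $[x_i',y_i']$ for $i=1,\ldots,k-1$ all belong to $E$.

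Closing this path with the hypothetical edge $[x_0,y_0]$ produces an even cycle in the bipartite graph, and around it I would define the alternating signed measure
\[
\sigma\,=\,\delta_{(x_0,y_0)}\,-\,\sum_{i=0}^{k-1}\delta_{(x_i',y_{i+1}')}\,+\,\sum_{i=1}^{k-1}\delta_{(x_i',y_i')}.
\]
A direct inspection of the vertices appearing on each side shows that the marginals of $\sigma$ on both $\Xcal$ and $\Ycal$ vanish. The key step is to verify that $\int c\,\de\sigma=0$: combining the identities $\varphi+\psi=c$ on every edge of the path (from Proposition~\ref{prop:Gcharacterizes}) with the assumed equality at $(x_0,y_0)$ and substituting into $\int c\,\de\sigma$, the $\varphi(x_i')$ and $\psi(y_j')$ values telescope along the alternating sequence and leave zero.

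Finally, for $\eps>0$ sufficiently small, $\tilde\gamma:=\gamma^\ast+\eps\sigma$ is a nonnegative coupling, since the edges $(x_i',y_{i+1}')$ on which $\sigma$ is negative all lie in $E=G^{\gamma^\ast}$, where $\gamma^\ast$ is strictly positive. By the marginal and cost computations just mentioned, $\tilde\gamma$ has the same marginals and same total cost as $\gamma^\ast$, hence is a primal optimizer, while $\tilde\gamma(x_0,y_0)=\eps>0$ contradicts $[x_0,y_0]\notin E$. I expect the cost-cancellation step to be the only delicate part: once the alternating bipartite structure of the path is written out carefully, the $\varphi$- and $\psi$-sums collapse pairwise, and the hypothesis at $(x_0,y_0)$ exactly kills the remaining boundary contribution $\varphi(x_0)+\psi(y_0)$.
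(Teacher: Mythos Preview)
Your proof is correct and follows essentially the same approach as the paper: find a bipartite path in $G$ from $x_0$ to $y_0$, close it with the hypothetical edge $[x_0,y_0]$ to form an even cycle, and shift mass around the cycle to produce a primal optimizer with positive mass at $(x_0,y_0)$. The only cosmetic difference is that the paper verifies optimality of the perturbed coupling via complementarity with $(\varphi,\psi)$ (Theorem~\ref{thm.510}-(iv)), whereas you verify it by computing $\int c\,\de\sigma=0$ directly; these are equivalent since the telescoping you describe is precisely the complementarity identity applied along the cycle.
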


\begin{proof}
By Proposition~\ref{prop:Gcharacterizes}, we have that $E \subseteq \{[x,y]: x\in\Xcal, y\in\Ycal, c(x,y) = \varphi(x) + \psi(y)\}$. To prove the other inclusion, assume by contradiction that there are $\hat x\in\Xcal$ and $\hat y\in\Ycal$ such that $[\hat{x},\hat{y}] \notin E$ while $c(\hat{x}, \hat{y}) = \varphi(\hat{x}) + \psi(\hat{y})$. 
We are now going to construct a primal optimizer $\hat{\gamma}$ such that $\hat{\gamma}(\hat{x},\hat{y})>0$, which yields the desired contradiction.
Since $G$ is connected and bipartite, there is a path  $x_1 y_1 x_2 y_2 \ldots x_l y_l$ from $\hat{x}=x_1$ to $\hat{y}=y_l$ with $x_1,\ldots x_l\in\Xcal$ and $y_1,\ldots,y_l\in\Ycal$. Now let $\gamma$ be a primal optimizer such that $G=G^\gamma$. Note that this implies $\gamma(x_j,y_j)>0$ for $j\in\{1,\ldots,l\}$, and $\gamma(x_j,y_{j-1})>0$ for $j\in\{2,\ldots,l\}$.
Then we set
\[
\delta =   \min_{j \in \{1, \ldots, l\}} \gamma(x_j,y_j)\ \wedge \min_{j \in \{2, \ldots, l\}} \big(1- \gamma(x_{j},y_{j-1})\big).
\] 
Note that $\delta \in(0,1)$. We define
\[
\hat{\gamma}(x,y) =\begin{cases}
\gamma(x,y) - \delta &\text{if } (x,y) = (x_j,y_j) \text{ for some } j \in \{1, \ldots, l\} \\
\gamma(x,y) + \delta &\text{if } (x,y) = (x_j, y_{j-1}) \text{ for some } j \in \{1, \ldots, l\} \\
\gamma(x,y) &\text{else},
\end{cases}
\]
with the convention $y_0:=y_l$.

Now we show that $\hat{\gamma} \in \Pi(\mu, \nu)$. First, note that $\hat{\gamma}(x,y) \in [0,1]$ for all $(x,y) \in \Xcal \times \Ycal$ by choice of $\delta$. Now, let $x \in \Xcal \setminus \{x_1, \ldots, x_l\}$. Then
\[
\sum_{y \in \mathcal{Y}} \hat{\gamma}(x,y) = \sum_{y \in \Ycal} \gamma(x,y) = \mu(x).
\] Now assume that $x= x_j$ for some $j \in \{1, \ldots, l\}$. Then we obtain
\begin{align*}
\sum_{y \in \Ycal} \hat{\gamma}(x_j,y) = \sum_{y \in \Ycal \setminus \{y_j,y_{j-1}\}}\gamma(x_j,y) + \gamma(x_j,y_j) - \delta + \gamma(x_j, y_{j-1}) + \delta = \sum_{y \in \Ycal} \gamma(x_j,y)=\mu(x_j).
\end{align*} 
Analogous computations for $y \in \Ycal$ show that $\hat{\gamma} \in \Pi(\mu,\nu)$. 

To show that $\hat{\gamma}$ and $(\varphi, \psi)$ are complementary, it suffices to note that, for all $(x,y) \neq (\hat{x},\hat{y})$, we have
\[
\hat{\gamma}(x,y) >0 \Rightarrow \gamma(x,y) >0.
\] 
Hence, the complementarity of $\gamma$ and $(\varphi, \psi)$, as well as the choice of $\hat{x}$ and $\hat{y}$, yield that $\hat{\gamma}$ and $(\varphi, \psi)$ are complementary. By Theorem~\ref{thm.510}-(iv) we therefore obtain that $\hat{\gamma}$ is a primal optimizer, which yields $[\hat{x}, \hat{y}] \in E$. This is the desired contradiction.
\end{proof}

\subsection{Optimal Transport on Connected Components.}
In this section we show that the optimal transport problem on $\Xcal \times \Ycal$ is closely related to the optimal transport problems restricted to the components of the relevant graph. For this we introduce the following notation.
Let $X\subseteq\Xcal$ and $Y\subseteq\Ycal$ be such that $\mu(X) = \nu(Y)$. Then we denote the transport problem restricted to $X\times Y$ by
\[
\OT_{X,Y}(\mu,\nu,c):=\OT \left( \frac{1}{\mu(X)} \mu|_{X}, \frac{1}{\nu(Y)} \nu|_{Y}, c|_{X \times Y} \right),
\]
and use the notation $\DOT_{X,Y}(\mu,\nu,c)$ for its dual problem.

\begin{theorem}
\label{thm:Components}
Let $\gamma$ be a primal optimizer of $\OT(\mu,\nu,c)$, and let $V_1, \ldots, V_N$ be the connected components of $G^\gamma$, 
with $V_n=X_n \cup Y_n$ for subsets $X_n \subseteq \Xcal$ and $Y_n\subseteq \Ycal$ for $n \in \{1, \ldots, N\}$. Then:
\begin{itemize}
\item[(i)] for all $n,m \in \{1, \ldots, N\}$ with $n \neq m$, $\gamma(X_n\times Y_m) =0$;
\item[(ii)] for all $n \in \{1, \ldots, N\}$, $\frac{1}{\mu(X_n)} \gamma|_{X_n \times Y_n}$ is a primal optimizer for $\OT_{X_n,Y_n}(\mu,\nu,c)$;
\item[(iii)] it holds that
\begin{align*}
\OT(\mu,\nu, c) = \sum_{n=1}^N \mu(X_n) \OT_{X_n,Y_n}(\mu,\nu,c);
\end{align*}
\item[(iv)] if $(\varphi, \psi)$ is an optimizer for the dual problem $\DOT(\mu,\nu, c)$, then, for all $n \in \{1, \ldots, N\}$,  $(\varphi|_{X_n}, \psi|_{Y_n})$ is an optimizer for the dual problem $\DOT_{X_n,Y_n}(\mu,\nu,c)$.
\end{itemize}
\end{theorem}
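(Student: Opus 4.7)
The plan is to build up the four statements essentially in the order given, using part (i) as the basic bookkeeping fact from which (ii)--(iv) follow.

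For (i), I would argue directly from the definition of $G^\gamma$: if $x \in X_n$ and $y \in Y_m$ with $n \neq m$, then $[x,y]$ would be an edge crossing two components, contradicting maximality, so $\gamma(x,y)=0$. A quick side observation I would record here is that each $X_n$ and each $Y_n$ is non-empty and $\mu(X_n)=\nu(Y_n)>0$: since $\mu(x)>0$ for every $x\in\Xcal$, every vertex in $\Xcal$ must have at least one incident edge in $G^\gamma$ and similarly for $\Ycal$, so a component cannot lie entirely in $\Xcal$ or in $\Ycal$; then the marginal identity for $\mu(X_n)$ follows by the same calculation used in the proof of Proposition~\ref{prop:unique}-(i), but now using (i) to drop the cross-terms.

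For (ii), I would normalize and set $\mu_n := \mu(X_n)^{-1}\mu|_{X_n}$, $\nu_n := \nu(Y_n)^{-1}\nu|_{Y_n}$, $\gamma_n := \mu(X_n)^{-1}\gamma|_{X_n\times Y_n}$. Using part (i), the marginal computation shows $\gamma_n\in\Pi(\mu_n,\nu_n)$. To obtain optimality I would invoke Theorem~\ref{thm.510}-(iii): the support of $\gamma_n$ is contained in the support of $\gamma$, which is $c$-cyclically monotone, hence the restriction is $c$-cyclically monotone on $X_n\times Y_n$ with respect to $c|_{X_n\times Y_n}$, and therefore $\gamma_n$ is primal optimal for $\OT_{X_n,Y_n}(\mu,\nu,c)$.

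For (iii), integrating $c$ against $\gamma$ and splitting via (i) gives
\[
\OT(\mu,\nu,c)=\int c\,d\gamma=\sum_{n=1}^N\int_{X_n\times Y_n} c\,d\gamma=\sum_{n=1}^N\mu(X_n)\int c\,d\gamma_n,
\]
and each integral on the right equals $\OT_{X_n,Y_n}(\mu,\nu,c)$ by (ii). For (iv), the restricted pair $(\varphi|_{X_n},\psi|_{Y_n})$ is feasible for the restricted dual since the inequality $\varphi(x)+\psi(y)\le c(x,y)$ is inherited on $X_n\times Y_n$. Then, using duality (Theorem~\ref{thm.510}-(i)) together with (iii),
\[
\OT(\mu,\nu,c)=\sum_{n=1}^N\bigl(\textstyle\int_{X_n}\varphi\,d\mu+\int_{Y_n}\psi\,d\nu\bigr)\le\sum_{n=1}^N\mu(X_n)\DOT_{X_n,Y_n}(\mu,\nu,c)=\OT(\mu,\nu,c),
\]
so the inequality is an equality in every summand, which is exactly the statement that each $(\varphi|_{X_n},\psi|_{Y_n})$ attains $\DOT_{X_n,Y_n}(\mu,\nu,c)$. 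I do not anticipate a serious obstacle: the real content is (i), and everything else is a careful restatement of duality and cyclical monotonicity on the pieces induced by the components; the only point one has to be mildly careful about is that the restricted marginals make sense, which is exactly what the non-emptiness and mass-balance observation above provides.
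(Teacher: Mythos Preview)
Your proof is correct and the overall architecture matches the paper's, but parts (ii) and (iv) are argued by genuinely different (though closely related) devices.

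For (ii), the paper invokes Villani's restriction theorem (\cite[Theorem~4.6]{VillaniOldAndNew}): any nonzero sub-measure of an optimal coupling, after normalization, is optimal between its own marginals; it then checks that those marginals are $\mu_n,\nu_n$. You instead go through $c$-cyclical monotonicity via Theorem~\ref{thm.510}-(iii), observing that $\mathrm{supp}(\gamma_n)\subseteq\mathrm{supp}(\gamma)$ inherits cyclical monotonicity. This is equally valid and arguably more self-contained, since it uses only results already stated in the paper rather than an external reference. For (iv), the paper shows complementarity of $\gamma_n$ and $(\varphi|_{X_n},\psi|_{Y_n})$ directly and appeals to Theorem~\ref{thm.510}-(iv). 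Your route is a squeeze argument: summing the restricted dual values over the components and comparing with (iii) forces equality termwise. This works because of your side observation $\mu(X_n)=\nu(Y_n)$, which you correctly flag and which is exactly what is needed to factor $\mu(X_n)$ out of $\int_{X_n}\varphi\,d\mu+\int_{Y_n}\psi\,d\nu$. Both arguments are short; the paper's complementarity route is perhaps more direct for a single component, while yours makes the global decomposition in (iii) do double duty.
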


\begin{proof}
(i): This is clear by definition of $G^\gamma$.

(ii): This is an immediate consequence of Theorem 4.6 in Villani~\cite{VillaniOldAndNew}, which asserts, in the current discrete setting, that if $\gamma$ is a primal optimizer and $\gamma'$ is a non-negative measure on $\Xcal\times\Ycal$ such that $\gamma' \le \gamma$ with $\gamma'(\Xcal \times \Ycal)>0$, then $\gamma'/\gamma'(\Xcal \times \Ycal)$ is optimal for $\OT(\mu',\nu',c)$ where $\mu'$ and $\nu'$ are the marginals of $\gamma'/\gamma'(\Xcal \times \Ycal)$. In our case, for any $n\in\{1,\ldots,N\}$, we choose 
\[
\gamma' (x,y) = \begin{cases}
\gamma(x,y) &\text{if }x \in X_n, y \in Y_n \\
0 &\text{otherwise}
\end{cases}
\] 
and note that, by (i),
\[
\gamma'(\Xcal \times \Ycal) = \gamma(X_n \times Y_n) = \sum_{x \in X_n, y \in Y_n} \gamma(x,y) = \sum_{x \in X_n, y \in \Ycal} \gamma(x,y) = \mu(X_n)>0.
\] 
Finally, using (i) we have
\begin{align*}
\mu'(x) &= \frac{1}{\gamma'(\Xcal \times \Ycal)} \sum_{y \in Y_n} \gamma'(x,y) = \frac{1}{\mu(X_n)}\sum_{y \in \Ycal} \gamma(x,y) =  \frac{1}{\mu(X_n)} \mu(x) \quad \text{for } x \in X_n, \\
\nu'(y) &=  \frac{1}{\gamma'(\Xcal \times \Ycal)} \sum_{x \in X_n} \gamma'(x,y) = \frac{1}{\nu(Y_n)}\sum_{x \in \Xcal} \gamma(x,y) =  \frac{1}{\nu(Y_n)}\nu(y) \quad \text{for } y \in Y_n, 
\end{align*} 
which shows that 
$\frac{1}{\mu(X_n)} \gamma|_{X_n \times Y_n}$ is a primal optimizer for $\OT \left( \frac{1}{\mu(X_n)} \mu|_{X_n}, \frac{1}{\nu(Y_n)} \nu|_{Y_n}, c|_{X_n \times Y_n} \right)$.

(iii): By optimality of $\gamma$ and from (i), we have
\[
 \OT(\mu, \nu, c)=\int_{\Xcal \times \Ycal} c\ d\gamma = \sum_{n=1}^N \int_{X_n\times Y_n} c|_{X_n \times Y_n} \  d\gamma|_{X_n \times Y_n}.
\] 
Then by (ii), for all $n \in \{1, \ldots, N\}$,
\[
\frac{1}{\mu(X_n)} \int_{X_n\times Y_n} c|_{X_n \times Y_n}\ d  \gamma|_{X_n \times Y_n} = \OT_{X_n,Y_n}(\mu,\nu,c).
\] 
Combining the two equations gives the desired statement.

(iv): To show that $(\varphi|_{X_n}, \psi|_{Y_n})$ is an optimizer for $\DOT_{X_n,Y_n}(\mu,\nu,c)$, by Theorem~\ref{thm.510}-(iv) it suffices to show that $\frac{1}{\mu(X_n)} \gamma|_{X_n \times Y_n}$ and $(\varphi|_{X_n}, \psi|_{Y_n})$ are complementary. Note that,  for all $x \in X_n$ and $y \in Y_n$, we have
\[
\frac{1}{\mu(X_n)} \gamma|_{X_n \times Y_n} (x,y)>0 \Leftrightarrow \gamma(x,y)>0.
\] 
Since $\gamma$ and $(\varphi,\psi)$ are primal resp. dual optimizers, by Theorem~\ref{thm.510}-(iv) they are complementary, thus the statement follows.
\end{proof}

Given that for any primal optimizer $\gamma$ we have $E^\gamma \subseteq E$, the above theorem immediately allows us to draw conclusions about the graph $G$, and to characterize all primal optimizers given $G$.

\begin{corollary}
\label{cor:Components}
Let $G$ be the graph associated to $\OT(\mu,\nu,c)$ and $V_1, \ldots, V_N$ its connected components, 
with $V_n=X_n \cup Y_n$ for subsets $X_n \subseteq \Xcal$ and $Y_n\subseteq \Ycal$ for  $n \in \{1, \ldots, N\}$. Then:
\begin{itemize}
\item[(i)] any primal optimizer $\gamma$ satisfies $\gamma(X_n\times Y_m) = 0$ for all $n,m \in \{1, \ldots, N\}$ with $n \neq m$;
\item[(ii)] $\gamma \in \Pi(\mu, \nu)$ is a primal optimizer if and only if $\frac{1}{\mu(X_n)} \gamma|_{X_n \times Y_n}$ is optimal for the problem 
$\OT_{X_n,Y_n}(\mu,\nu,c)$
for all $n=1, \ldots, N$;
\item[(iii)] any dual optimizer $(\varphi, \psi)$ satisfies that, for all $n \in \{1, \ldots, N\}$, the vector $(\varphi|_{X_n}, \psi|_{Y_n})$ is an optimizer for $\DOT_{X_n,Y_n}(\mu,\nu,c)$.
\end{itemize}
\end{corollary}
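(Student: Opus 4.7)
The plan is to derive Corollary~\ref{cor:Components} from Theorem~\ref{thm:Components} applied to the distinguished primal optimizer $\gamma^*$ produced by Lemma~\ref{lemma:GvsGgamma}, for which $G = G^{\gamma^*}$. Under this choice, the connected components of $G$ coincide with those of $G^{\gamma^*}$, and Theorem~\ref{thm:Components} directly yields the cost decomposition
\begin{equation*}
\OT(\mu, \nu, c) = \sum_{n=1}^N \mu(X_n) \OT_{X_n, Y_n}(\mu, \nu, c),
\end{equation*}
together with $\mu(X_n) = \nu(Y_n)$ for each $n$. The bulk of the work is then to transfer these statements from $\gamma^*$ to arbitrary primal and dual optimizers, leveraging the inclusion $E^\gamma \subseteq E$.

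For (i), I would argue directly from Remark~\ref{rem.supp}: any primal optimizer $\gamma$ is concentrated on $E$, and if $x \in X_n$, $y \in Y_m$ with $n \neq m$, then $x$ and $y$ lie in distinct components of $G$, so $[x, y] \notin E$, forcing $\gamma(x, y) = 0$. Summing over such $(x, y)$ yields $\gamma(X_n \times Y_m) = 0$.

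For (ii), the forward direction uses (i) to show that $\gamma|_{X_n \times Y_n}$ has marginals $\mu|_{X_n}$ and $\nu|_{Y_n}$, whence its normalization is feasible for $\OT_{X_n, Y_n}(\mu, \nu, c)$; the cost decomposition above then forces each term to match $\mu(X_n) \OT_{X_n, Y_n}(\mu, \nu, c)$, giving the optimality claim. For the converse, the assumption that each $\frac{1}{\mu(X_n)} \gamma|_{X_n \times Y_n}$ lies in $\Pi(\frac{1}{\mu(X_n)} \mu|_{X_n}, \frac{1}{\nu(Y_n)} \nu|_{Y_n})$ forces $\gamma(X_n \times Y_m) = 0$ for $n \neq m$ by probability-mass accounting (the mass $\gamma$ assigns to $\{x\} \times \Ycal$ equals $\mu(x)$ and must already be exhausted on $\{x\} \times Y_n$ for $x \in X_n$); a direct computation then gives $\int c \, d\gamma = \sum_n \mu(X_n) \OT_{X_n, Y_n}(\mu, \nu, c) = \OT(\mu, \nu, c)$, so $\gamma$ is a primal optimizer.

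Finally, for (iii), any dual optimizer $(\varphi, \psi)$ is complementary to $\gamma^*$ by Theorem~\ref{thm.510}-(iv), so applying Theorem~\ref{thm:Components}-(iv) to $\gamma^*$ directly yields that $(\varphi|_{X_n}, \psi|_{Y_n})$ is optimal for $\DOT_{X_n, Y_n}(\mu, \nu, c)$. I do not anticipate any real obstacle; the only delicate point is the bookkeeping in the converse of (ii), namely verifying that the restricted-marginal hypothesis on each $\gamma|_{X_n \times Y_n}$ already implies the cross-component vanishing, which is handled by the probability-mass argument indicated above.
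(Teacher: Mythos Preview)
Your proposal is correct and follows essentially the same route as the paper: apply Theorem~\ref{thm:Components} to the optimizer $\gamma^\ast$ from Lemma~\ref{lemma:GvsGgamma} to obtain the cost decomposition, deduce (i) from the support constraint $E^\gamma\subseteq E$, and handle the converse of (ii) via the same mass-accounting argument that forces cross-component vanishing before summing costs. The paper is somewhat terser (it simply declares (i), (iii) and the ``only if'' of (ii) to be immediate from Theorem~\ref{thm:Components}), but the logic you spell out is exactly what is implicit there.
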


\begin{proof} (i) and (iii) are immediate consequences of Theorem~\ref{thm:Components}, and so is the ``only if" implication in (ii). To see the converse implication, let $\hat{\gamma}$ be a primal optimizer such that $G^{\hat{\gamma}}=G$, which exists by Lemma~\ref{lemma:GvsGgamma}. Then Theorem~\ref{thm:Components}-(iii) applied to $\hat\gamma$ gives
\begin{align*}
\OT(\mu,\nu, c) = \sum_{n=1}^N \mu(X_n) \OT_{X_n,Y_n}(\mu,\nu,c).
\end{align*} 
Now fix any $\gamma\in\Pi(\mu,\nu)$ such that $\frac{1}{\mu(X_n)} \gamma|_{X_n \times Y_n}$ is optimal for $\OT_{X_n,Y_n}(\mu,\nu,c)$ for all $n\in\{1,\ldots,N\}$.
This implies in particular that $\gamma(x,y)=0$ whenever $x \in X_n$ and $y \in Y_m$ with $n \neq m$. Therefore
\begin{align*}
\int_{\Xcal\times\Ycal} c\ d\gamma = \sum_{n=1}^N \int_{X_n \times Y_n}c|_{X_n \times Y_n}\ d\gamma|_{X_n \times Y_n} = \sum_{n=1}^N \mu(X_n) \OT_{X_n,Y_n}(\mu,\nu,c)= \OT(\mu,\nu,c), 
\end{align*} 
which shows that $\gamma$ is indeed optimal. 
\end{proof}

\subsection{Characterization of all Dual Optimizers.}
Relying on the results obtained in the previous sections, we can now characterize the set of all dual optimizers. 

\begin{theorem}
\label{thm:SetOfAllDuals}
Let $\gamma$ be a primal optimizer for $\OT(\mu,\nu,c)$, and let $V_1, \ldots, V_N$ be the connected components of $G^\gamma$,  
with $V_n=X_n \cup Y_n$ for subsets $X_n \subseteq \Xcal$ and $Y_n\subseteq \Ycal$ for  $n \in \{1, \ldots, N\}$. Let $(\varphi_n,\psi_n)$ be the unique optimizer for $\DOT_{X_n,Y_n}(\mu, \nu, c)$ for $n \in \{1, \ldots, N\}$.
Then a pair $(\varphi, \psi)$ is an optimizer for $\DOT(\mu,\nu,c)$ if and only if there are constants $\alpha_1, \alpha_2, \ldots, \alpha_N \in \mathbb{R}$ such that:
\begin{itemize}
\item[(i)] for all $x \in X_n$, we have $\varphi(x) = \varphi_n(x) + \alpha_n + \alpha_1$ if $n\neq 1$, and $\varphi(x)=\varphi_1(x)+\alpha_1$ if $n=1$;
\item[(ii)] for all $y \in Y_n$, we have $\psi(y) = \psi_n(y)  - \alpha_n - \alpha_1$ if $n\neq 1$, and $\psi(y)=\psi_1(y)-\alpha_1$ if $n=1$;
\item[(iii)] for all $n, m \in \{1 \ldots, N\}$ with $n \neq m$, we have
\begin{equation}\label{eq.anm}
- \min_{x \in X_m, y \in Y_n} c(x,y) -\varphi_m(x)- \psi_n(y)\le \alpha_n - \alpha_m \le  \min_{x \in X_n, y \in Y_m} c(x,y) - \varphi_n(x) -\psi_m(y).  
\end{equation}
\end{itemize}
\end{theorem}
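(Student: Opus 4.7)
The plan is to prove both implications by reducing the dual problem on $\Xcal\times\Ycal$ to the restricted dual problems $\DOT_{X_n,Y_n}(\mu,\nu,c)$ on each component $V_n$ of $G^\gamma$. The key preliminary observation is that each $G^\gamma[V_n]$ is connected by definition, so the graph associated to $\OT_{X_n,Y_n}(\mu,\nu,c)$ is connected as well, and Proposition~\ref{prop:unique}-(ii) guarantees that the restricted dual optimizer $(\varphi_n,\psi_n)$ is unique up to translation. This will produce exactly one intrinsic shift $\beta_n\in\RR$ per component, and I would work throughout with these shifts $\beta_1,\ldots,\beta_N$, translating them to the $\alpha$'s of the statement at the end via $\beta_1=\alpha_1$ and $\beta_n=\alpha_n+\alpha_1$ for $n\neq 1$, which is precisely what (i)--(ii) encode, with $\alpha_1$ displaying the unavoidable overall translation freedom from Remark~\ref{remark:uniqueness}.

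For the necessity direction I would start from an arbitrary dual optimizer $(\varphi,\psi)$ of $\DOT(\mu,\nu,c)$. Theorem~\ref{thm:Components}-(iv) yields that $(\varphi|_{X_n},\psi|_{Y_n})$ is optimal for $\DOT_{X_n,Y_n}(\mu,\nu,c)$ for each $n$, and the uniqueness recalled above supplies shifts $\beta_n$ with $\varphi|_{X_n}=\varphi_n+\beta_n$ and $\psi|_{Y_n}=\psi_n-\beta_n$; this gives (i) and (ii) after the substitution. To derive (iii) I would invoke dual feasibility of $(\varphi,\psi)$ between distinct components: for $x\in X_n$, $y\in Y_m$ with $n\neq m$, the constraint $\varphi(x)+\psi(y)\le c(x,y)$ rewrites as $\beta_n-\beta_m\le c(x,y)-\varphi_n(x)-\psi_m(y)$. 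Minimizing the right-hand side over $(x,y)\in X_n\times Y_m$, and then swapping the roles of $n$ and $m$ to obtain the matching lower bound, produces exactly the two-sided inequality of (iii).

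Sufficiency is the reverse construction. Given $\alpha_1,\ldots,\alpha_N$ satisfying (i)--(iii), define $(\varphi,\psi)$ via (i), (ii). Dual feasibility splits into two cases: within a component $V_n$ the shifts cancel and $\varphi(x)+\psi(y)=\varphi_n(x)+\psi_n(y)\le c(x,y)$ is inherited from feasibility of $(\varphi_n,\psi_n)$, while between distinct components it is exactly condition (iii). For optimality, rather than checking equality on the full edge set $E$ via Proposition~\ref{prop:Gcharacterizes}, I would use Theorem~\ref{thm.510}-(iv) and establish complementarity of $(\varphi,\psi)$ with the specific primal optimizer $\gamma$: any pair $(x,y)$ with $\gamma(x,y)>0$ is an edge of $G^\gamma$ and therefore lies in a single component $V_n$, so $\varphi(x)+\psi(y)$ collapses to $\varphi_n(x)+\psi_n(y)$; this equals $c(x,y)$ because the normalized restriction of $\gamma$ to $X_n\times Y_n$ is a primal optimizer of $\OT_{X_n,Y_n}(\mu,\nu,c)$ by Theorem~\ref{thm:Components}-(ii), with $(\varphi_n,\psi_n)$ as its complementary dual.

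The main obstacle I anticipate lies in the optimality step of sufficiency. Because $G^\gamma$ can be a strict subgraph of $G$ (as in Example~\ref{ex1}, where $G^\gamma$ already has two components while $G$ is connected), the set $E$ may contain edges crossing the $V_n$'s, and on such cross-component edges (iii) only gives an inequality rather than the equality required by Proposition~\ref{prop:Gcharacterizes}. Routing the optimality argument through $\gamma$ and Theorem~\ref{thm.510}-(iv) bypasses this cleanly, since $\gamma$ itself assigns zero mass to any cross-component pair. A smaller bookkeeping point is to carry the reparametrization $\beta_n\leftrightarrow\alpha_n$ consistently, so that the asymmetric appearance of $\alpha_1$ in (i)--(iii) faithfully records the single degree of translation freedom.
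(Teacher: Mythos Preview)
Your proposal is correct and follows essentially the same route as the paper: necessity via Theorem~\ref{thm:Components}-(iv) plus uniqueness on each component to obtain the shifts, then cross-component dual feasibility for (iii); sufficiency via the within/between split for feasibility, followed by complementarity with the given primal optimizer $\gamma$ (using Theorem~\ref{thm:Components}-(i),(ii)) rather than Proposition~\ref{prop:Gcharacterizes}. Your $\beta_n\leftrightarrow\alpha_n$ bookkeeping and your diagnosis of why routing through $\gamma$ avoids the $G^\gamma\subsetneq G$ issue match the paper exactly.
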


\begin{proof}
We start by showing that, for any dual optimizer $(\varphi, \psi)$, constants $\alpha_1, \alpha_2, \ldots, \alpha_N$ satisfying (i)-(iii) exist. By Theorem~\ref{thm:Components}-(iii) we know that, for every $n\in\{1,\ldots,N\}$, $\varphi|_{X_n}$ and $\psi|_{Y_n}$ have to be optimizers of $\DOT_{X_n,Y_n}(\mu, \nu,c)$. Since the optimizers for these problems are unique up to translation, there is exactly one set of constants $\alpha_1, \alpha_2,\ldots, \alpha_N$ such that the first two conditions are satisfied. Hence, it remains to prove that the constants $\alpha_1, \alpha_2, \ldots, \alpha_N$ given in this way satisfy (iii). For this, fix any $n,m \in \{1, \ldots, N\}$ with $n\neq m$.  Let us first consider any $x \in X_n$ and $y \in Y_m$. Since $(\varphi, \psi)$ is feasible and $(\varphi, \psi)$ satisfies (i) and (ii), then
\begin{align*}
c(x,y) &\ge \varphi(x) + \psi(y) = \varphi_n(x) + \alpha_n + \psi_m(y) - \alpha_m .
\end{align*} Hence, $\alpha_n - \alpha_m \le  c(x,y) - \varphi_n(x) - \psi_m(y)$, which proves the second inequality in (iii).

Similarly, let us now consider $x \in X_m$ and $y \in Y_n$. Then 
\begin{align*}
c(x,y) &\ge \varphi(x) + \psi(y) = \varphi_m(x) + \alpha_m + \psi_n(y) - \alpha_n,
\end{align*} 
thus $\alpha_n - \alpha_m \ge - c(x,y) + \varphi_m(x) + \psi_n(y)$, which proves the first inequality in (iii).

We are left to show the converse implication, that is, that given constants $\alpha_1, \ldots, \alpha_N$ satisfying (iii), the pair $(\varphi,\psi)$ defined via (i) and (ii) is a dual optimizer. 
For this we first prove that $(\varphi, \psi)$ is feasible, and then that it satisfies $c(x,y) = \varphi(x) + \psi(y)$ $\gamma-$a.s.. This means that $\gamma$ and $(\varphi,\psi)$ are complementary, which by Theorem~\ref{thm.510} implies optimality of $(\varphi,\psi)$.

To show that $(\varphi, \psi)$ is feasible, we first note that for $n \in \{1, \ldots, N\}$, $x \in X_n$ and $y \in Y_n$ we have
\[
\varphi(x) + \psi(y) =  \varphi_n(x)+\psi_n(y) \le c(x,y),
\] 
since $(\varphi_n,\psi_n)$ is an optimizer for $\DOT_{X_n,Y_n}(\mu, \nu, c)$. 
Now let  $n,m \in \{1, \ldots, N\}$ with $n \neq m$, and consider any  $x \in X_n$ and $y \in Y_m$. By choice of $\alpha_n$ and $\alpha_m$ we have
\[
\alpha_n - \alpha_m \le c(x,y) -\varphi_n (x) - \psi_m(y).
\]
Hence,
\begin{align*}
\varphi(x) + \psi(y) &= \varphi_n(x)  + \alpha_n + \psi_m(y)  -\alpha_m   \\
&\le \varphi_n(x) + \psi_m(y) + c(x,y) - \varphi_n(x) - \psi_m(y) = c(x,y).
\end{align*}
Thus, all in all we proved that $(\varphi, \psi)$ is feasible. It remains to prove that $(\varphi, \psi)$ is complementary to $\gamma$. By Theorem~\ref{thm:Components}-(i) we have that $\gamma(X_n \times Y_m)=0$ for all $n,m \in \{1, \ldots, N\}$ with $n \neq m$. Hence, we only have to check that 
\[
\gamma(x,y)>0 \Rightarrow c(x,y) = \varphi(x)+\psi(y) \quad \text{for all } n \in \{1, \ldots, N\}, x \in X_n, y \in Y_n.
\] 
By definition of $\varphi$ and $\psi$, this is equivalent to showing that
\[
\gamma(x,y)>0 \Rightarrow c(x,y) = \varphi_n(x)+\psi_n(y) \quad \text{for all } n \in \{1, \ldots, N\}, x \in X_n, y \in Y_n.
\] 
This follows from the fact that $\frac{1}{\mu(X_n)} \gamma|_{X_n \times Y_n}$ is optimal for $\OT_{X_n,Y_n}(\mu, \nu, c)$ by Theorem~\ref{thm:Components}-(ii),
and $(\varphi_n, \psi_n)$ is optimal for $\DOT_{X_n, Y_n} (\mu, \nu, c)$ by assumption, thus they are complementary, which yields
\[
\gamma(x,y)>0 \Leftrightarrow \frac{1}{\mu(X_n)} \gamma|_{X_n \times Y_n} (x,y)>0\Rightarrow c(x,y) = \varphi_n(x)+\psi_n(y) \;\, \text{for all } n \in \{1, \ldots, N\}, x \in X_n, y \in Y_n.
\]
\end{proof}

\begin{example}
\label{ex2}
Consider $\Xcal = \{1,2,3,4\}$ and $\Ycal = \{1,2,3,4,5\}$ and the following measures: 
\[
\mu = \tfrac{3}{20} \delta_{\{1\}} + \tfrac{3}{20} \delta_{\{2\}} + \tfrac{1}{5} \delta_{\{3\}} + \tfrac{1}{2} \delta_{\{4\}} \quad \text{and} \quad \nu = \tfrac{1}{5} \delta_{\{1\}} + \tfrac{1}{10} \delta_{\{2\}} + \tfrac{1}{5} \delta_{\{3\}} + \tfrac{1}{4} \delta_{\{4\}} + \tfrac{1}{4} \delta_{\{5\}}.
\]
Set 
$F_1 =\{ (2,1), (3,3), (3,4)\},\,
    F_2 =\{ (1,1), (2,2), (4,3), (4,4), (4,5)\},\,
    F_3 = \{(1,2)\}$,
and define
\[
c(x,y) = 1_{F_2}(x,y)+3\cdot 1_{F_3}(x,y)+2\cdot 1_{(F_1\cup F_2\cup F_3)^c}(x,y).
\]
Then one finds that 
\[
\gamma= \tfrac{3}{20}\cdot 1_{\{(1,1)\}}+
\tfrac{1}{20}\cdot 1_{\{(2,1)\}}+
\tfrac{1}{10}\cdot 1_{\{(2,2)\}}+
\tfrac{1}{5}\cdot 1_{\{(3,3)\}}+
\tfrac{1}{4}\cdot 1_{\{(4,4), (4,5)\}}
\]
is a primal optimizer for $\OT(\mu, \nu, c)$. The associated graph $G^\gamma$ is depicted in Figure~\ref{fig:ex2} and has three components with 
$X_1 = \{1,2\}, Y_1 = \{1,2\}, X_2 = \{3\}, Y_2 = \{3\}, X_3= \{4\}, Y_3 = \{4,5\}$.
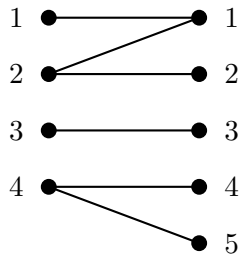
\begin{figure}[h]
\centering
{\begin{tikzpicture}
\clip (-0.75,0) rectangle (2.75, 3.75);

\coordinate[label={[label distance=0.2cm]left:$4$}] (L4) at (0,1.25);
\fill (L4) circle (3pt);
\coordinate[label={[label distance=0.2cm]left:$3$}] (L3) at (0,2);
\fill (L3) circle (3pt);
\coordinate[label={[label distance=0.2cm]left:$2$}] (L2) at (0,2.75);
\fill (L2) circle (3pt);
\coordinate[label={[label distance=0.2cm]left:$1$}] (L1) at (0,3.5);
\fill (L1) circle (3pt);
\coordinate[label={[label distance=0.2cm]right:$5$}] (R5) at (2,0.5);
\fill (R5) circle (3pt);
\coordinate[label={[label distance=0.2cm]right:$4$}] (R4) at (2,1.25);
\fill (R4) circle (3pt);
\coordinate[label={[label distance=0.2cm]right:$3$}] (R3) at (2,2);
\fill (R3) circle (3pt);
\coordinate[label={[label distance=0.2cm]right:$2$}] (R2) at (2,2.75);
\fill (R2) circle (3pt);
\coordinate[label={[label distance=0.2cm]right:$1$}] (R1) at (2,3.5);
\fill (R1) circle (3pt);
\draw[thick] (L1) -- (R1) -- (L2) -- (R2);
\draw[thick] (R3) -- (L3);	
\draw[thick] (R4) -- (L4) -- (R5);

\end{tikzpicture}}

\caption{The graph $G^\gamma$ from Example~\ref{ex2} and Example~\ref{ex3} 
}
\label{fig:ex2}
\end{figure}
The dual optimizers for the connected components can be immediately computed as explained after Proposition~\ref{prop:unique}, and they read as
\begin{align*}
&\varphi_1(1)=0, \, \varphi_1(2)= -1, \, \psi_1(1)=1, \, \psi_1(2)=2 \\
&\varphi_2(3)=0, \, \psi_2(3)=0 \\
&\varphi_3(4)=0, \, \psi_3(4)=1, \, \psi_3(5)=1.
\end{align*}
Now the constraints \eqref{eq.anm} on $\alpha=(\alpha_1, \alpha_2, \alpha_3)$ are
\[
0 \le \alpha_1 - \alpha_2 \le 2, \; 0 \le  \alpha_1 - \alpha_3 \le 1 \text{ and } -1 \le \alpha_2 - \alpha_3 \le -1.
\] 
Hence, we obtain that any suitable $\alpha$ has to satisfy $\alpha_1 \in \mathbb{R}$, $\alpha_2 \in [\alpha_1 - 2, \alpha_1]$ and $\alpha_3 = \alpha_2 + 1$. By Theorem~\ref{thm:SetOfAllDuals}, these choices describe all dual optimizers. \hfill$\diamond$
\end{example}

Given this result, we can formulate the announced statement regarding the relation of $E$  and the set $\{[x,y]: x\in\Xcal, y\in\Ycal, c(x,y) = \varphi(x) + \psi(y)\}$ for not necessarily connected graphs.

\begin{corollary}
\label{cor:relationEandPhi}
Let $G$ be the graph associated to $\OT(\mu,\nu,c)$ and $V_1, \ldots, V_N$ its connected components, 
with $V_n = X_n \cup Y_n$ for subsets $X_n \subseteq \Xcal$ and $Y_n\subseteq \Ycal$ for  $n \in \{1, \ldots, N\}$. Let $(\varphi_n,\psi_n)$ be the unique optimizer for $\DOT_{X_n,Y_n}(\mu, \nu, c)$ for $n \in \{1, \ldots, N\}$. Moreover, let $(\varphi, \psi)$ be an optimizer for $\DOT(\mu,\nu,c)$, and $\alpha =(\alpha_1, \ldots, \alpha_N)$ be the constants satisfying condition (i)-(iii) of Theorem~\ref{thm:SetOfAllDuals}. Then
\[
E = \{[x,y]: x\in\Xcal, y\in\Ycal, c(x,y) = \varphi(x) + \psi(y)\}
\] 
if and only if, for all $n, m \in \{1, \ldots, N\}$ with $n \neq m$, \eqref{eq.anm} holds with strict inequalities.
\end{corollary}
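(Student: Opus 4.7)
The starting point is the free inclusion $E\subseteq\{[x,y]:c(x,y)=\varphi(x)+\psi(y)\}$ given by Proposition~\ref{prop:Gcharacterizes}, so the task reduces to identifying exactly when the reverse inclusion holds, i.e.\ when no pair outside $E$ realizes equality in the feasibility constraint. The natural strategy is to split the pairs $(x,y)\in\Xcal\times\Ycal$ according to whether they lie in a single component $V_n$ of $G$ or cross two distinct components, and to handle each case by applying the machinery already developed for connected graphs (Proposition~\ref{prop:ChracterizationGconnected}) to the appropriate subproblems.

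To set up the within-component step I fix a primal optimizer $\gamma^{\ast}$ with $G^{\gamma^{\ast}}=G$ (Lemma~\ref{lemma:GvsGgamma}), so that Theorem~\ref{thm:SetOfAllDuals} applies with exactly the components of $G$. For $(x,y)\in X_n\times Y_n$ items (i)-(ii) make the translation constants cancel, giving $\varphi(x)+\psi(y)=\varphi_n(x)+\psi_n(y)$. Combining Corollary~\ref{cor:Components}(ii) (which identifies primal optimizers of $\OT_{X_n,Y_n}(\mu,\nu,c)$ with suitably normalized restrictions of primal optimizers of the full problem) with the fact that $V_n$ is a component of $G$, the graph associated to the subproblem $\OT_{X_n,Y_n}(\mu,\nu,c)$ coincides with the connected graph $G[V_n]$. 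Proposition~\ref{prop:ChracterizationGconnected} applied to this subproblem then yields
\[
E\cap(X_n\times Y_n)=\{[x,y]\in X_n\times Y_n: c(x,y)=\varphi_n(x)+\psi_n(y)\},
\]
so within every component $E$ and $\{c=\varphi+\psi\}$ already agree. Therefore the overall equality in the corollary is equivalent to: for every $n\neq m$ and every $(x,y)\in X_n\times Y_m$, $c(x,y)>\varphi(x)+\psi(y)$.

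Finally I translate this cross-component condition into strict inequalities in \eqref{eq.anm}. For $x\in X_n$, $y\in Y_m$ with $n\neq m$, items (i)-(ii) yield $\varphi(x)+\psi(y)=\varphi_n(x)+\psi_m(y)+(\alpha_n-\alpha_m)$, hence $c(x,y)>\varphi(x)+\psi(y)$ for all such pairs is equivalent to
\[
\alpha_n-\alpha_m<\min_{x\in X_n,\,y\in Y_m}\bigl(c(x,y)-\varphi_n(x)-\psi_m(y)\bigr),
\]
which is precisely the strict upper bound in \eqref{eq.anm}. Swapping the roles of $n$ and $m$ turns the strict lower bound into the analogous statement over $X_m\times Y_n$, and taking the conjunction over all ordered pairs $n\neq m$ closes the equivalence.

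\textbf{Main obstacle.} The only nontrivial point is the component-wise identification in the second paragraph: verifying that the graph of the subproblem $\OT_{X_n,Y_n}(\mu,\nu,c)$ is exactly $G[V_n]$, so that Proposition~\ref{prop:ChracterizationGconnected} is available on each component. This requires exploiting the bijective correspondence between primal optimizers of the full problem and of its subproblems supplied by Corollary~\ref{cor:Components}; once this is in place, the rest is a bookkeeping argument using (i)-(ii) to compare $\varphi(x)+\psi(y)$ with $c(x,y)$ in each regime.
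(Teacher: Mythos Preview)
Your proposal is correct and follows essentially the same route as the paper: both arguments identify $G[V_n]$ with the graph of the subproblem via Corollary~\ref{cor:Components}(ii), invoke Proposition~\ref{prop:ChracterizationGconnected} on each component to settle the within-component case, and then reduce the cross-component case to the algebraic identity $\varphi(x)+\psi(y)=\varphi_n(x)+\psi_m(y)+(\alpha_n-\alpha_m)$, from which strictness in \eqref{eq.anm} is read off directly. Your explicit choice of $\gamma^\ast$ with $G^{\gamma^\ast}=G$ to align the components in Theorem~\ref{thm:SetOfAllDuals} with those of $G$ is a point the paper leaves implicit, but otherwise the two proofs are the same.
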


\begin{proof}
By Corollary~\ref{cor:Components}-(ii), the graph $G[X_n \cup Y_n]$ is the graph associated to the subproblem $\OT_{X_n, Y_n}(\mu, \nu, c)$.
Moreover, $G[X_n \cup Y_n]$  is connected and $(\varphi_n, \psi_n )$ is the dual optimizer for $\OT_{X_n, Y_n}(\mu, \nu, c)$.
Hence, by Proposition~\ref{prop:ChracterizationGconnected}, we obtain for $x \in X_n$ and $y \in Y_n$ that $(x,y) \in E$ if and only if 
\[
c(x,y) = \varphi_n(x) + \psi_n(y) = \varphi_n(x) + \alpha_n + \psi_n(y) - \alpha_n = \varphi(x) + \psi(y).
\] 
Now let $n, m \in \{1, \ldots, N\}$ with $n \neq m$. If $\alpha_n-\alpha_m$ satisfies \eqref{eq.anm} with strict inequalities, then for any $x \in X_n$ and $y \in Y_m$ we have that $(x,y) \notin E$ by Corollary~\ref{cor:Components}-(i), and moreover
\[
\varphi(x) + \psi(y) = \varphi_n(x) + \alpha_n + \psi_m(y) - \alpha_m < \varphi_n(x) + \psi_m(y) + c(x,y) - \varphi_n(x) - \psi_m(y) = c(x,y).
\]  
Analogously, we obtain for $x \in X_m$ and $y \in Y_n$ that $(x,y) \notin E$ and $c(x,y) > \varphi(x) + \psi(y)$. Hence, for any dual optimizer with $\alpha$ satisfying \eqref{eq.anm} with strict inequalities for all $n,m \in \{1, \ldots, N\}$ with $n \neq m$, we have that $(x,y) \in E$ if and only if $c(x,y) = \varphi(x)+\psi(y)$.

Now assume that, for some $n,m \in \{1, \ldots, N\}$ with $n \neq m$, $\alpha_n-\alpha_m$ equals one of the extremes in \eqref{eq.anm}. Assume first that $\alpha_n - \alpha_m = - \min_{x \in X_m, y \in Y_n} c(x,y) -\varphi_m(x)- \psi_n(y)$ and let $x_m \in X_m$ and $y_n \in Y_n$ be such that $\alpha_n - \alpha_m = - (c(x_m,y_n) -\varphi_m(x_m)- \psi_n(y_n))$. Then, again by Corollary~\ref{cor:Components}-(i), we have that $(x_m, y_n) \notin E$, and at the same time
\[
c(x_m,y_n) =  \varphi_m(x_m) + \alpha_m + \psi_n(y_n) - \alpha_n = \varphi(x_m) + \psi(y_n).
\] 
Analogously for the other extreme. This concludes the proof. 
\end{proof}

We know that whenever the interval
\[
\left[ - \min_{x \in X_m, y \in Y_n} c(x,y) -\varphi_m(x)- \psi_n(y), \min_{x \in X_n, y \in Y_m} c(x,y) - \varphi_n(x) -\psi_m(y) \right]
\] 
consists of more than one value, then there are multiple dual optimizers and hence $G[X_n \cup Y_n \cup X_m \cup Y_m]$ cannot be connected, by Proposition~\ref{prop:unique}-(ii). In the next lemma we show that if the interval consists of exactly one point, then $G[X_n \cup Y_n \cup X_m \cup Y_m]$ is indeed connected.

\begin{prop}
\label{prop:MaximalComponents}
In the setting of Theorem~\ref{thm:SetOfAllDuals}, if for some $n,m \in \{1, \ldots, N\}$ with $n\neq m$ we have
\begin{equation}\label{eq.mm}
\min_{x \in X_n, y \in Y_m} c(x,y) - \varphi_n(x) -\psi_m(y) = - \min_{x \in X_m, y \in Y_n} c(x,y) -\varphi_m(x)- \psi_n(y) 
\end{equation}
then $G[X_n\cup Y_n \cup X_m \cup Y_m]$ is connected.
\end{prop}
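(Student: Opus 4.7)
My plan is to reduce connectivity of $G[X_n\cup Y_n\cup X_m\cup Y_m]$ to exhibiting cross-edges between $V_n$ and $V_m$ in $G$, and then to produce such edges by a mass-shift argument in the spirit of the proof of Proposition~\ref{prop:ChracterizationGconnected}. Each of $G^\gamma[V_n]$ and $G^\gamma[V_m]$ is connected (being a component of $G^\gamma$) and is a subgraph of $G[V_n]$ resp.\ $G[V_m]$, so $G[V_n]$ and $G[V_m]$ are already connected; hence it suffices to exhibit a single edge $[x,y]\in E$ with one endpoint in $V_n$ and one in $V_m$. Fix minimizers $(x^*,y^*)\in X_n\times Y_m$ and $(x^{**},y^{**})\in X_m\times Y_n$ of the two sides of \eqref{eq.mm}; rearranging \eqref{eq.mm} yields the key identity
\[
c(x^*,y^*)+c(x^{**},y^{**}) \;=\; \varphi_n(x^*)+\psi_m(y^*)+\varphi_m(x^{**})+\psi_n(y^{**}).
\]
The goal then becomes to construct a primal optimizer $\hat\gamma$ with $\hat\gamma(x^*,y^*)>0$ (and simultaneously $\hat\gamma(x^{**},y^{**})>0$), so that both cross-edges lie in $E$ by Remark~\ref{rem.supp}.

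Since $G^\gamma[V_m]$ is connected and bipartite, I pick a path $y^* x_1 y_1\ldots x_L$ in it with $x_L=x^{**}$; likewise, in $G^\gamma[V_n]$ I pick a path $y^{**} x_1' y_1'\ldots x_K'$ with $x_K'=x^*$. Joining these two paths by the cross-edges $[x^*,y^*]$ and $[x^{**},y^{**}]$ produces a cycle $C$ of length $2L+2K$ in the bipartite structure. For a sufficiently small $\delta>0$, define $\hat\gamma$ by adding $+\delta$ on the two cross-edges and alternating $-\delta,+\delta$ along each path so that the cross-edges receive $+\delta$ (the even cycle length makes the signs consistent). Taking $\delta$ no larger than the minimum $\gamma$-mass over the negatively-signed path edges---all of which lie in $E^\gamma$ and hence carry positive $\gamma$-mass---guarantees $\hat\gamma\geq 0$; the alternation forces the marginal contributions to cancel at every vertex of $C$, so $\hat\gamma\in\Pi(\mu,\nu)$.

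The heart of the argument, and the main technical obstacle, is to show that $\hat\gamma$ has the same cost as $\gamma$ so that it is itself a primal optimizer. The cost difference equals $\delta\cdot S$, where $S$ is the signed sum of $c$ around $C$. Because the edges of the path in $V_m$ lie in the support of $\gamma|_{X_m\times Y_m}$, which by Theorem~\ref{thm:Components}-(ii) is optimal for $\OT_{X_m,Y_m}(\mu,\nu,c)$, complementarity with the unique dual $(\varphi_m,\psi_m)$ gives $c=\varphi_m+\psi_m$ on each such edge. Substituting these expressions into the alternating sum along that path, the interior $\varphi_m(x_i)$ and $\psi_m(y_i)$ terms telescope, leaving exactly $-\varphi_m(x^{**})-\psi_m(y^*)$; the analogous computation on the path in $V_n$ contributes $-\varphi_n(x^*)-\psi_n(y^{**})$. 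Adding the cross-edge terms $c(x^*,y^*)+c(x^{**},y^{**})$, the identity displayed above forces $S=0$. Hence $\hat\gamma$ is a primal optimizer with $\hat\gamma(x^*,y^*),\hat\gamma(x^{**},y^{**})>0$, so both cross-edges lie in $E$, which yields the desired connectivity.
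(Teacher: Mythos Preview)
Your proof is correct and follows essentially the same approach as the paper: both construct a new primal optimizer $\hat\gamma$ by shifting mass $\pm\delta$ along the cycle formed by the two cross-edges and the two connecting paths inside the components, and then conclude that the cross-edges lie in $E$. The only minor difference is in verifying that $\hat\gamma$ is optimal: the paper picks a \emph{global} dual optimizer $(\varphi,\psi)$, observes that the forced value $\alpha_n-\alpha_m=\beta$ makes the two cross-edges satisfy $c=\varphi+\psi$, and invokes complementarity (Theorem~\ref{thm.510}-(iv)); you instead compute the cost change directly via the telescoping identity with the local duals $(\varphi_n,\psi_n),(\varphi_m,\psi_m)$ and the rearranged version of \eqref{eq.mm}, which amounts to the same computation without naming the global dual.
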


\begin{proof}
Let $n,m \in \{1, \ldots, N\}$, $n\neq m$ be such that the equality in \eqref{eq.mm} holds, and call $\beta$ the common value of the LHS and RHS of \eqref{eq.mm}.
Let $(\phi, \psi)$ be a dual optimizer. Then by Theorem~\ref{thm:SetOfAllDuals} there are constants $\alpha_1, \alpha_2, \ldots, \alpha_N$ such that conditions (i) - (iii) of Theorem~\ref{thm:SetOfAllDuals} are met. In particular, $\alpha_n - \alpha_m = \beta$. Since $(\varphi, \psi)$
is, as a dual optimizer, feasible, we have
\begin{align*}
0 &\le \min_{x \in X_n, y \in Y_m} c(x,y) - \varphi(x) - \psi(y) = \min_{x \in X_n, y \in Y_m} c(x,y) - \varphi_n(x) - \alpha_n  - \psi_m(y) + \alpha_m\\
&= \min_{x \in X_n, y \in Y_m} c(x,y) - \varphi_n(x) - \psi_m(y) - \beta = 0. 
\end{align*} 
Together with \eqref{eq.mm}, this implies that there are $x_n \in X_n$, $x_m \in X_m$, $y_n \in Y_n$ and $x_m \in X_m$ such that 
\begin{align}
\label{eq:Comp_Binding}
c(x_n,y_m) = \varphi(x_n) + \psi(y_m) \quad \text{and} \quad c(x_m,y_n) = \varphi(x_m) + \psi(y_n).
\end{align} 

Let $i \in \{n,m\}$. Since $G[X_i \cup Y_i]$ is connected and bipartite, there is a path $x_i^{(0)}y_i^{(0)}x_i^{(1)}\ldots x_i^{(l_i)} y_i^{(l_i)}$ from $x_i^{(0)}= x_i$ to $y_i^{(l_n)} = y_i$ in $X_i \cup Y_i$,  with $x_i^{(j)} \in X_i$ and $y_i^{(j)} \in Y_i$ for all $j \in \{0, \ldots, l_i\}$. 
Note that there is no vertex that appears in both paths since $X_n \cup Y_n$ and $X_m \cup Y_m$ are disjoint.

Now set 
\begin{equation*}
\delta =  \min_{i\in\{n,m\}, j \in \{0, \ldots, l_i\}} \gamma( x_i^{(j)}, y_i^{(j)}) \wedge \min_{i\in\{n,m\}, j \in \{1, \ldots, l_{i}\}} (1- \gamma(x_i^{(j)}, y_i^{(j-1)})),
\end{equation*} 
and define $\hat{\gamma}: \Xcal \times \Ycal \rightarrow [0,1]$ by
\[
\hat{\gamma}(x,y) =
\begin{cases}
    \gamma(x,y) - \delta &\text{if } (x,y) = (x_i^{(j)}, y_i^{(j)}) \text{ for some } i\in\{n,m\}, j \in \{0, \ldots, l_i\}  \\
    \gamma(x,y) + \delta &\text{if } (x,y) = (x_i^{(j)}, y_i^{(j-1)}) \text{ for some } i\in\{n,m\}, j \in \{1, \ldots, l_i\}  \\
    \gamma(x,y) + \delta &\text{if } (x,y) \in \{(x_n, y_m), (x_m, y_n)\} \\
    \gamma(x,y) &\text{else.}
\end{cases}
\] 
The construction of $\hat{\gamma}$ is illustrated in Figure~\ref{fig:proofIllustration}.

\begin{figure}[h]
\centering
\begin{tikzpicture}
\coordinate[label={[label distance=0.2cm]left:$x_n^{(0)}$}] (AL1) at (0,2);
\fill (AL1) circle (3pt);
\coordinate[label={[label distance=0.2cm]right:$y_n^{(l_n)}$}] (AR1) at (3,2);
\fill (AR1) circle (3pt);
\coordinate[label={[label distance=0.2cm]left:$y_n^{(0)}$}] (AL2) at (0.25,2.75);
\fill (AL2) circle (3pt);
\coordinate[label={[label distance=0.2cm]right:$x_n^{(l_n)}$}] (AR2) at (2.75,2.75);
\fill (AR2) circle (3pt);
\coordinate[label={[label distance=0.2cm]left:$x_n^{(1)}$}] (AL3) at (0.75,3.5);
\fill (AL3) circle (3pt);
\coordinate[label={[label distance=0.2cm]right:$y_n^{(l_n-1)}$}] (AR3) at (2.25,3.5);
\fill (AR3) circle (3pt);
\coordinate[label={[label distance=0cm]below:$\cdots$}] (CD) at (1.5,3.65);

\draw[rounded corners = 0.5cm] (-1.25, 1.65) -- (4.25, 1.65) -- (4, 3.8) -- (-0.75, 3.8) -- cycle;
\coordinate[label={[label distance=0cm]above:$V_n$}] (CD) at (1.5,3.8);

\draw[thick] (AL1) -- (AL2) node [midway, right] {$- \delta$} ;
\draw[thick, dashed] (AL2) -- (AL3) node [midway, right] {$+ \delta$} ;
\draw[thick] (AR2) -- (AR1) node [midway, left] {$- \delta$} ;
\draw[thick, dashed] (AR3) -- (AR2) node [midway, left] {$+ \delta$} ;

\coordinate[label={[label distance=0.2cm]left:$x_m^{(0)}$}] (BL1) at (0,0.5);
\fill (BL1) circle (3pt);
\coordinate[label={[label distance=0.2cm]right:$y_m^{(l_m)}$}] (BR1) at (3,0.5);
\fill (BR1) circle (3pt);
\coordinate[label={[label distance=0.2cm]left:$y_m^{(0)}$}] (BL2) at (0.25,-0.25);
\fill (BL2) circle (3pt);
\coordinate[label={[label distance=0.2cm]right:$x_m^{(l_m)}$}] (BR2) at (2.75,-0.25);
\fill (BR2) circle (3pt);
\coordinate[label={[label distance=0.2cm]left:$x_m^{(1)}$}] (BL3) at (0.75,-1);
\fill (BL3) circle (3pt);
\coordinate[label={[label distance=0.2cm]right:$y_m^{(l_m-1)}$}] (BR3) at (2.25,-1);
\fill (BR3) circle (3pt);
\coordinate[label={[label distance=0cm]above:$\ldots$}] (CD) at (1.5,-1.15);

\draw[rounded corners = 0.5cm] (-1.25, 0.85) -- (4.25, 0.85) -- (4, -1.45) -- (-0.75, -1.45) -- cycle;
\coordinate[label={[label distance=0cm]below:$V_m$}] (CD) at (1.5,-1.6);

\draw[thick] (BL1) -- (BL2) node [midway, right] {$- \delta$} ;
\draw[thick, dashed] (BL2) -- (BL3) node [midway, right] {$+ \delta$} ;
\draw[thick] (BR1) -- (BR2) node [midway, left] {$- \delta$} ;
\draw[thick, dashed] (BR3) -- (BR2) node [midway, left] {$+ \delta$} ;
\draw[thick, dashed] (BL1) -- (AR1) node [pos=0.37, above left, sloped] {$+ \delta$} ;
\draw[thick, dashed] (BR1) -- (AL1) node [pos=0.37, above right, sloped] {$+ \delta$} ;

\end{tikzpicture}
\caption{The figure illustrates the construction of $\hat{\gamma}$. This shows on which edges $[x,y]$ we set $\hat{\gamma}(x,y) = \gamma(x,y) - \delta$ (solid edges) and on which edges $[x,y]$ we set $\hat{\gamma}(x,y) = \gamma(x,y) - \delta$ (dashed edges).}
\label{fig:proofIllustration}
\end{figure}
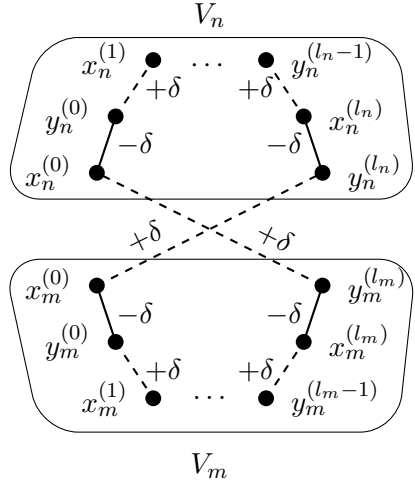

We are going to show that $\hat{\gamma} \in \Pi(\mu,\nu)$. Since $\gamma(x_n,y_m)=\gamma(x_m,y_n)=0$, by definition of $\delta$ we have that $\gamma(x,y) \in [0,1]$ for all $(x,y) \in \Xcal \times \Ycal$. Moreover, for any $x \in \Xcal \setminus \{x_n^{(0)}, \ldots, x_n^{(l_n)}, x_m^{(0)}, x_m^{(l_m)}\}$, we have
\[
\sum_{y \in \Ycal} \hat{\gamma}(x,y) = \sum_{y \in \Ycal} \gamma(x,y) =\mu(x).
\] 
On the other hand, for $i\in\{n,m\}$ and $j \in \{0,\ldots, l_n\}$, we obtain
\begin{align*}
\sum_{y \in \Ycal} \hat{\gamma}(x_i^{(j)},y) &= \sum_{y \in \Ycal \setminus \{y_i^{(j)}, y_i^{(j-1)}\}} \gamma(x_i^{(j)},y) + \gamma(x_i^{(j)},y_i^{(j)}) - \delta + \gamma(x_i^{(j)}, y_i^{(j-1)}) + \delta \\
&= \sum_{y \in \Ycal} \gamma(x_i^{(j)},y) = \mu(x_i^{(j)}),
\end{align*} 
where we used the convention $y_i^{(-1)}=y_i^{(l_i)}$. 
Analogous computations for $y \in \Ycal$ show that $\hat{\gamma} \in \Pi(\mu, \nu).$

Now we prove that $\hat{\gamma}$ and $(\varphi, \psi)$ are complementary.
We start by noticing that for $(x,y) \notin \{(x_n,y_m), (x_m,y_n) \}$ we have 
\[
\hat{\gamma}(x,y) >0 \Rightarrow \gamma(x,y)>0.
\] 
Moreover, for $(x,y) \in \{(x_n, y_m), (x_m,y_n)\}$ we have \eqref{eq:Comp_Binding}. Hence, the complementarity of $\gamma$ and $(\varphi, \psi)$ implies that $\hat{\gamma}$ and $(\varphi, \psi)$ are complementary. By Theorem~\ref{thm.510}-(iv) this implies that the coupling $\hat{\gamma}$ is a primal optimizer. Thus, $[x_n,y_m], [x_m,y_n] \in E$, which yields that $G[X_n \cup Y_n \cup X_m \cup Y_m]$ is connected, as wanted.
\end{proof}

\begin{continueexample}{ex2}
We have seen that, for any admissible $\alpha$, the intervals $\alpha_1 - \alpha_2$ and $\alpha_1 - \alpha_3$ consist of more than one point, which means that $X_1 \cup Y_1$ and $X_2 \cup Y_2$ as well as $X_1 \cup Y_1$ and $X_3 \cup Y_3$ are not part of the same component of $G$. On the other hand, the interval $\alpha_2 - \alpha_3$ consists of exactly one point. Hence, in $G$ there are two components with $\tilde{X}_1 = \{1,2\}$, $\tilde{Y}_1 = \{1,2\}$, $\tilde{X}_2 = \{3,4\}$ and $\tilde{Y}_2 = \{3,4,5\}$.
\hfill$\diamond$
\end{continueexample}

\begin{remark} The results derived so far allow us to compute $G$ given one primal optimizer $\gamma$ and one dual optimizer $(\varphi, \psi)$. Namely, given $G^\gamma$, we first determine the components $V_1, \ldots, V_N$ of $G^\gamma$ and then check for which pairs $n,m \in \{1, \ldots, N\}$ condition \eqref{eq.mm} is satisfied. If this condition is satisfied, by Proposition~\ref{prop:MaximalComponents} we have that $V_n \cup V_m$ is part of one component of $G$. 
If this condition is not satisfied, then as noted before Proposition~\ref{prop:MaximalComponents} we have that $V_n$ and $V_m$ cannot be part of the same component. Hence, by checking the condition for all $n,m \in \{1, \ldots, N\}$, we obtain the connected components $\Tilde{V}_1, \ldots, \tilde{V}_M$ of $G$.
Then we construct a subgraph $G' \subseteq G$ with the same components as $G$ in the following way. For each pair $(n,m)$ such that  \eqref{eq.mm} is satisfied, we find pairs $(x_n,y_m)\in X_n\times Y_m$ and $(x_m,y_n)\in X_m \times Y_n$ that attain the minima in \eqref{eq.mm}. As argued in the proof of Proposition~\ref{prop:MaximalComponents}, we have that $(x_n,y_m), (x_m,y_n) \in E$. Now let $G'$ be the graph obtained by adding to $G^\gamma$ the edges $(x_n,y_m), (x_m,y_n)$ for any $(n,m)$ such that \eqref{eq.mm} holds. By construction, this graph has the same connected components as $G$. Finally, we compute the edges of $G$. By Proposition~\ref{prop:unique}-(ii), the dual optimizer of each component $\tilde{V}_n=\Tilde{X}_n\cup\Tilde{Y}_n$ is unique, therefore $(\varphi, \psi)$ restricted to $\Tilde{V}_n$ is the unique optimizer. Hence, by Proposition~\ref{prop:ChracterizationGconnected}, all edges of $G$ in this component are given by those edges $(x,y) \in \Tilde{X}_n \times \Tilde{Y}_n$ for which $c(x,y)=\varphi(x) + \psi(y)$. That these are all edges of $G$ finally follows from Corollary~\ref{cor:relationEandPhi}.
\hfill$\diamond$
\end{remark}

Finally, we can now show that the condition that $G$ is connected is also necessary for the uniqueness (up to translation) of the dual optimizer.

\begin{corollary}
    \label{cor:UniqueMeansConnected}
    The dual optimizer is unique if and only if the graph $G$ is connected.
\end{corollary}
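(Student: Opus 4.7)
The ``if'' direction is exactly Proposition~\ref{prop:unique}-(ii), so the plan is to prove the converse by contrapositive: assuming $G$ is disconnected, I will exhibit two dual optimizers that are not translates of one another.

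I would first invoke Lemma~\ref{lemma:GvsGgamma} to produce a primal optimizer $\gamma^\ast$ with $G^{\gamma^\ast}=G$, so that $G^{\gamma^\ast}$ has $N\geq 2$ components $V_1,\ldots,V_N$, with $V_n=X_n\cup Y_n$ and every $X_n,Y_n$ non-empty (the latter because $\mu,\nu$ are strictly positive on their supports). By Theorem~\ref{thm:SetOfAllDuals} applied to this $\gamma^\ast$, dual optimizers correspond bijectively to vectors $\alpha\in\RR^N$ satisfying (iii), and inspection of (i)--(ii) shows that two such vectors encode translates of the same pair $(\varphi,\psi)$ exactly when they agree in coordinates $2,\ldots,N$. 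Since any two distinct components $V_n,V_m$ of $G=G^{\gamma^\ast}$ admit no joining edge of $G$, the subgraph $G[V_n\cup V_m]$ is disconnected, and the contrapositive of Proposition~\ref{prop:MaximalComponents} forces strict inequality in \eqref{eq.mm}; hence every interval in \eqref{eq.anm} is non-degenerate.

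To produce a second dual optimizer I start from any dual optimizer $(\varphi,\psi)$ and introduce the one-parameter family
\[
(\varphi_\epsilon,\psi_\epsilon):=\bigl(\varphi+\epsilon\, 1_{X_1},\ \psi-\epsilon\, 1_{Y_1}\bigr),\qquad\epsilon\in\RR,
\]
which shifts values only on $V_1$. Because $\gamma^\ast$ is concentrated on $\bigcup_n X_n\times Y_n$ and the perturbation is constant on each such block, complementarity with $\gamma^\ast$ is preserved for every $\epsilon$. Feasibility of $(\varphi_\epsilon,\psi_\epsilon)$ holds exactly for $\epsilon\in[-A,B]$ with
\[
B=\min_{x\in X_1,\,y\notin Y_1}\bigl(c(x,y)-\varphi(x)-\psi(y)\bigr),\qquad A=\min_{x\notin X_1,\,y\in Y_1}\bigl(c(x,y)-\varphi(x)-\psi(y)\bigr),
\]
both non-negative. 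Any admissible $\epsilon\neq 0$ then yields a dual optimizer whose correction $\epsilon\, 1_{X_1}$ is non-constant on $\Xcal$ (since $X_1\subsetneq\Xcal$), and hence is not a translate of $(\varphi,\psi)$.

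The main obstacle is to guarantee $A+B>0$ for an appropriate choice of dual optimizer and of component $V_1$. I plan to argue by contradiction: if $A=B=0$ for every dual optimizer and every component, then tight equalities $c(x,y)=\varphi(x)+\psi(y)$ must occur in cross-blocks $X_n\times Y_m$ with $n\neq m$; chaining these equalities along the resulting cycle of components and extending the mass-shifting construction in the proof of Proposition~\ref{prop:MaximalComponents} (originally designed for $2$-cycles) to arbitrary $k$-cycles would yield a primal optimizer whose support contains edges joining distinct components of $G$, contradicting the definition of the $V_n$ as the components of $G$. This $k$-cycle extension is the delicate step; once it is established the rest of the argument is routine bookkeeping.
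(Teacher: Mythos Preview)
Your proposal is correct, and its backbone coincides with the paper's: pass to $\gamma^\ast$ with $G^{\gamma^\ast}=G$, invoke Theorem~\ref{thm:SetOfAllDuals}, and bring in Proposition~\ref{prop:MaximalComponents}. The paper then finishes in one line: assuming uniqueness and $N\ge 2$, it asserts that the admissible $\alpha$ being unique forces the $(1,2)$-interval in \eqref{eq.anm} to collapse to a point, whence Proposition~\ref{prop:MaximalComponents} gives $G[V_1\cup V_2]$ connected, a contradiction. For $N=2$ this inference is immediate, since there is only one interval constraint; for $N\ge 3$ it is precisely the subtlety you flag --- a polytope cut out by \emph{non-degenerate} interval constraints on all pairwise differences $\alpha_n-\alpha_m$ can still be a singleton, so the paper's implication is not self-evident there.

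Your one-parameter perturbation isolates the obstruction cleanly, and your proposed $k$-cycle extension of the mass-shifting construction in Proposition~\ref{prop:MaximalComponents} is exactly the right remedy. Concretely: if $A=B=0$ for every choice of distinguished component, then the directed graph on $\{1,\dots,N\}$ with an arc $n\to m$ whenever some pair $(x,y)\in X_n\times Y_m$ satisfies $c(x,y)=\varphi(x)+\psi(y)$ has out-degree at least one at every vertex and therefore contains a directed cycle $n_1\to n_2\to\cdots\to n_l\to n_1$. Chaining the corresponding tight cross-block pairs with bipartite paths inside each connected subgraph $G[V_{n_i}]$ yields a closed alternating walk along which the $\pm\delta$ shift of $\gamma^\ast$ produces a new primal optimizer carrying positive mass on some $(x,y)\in X_{n_i}\times Y_{n_{i+1}}$, contradicting that the $V_n$ are components of $G$. (One should take a little care if the walk revisits vertices, but one can always pass to a simple sub-cycle.) So what you call the ``delicate step'' is both genuinely needed beyond $N=2$ and goes through as you outline; the paper's proof simply does not spell it out. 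Your second paragraph already contains the observation that every interval is non-degenerate, which is the paper's entire argument for $N=2$; your additional work is what makes the case $N\ge 3$ rigorous.
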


\begin{proof}
    That $G$ being connected is sufficient for a unique dual optimizer has been proved in Proposition~\ref{prop:unique}. Hence, it remains to prove that the condition is also necessary for uniqueness.    
    Assume that there is a unique dual optimizer and, by contradiction, that the graph $G$ is not connected, i.e. that $G$ has $N \ge 2$ components. By Theorem~\ref{thm:SetOfAllDuals} we now have that for any $\alpha =(\alpha_1, \ldots, \alpha_N)$ satisfying condition \eqref{eq.anm} there is a dual optimizer. Since, by assumption, the dual optimizer is unique, $\alpha$ has to be uniquely determined, which in particular means that 
    \[
    \alpha_1 - \alpha_2 = \min_{x \in X_1, y \in Y_2} c(x,y) - \varphi_1(x) - \psi_2(y) = -\min_{x \in X_2, y \in Y_1} c(x,y) - \varphi_2(x) - \psi_1(y).
    \] By Proposition~\ref{prop:MaximalComponents} this implies that $G[X_1 \cup Y_1 \cup X_2 \cup Y_2]$ is connected, which is the desired contradiction. 
\end{proof}

\section{Entropic Transport Problems and Asymptotics.}
\label{sec:entropic}
In this section we turn to the entropic regularization of the optimal transport problem and provide the characterization of the limit of the dual entropic optimizers. We start by briefly describing the entropic regularization of the optimal transport problem. For more details in this discrete setting, we refer the reader to Chapter 4 in \cite{ComputationalOT}.

For any $\eps>0$,  the entropic regularization of $\OT(\mu, \nu, c)$ reads as
\begin{equation}\label{eq.eOT}
\OT_\eps(\mu, \nu, c) = \inf_{\gamma \in \Pi(\mu, \nu)} \left\{\textstyle{\int c\ d\gamma + \eps H(\gamma)}\right\},
\end{equation}
where $H(\gamma)$ is the entropy associated to the coupling $\gamma$, and is given by
\[
H(\gamma)=\sum_{(x,y) \in \Xcal \times \Ycal} \gamma(x,y) (\ln (\gamma(x,y)) -1),
\]
with the usual convention $0\log(0)=0$.
The corresponding dual is
\begin{align}\label{eq.DeOT}
\DOT_\eps (\mu, \nu, c) = \sup \Big\{\textstyle{ \int_\Xcal \varphi d\mu + \int_\Ycal\psi d\nu} - \eps \displaystyle{\sum_{(x,y) \in \Xcal \times \Ycal}} \text{e}^{\frac{1}{\eps} (-c(x,y)+\varphi(x)+\psi(y))}
: \varphi:\Xcal\to\RR, \psi:\Ycal\to\RR\Big\}.
\end{align}
As for the classical transport problem, duality $\OT_\eps (\mu, \nu, c)=\DOT_\eps (\mu, \nu, c)$ holds. One main advantage of the regularized problems is that both \eqref{eq.eOT} and \eqref{eq.DeOT} admit unique optimizers, that we call primal and dual entropic optimizers, respectively, and denote them by $\gamma^\eps$ and $(\varphi^\eps, \psi^\eps)$.

\begin{prop}[\cite{ComputationalOT}, Proposition 4.1]
For $\eps \rightarrow 0$, the unique solution $\gamma^\eps$ of $\OT_\eps(\mu, \nu, c)$
converges to a solution of $\OT(\mu, \nu, c)$, and specifically to the one with minimal entropy, namely
\begin{equation}\label{eq.gamma*}
\gamma^\eps\to\gamma^*
:=\argmin \left\{H(\gamma) : \gamma \text{ optimizer of } \eqref{eq.OT} \right\}.
\end{equation}
As a consequence, $\OT_\eps(\mu, \nu, c)\to\OT(\mu, \nu, c)$ for $\eps \rightarrow 0$.
\end{prop}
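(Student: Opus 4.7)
The overall strategy is the standard $\Gamma$-convergence-type argument for strictly convex penalizations on compact feasible sets. The plan is to first extract a convergent subsequence of $(\gamma^\eps)$, identify any limit as an optimizer of the unregularized problem with minimal entropy, and then invoke uniqueness of this entropy minimizer to upgrade subsequential convergence to convergence of the whole family.

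First I would observe that $\Pi(\mu,\nu)$ is a non-empty compact convex subset of the finite-dimensional simplex on $\Xcal\times\Ycal$, and that the entropy $H$ is continuous (since $t\mapsto t(\ln t - 1)$ is continuous on $[0,1]$ with the convention $0\ln 0=0$), hence bounded on $\Pi(\mu,\nu)$; fix $K>0$ with $|H(\gamma)|\le K$ for all $\gamma\in\Pi(\mu,\nu)$. Given any sequence $\eps_k\downarrow 0$, compactness yields a subsequence along which $\gamma^{\eps_k}\to\gamma^\infty\in\Pi(\mu,\nu)$. To show $\gamma^\infty$ is optimal for \eqref{eq.OT}, I would use the minimality of $\gamma^{\eps_k}$ in \eqref{eq.eOT}: for every $\gamma\in\Pi(\mu,\nu)$,
\[
\int c\, d\gamma^{\eps_k} + \eps_k H(\gamma^{\eps_k}) \;\le\; \int c\, d\gamma + \eps_k H(\gamma).
\]
Letting $k\to\infty$, the two entropy terms vanish (both are bounded in absolute value by $\eps_k K$), and by continuity of $\gamma\mapsto\int c\, d\gamma$ one obtains $\int c\, d\gamma^\infty \le \int c\, d\gamma$, so $\gamma^\infty$ is an optimizer of \eqref{eq.OT}.

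The next step is to identify $\gamma^\infty$ as the minimum-entropy optimizer $\gamma^*$ from \eqref{eq.gamma*}. Picking an arbitrary optimizer $\tilde\gamma$ of \eqref{eq.OT}, optimality of $\gamma^{\eps_k}$ again yields
\[
\int c\, d\gamma^{\eps_k} + \eps_k H(\gamma^{\eps_k}) \;\le\; \OT(\mu,\nu,c) + \eps_k H(\tilde\gamma),
\]
and since $\int c\, d\gamma^{\eps_k} \ge \OT(\mu,\nu,c)$ we conclude $H(\gamma^{\eps_k}) \le H(\tilde\gamma)$. Passing to the limit via continuity of $H$ gives $H(\gamma^\infty)\le H(\tilde\gamma)$ for every optimizer $\tilde\gamma$, so $\gamma^\infty$ minimizes $H$ over the convex set of primal optimizers. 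Strict convexity of $t\mapsto t(\ln t - 1)$ on $[0,1]$ makes $H$ strictly convex on $\Pi(\mu,\nu)$, so the entropy minimizer is unique, hence $\gamma^\infty=\gamma^*$. As the subsequence was arbitrary, $\gamma^\eps\to\gamma^*$ for the full family.

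Finally, the value convergence $\OT_\eps(\mu,\nu,c)\to\OT(\mu,\nu,c)$ follows by taking $\eps\to 0$ in $\OT_\eps(\mu,\nu,c)=\int c\, d\gamma^\eps + \eps H(\gamma^\eps)$: the first term converges to $\OT(\mu,\nu,c)$ by continuity and by what was proved above, while $|\eps H(\gamma^\eps)|\le \eps K\to 0$. I expect no substantial obstacles in this proof: the only point requiring a small verification is strict convexity of $H$ (needed for uniqueness of $\gamma^*$), but in the present discrete setting this is immediate from strict convexity of the scalar function $t\mapsto t(\ln t - 1)$, coordinate by coordinate, on $[0,1]$.
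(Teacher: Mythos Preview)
Your argument is correct and is the standard $\Gamma$-convergence/compactness proof for this result. Note, however, that the paper does not supply its own proof of this proposition: it is quoted directly from \cite{ComputationalOT} (Proposition~4.1), and the only additional comment the paper makes is the one-line observation that $\gamma^*$ is unique by strict convexity of the entropy and convexity of the set of primal optimizers --- precisely the point you flag at the end. So there is nothing substantive to compare; your write-up simply fills in the details that the paper outsources to the reference.
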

Note that the minimizer $\gamma^*$ in \eqref{eq.gamma*} is unique because of strict convexity of relative entropy and convexity of the set of primal optimizers.

\begin{remark}
By Remark~\ref{rem.supp}, we obtain the more convenient characterization 
\[
\gamma^*=\argmin \left\{\sum_{[x,y] \in E} \gamma(x,y) (\ln(\gamma(x,y))-1) : \gamma \in \Pi(\mu, \nu) \text{ s.t. supp$(\gamma)\subseteq E$}\right\}.
\]
\hfill $\diamond$
\end{remark}

Clearly, the discrete optimal transport problem $\OT(\mu,\nu,c)$ is a linear programming problem. 
Moreover, by fixing an arbitrary $x_0 \in \Xcal$ and dropping the redundant constraint $\sum_{y \in \Ycal} \gamma(x_0,y)=\mu(x_0)$, we obtain a linear programming problem where the matrix describing the constraints has full rank; see \cite[Remark 3.1]{ComputationalOT}. The dual problem associated to this adjusted linear programming problem is problem $\text{DOT}(\mu, \nu, c)$ with the additional requirement that $\varphi(x_0)=0$. By Theorem~\ref{thm.510} there is a dual optimizer and by Theorem~\ref{thm:SetOfAllDuals} we moreover have that the set of all solutions to this problem is bounded. Hence, the adjusted dual problem satisfies the assumptions requested in
\cite[Proposition~3.2]{CominettiSanMartin}, ensuring the existence of a dual optimizer $(\hat\varphi, \hat \psi)$, namely the centroid of the solution set, such that  any sequence of dual entropic optimizers $(\varphi^{\eps_n}, \psi^{\eps_n})$ with $\epsilon_n \rightarrow 0$ converges to $(\hat\varphi, \hat \psi)$. However, the construction of the centroid in \cite{CominettiSanMartin} requires the determination of the solution set for up to $n_\Xcal \times n_\Ycal$ convex optimization problems. We now propose a simple algorithm to pinpoint $(\hat\varphi, \hat \psi)$, that relies only on elementary computations.

In what follows we denote by $V_1, \ldots, V_N$ the connected components of the graph $G$ associated to $\OT(\mu,\nu,c)$, and we consider the usual decomposition $V_n=X_n \cup Y_n$ for $n \in \{1, \ldots, N\}$. Without loss of generality we assume that $x_0 \in X_1$. Recall that, for each $n$, the dual problem $\DOT_{X_n,Y_n}(\mu,\nu,c)$ admits a solution that is unique up to translation by constants.
We fix one of them and denote it by $(\varphi_n,\psi_n)$. For the first component, we choose this representative so that $\varphi_1(x_0)=0$.

\begin{figure}[ht]
\begin{algorithm}[H]
\DontPrintSemicolon
$F \longleftarrow [\{1, \ldots, N\}]^2$ \;
$E^T \longleftarrow \emptyset$ \;
\ForEach{$(n,m) \in F$}{
$\delta_{n,m} \longleftarrow \frac{1}{2} \left(\min_{x \in X_n, y \in Y_m} c(x,y) - \varphi_n(x) - \psi_m(y) + \min_{x \in X_m, y \in Y_n} c(x,y) - \varphi_m(x)- \psi_n(y)\right)$
}
\While{$F$ non-empty}{

$\delta \longleftarrow \min_{(n,m) \in F} \delta_{n,m}$\;
\For{$(n,m)\in F: \delta_{n,m} = \delta$}{
$E^T \longleftarrow E^T \cup \{(n,m)\}$ \; 
remove $(n',m')$ from $F$ whenever there is a path from $n'$ to $m'$ in $T$\;			
}
}
\caption{Construction of the tree $T$}
\label{Algorithm:ConstructionT}
\end{algorithm}
\caption{By means of simple computations, Algorithm~\ref{Algorithm:ConstructionT} gives a recipe to construct a tree $T$ on the vertex set $\{1, \ldots, N\}$, which is a connected graph with exactly $N-1$ edges.
This tree then allows us to describe the dual optimizer to which any sequence of dual entropic optimizers converges, as described in Theorem~\ref{thm:LimitEntropic}.}
\end{figure}

\begin{theorem}
\label{thm:LimitEntropic}
Let $T=(V^T,E^T)$ be the spanning tree on $V^T=\{1,...,N\}$ given by Algorithm~\ref{Algorithm:ConstructionT}.
Let $\alpha_1=0$ and set $\alpha_2,\ldots,\alpha_N$ to be constants such that, for all $\{n,m\} \in E^T$,
\begin{equation}\label{eq:DualLimitcEqual}
\alpha_n - \alpha_m = L_{n,m}:= \frac{1}{2}  \min_{x \in X_n, y \in Y_m} \big(c(x,y) - \varphi_n(x) - \psi_m(y) \big) - \frac{1}{2} \min_{x \in X_m, y \in Y_n} \big(c(x,y) - \varphi_m(x) - \psi_n(y) \big).
\end{equation}
Define $\varphi^\ast:\Xcal\to\RR$ and $\psi^\ast:\Ycal\to\RR$ by $\varphi^\ast(x) = \varphi_n(x) + \alpha_n$ and $\psi^\ast (y) = \psi_n(y) - \alpha_n$ for all $x \in X_n$ and $y \in Y_n$, $n\in\{1,\ldots,N\}$.
Then any sequence $(\varphi^{\eps_n}, \psi^{\eps_n})$ with $\eps_n \rightarrow 0$ converges to $(\varphi^\ast, \psi^\ast)$.	
\end{theorem}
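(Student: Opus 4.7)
The goal is to identify the limit $(\hat\varphi,\hat\psi):=\lim_n (\varphi^{\epsilon_n},\psi^{\epsilon_n})$---which by \cite[Proposition 3.2]{CominettiSanMartin} exists and is a dual optimizer of $\DOT(\mu,\nu,c)$---with the pair $(\varphi^\ast,\psi^\ast)$ produced by Algorithm~\ref{Algorithm:ConstructionT}. By Theorem~\ref{thm:SetOfAllDuals} applied to a primal optimizer with $G^{\gamma^\ast}=G$ (Lemma~\ref{lemma:GvsGgamma}), one has $\hat\varphi=\varphi_n+\hat\alpha_n$ on $X_n$ and $\hat\psi=\psi_n-\hat\alpha_n$ on $Y_n$ for unique constants $\hat\alpha_1=0,\hat\alpha_2,\ldots,\hat\alpha_N$ satisfying the interval constraints in \eqref{eq.anm}. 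The task thus reduces to proving $\hat\alpha_n-\hat\alpha_m=L_{n,m}$ for every edge $\{n,m\}\in E^T$.

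The key ingredient is an asymptotic analysis of the Sinkhorn first-order conditions for the entropic dual. Setting $\gamma^\epsilon(x,y):=\exp\bigl((\varphi^\epsilon(x)+\psi^\epsilon(y)-c(x,y))/\epsilon\bigr)$, summing $\mu(x)=\sum_y \gamma^\epsilon(x,y)$ over $x\in X_n$ and $\nu(y)=\sum_x \gamma^\epsilon(x,y)$ over $y\in Y_n$, and using $\mu(X_n)=\nu(Y_n)$ together with $\gamma^\epsilon(X_n\times Y_n)\to\mu(X_n)$, one obtains the cross-component balance identity
\[
\sum_{m\neq n}\gamma^\epsilon(X_n\times Y_m)=\sum_{m\neq n}\gamma^\epsilon(X_m\times Y_n),\qquad n=1,\ldots,N.
\]
A Laplace-type expansion shows that $\gamma^\epsilon(X_n\times Y_m)$ and $\gamma^\epsilon(X_m\times Y_n)$ decay at exponential rates $P_{n,m}/\epsilon$ and $Q_{n,m}/\epsilon$, with $P_{n,m}:=a_{n,m}^+-(\hat\alpha_n-\hat\alpha_m)\ge 0$ and $Q_{n,m}:=(\hat\alpha_n-\hat\alpha_m)-a_{n,m}^-\ge 0$, where $a_{n,m}^+, a_{n,m}^-$ denote the right and left endpoints of the interval in \eqref{eq.anm}. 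Since $P_{n,m}+Q_{n,m}=2\delta_{n,m}$ and $P_{m,n}=Q_{n,m}$, $Q_{m,n}=P_{n,m}$, the midpoint condition $\hat\alpha_n-\hat\alpha_m=L_{n,m}$ is equivalent to $P_{n,m}=Q_{n,m}=\delta_{n,m}$, and matching leading exponential rates in the balance identity at node $n$ forces $\min_{m\neq n} P_{n,m}=\min_{m\neq n} Q_{n,m}=:\pi_n$.

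The final step is an induction on the iterations of Algorithm~\ref{Algorithm:ConstructionT}. At the first iteration, from $P_{n,k},Q_{n,k}\ge \pi_n$ and $P_{n,k}+Q_{n,k}=2\delta_{n,k}$ one obtains $\pi_n\le\delta_{n,k}$ for all $k$, hence $\pi_n\le\delta^\ast:=\min_{n',m'}\delta_{n',m'}$ whenever $n$ participates in a minimum-$\delta$ pair. A joint analysis at the endpoints $n$ and $m$ of such a pair, combined with $\pi_n+\pi_m\le 2\min(P_{n,m},Q_{n,m})\le 2\delta^\ast$, forces $\pi_n=\pi_m=\delta^\ast$ and therefore $P_{n,m}=Q_{n,m}=\delta^\ast$, i.e.\ $\hat\alpha_n-\hat\alpha_m=L_{n,m}$, which is exactly the first batch of edges added to $E^T$. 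Once these equalities are pinned, the corresponding components behave as a single merged block, and the redundancy-removal step of the algorithm (discarding pairs already connected in $T$) is precisely what reduces the balance identity to the analogous problem on the remaining blocks at the next level of $\delta$. Iterating exhausts $E^T$, and since $T$ is spanning and $\hat\alpha_1=0$, all $\hat\alpha_n$ are pinned down and coincide with $\alpha^\ast$, yielding $(\hat\varphi,\hat\psi)=(\varphi^\ast,\psi^\ast)$.

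The main difficulty lies in making the Laplace asymptotics rigorous: one needs uniform control on the subleading corrections of $(\varphi^\epsilon,\psi^\epsilon)$ around their limit in order to justify the comparison of exponential rates in the balance identity, and one has to handle simultaneous ties among pairs with the same $\delta_{n,m}$ (batched by the algorithm) as well as the inductive contraction to merged blocks. The within-block reduction is handled by the uniqueness (up to translation) of the sub-problem dual optimizers from Proposition~\ref{prop:unique}(ii), which guarantees that merging does not generate further constraints beyond those captured by successive iterations of Algorithm~\ref{Algorithm:ConstructionT}.
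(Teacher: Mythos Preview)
Your route is genuinely different from the paper's. The paper never touches the Sinkhorn optimality conditions or any Laplace expansion; instead it invokes the Cominetti--San Mart\'in description of the limit as the \emph{centroid} of the dual polytope, namely the unique point in the nested sequence $S_0\supseteq S_1\supseteq\cdots$ obtained by successively maximising the minimal slack $\min_{(x,y)\notin I_{n-1}}\bigl(c(x,y)-\varphi(x)-\psi(y)\bigr)$ over the current face. The whole proof then becomes a finite, purely combinatorial matching: one shows that the $j$-th polytope $S_j$ coincides with the set of $(\varphi^\alpha,\psi^\alpha)$ whose $\alpha$'s satisfy \eqref{eq:DualLimitcEqual} on the edges already inserted in $T$ and the tightened interval constraints on the rest, and that $S_j$ becomes a singleton exactly when the algorithm terminates. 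No asymptotics of $(\varphi^\epsilon,\psi^\epsilon)$ beyond their mere convergence are used.

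Your proposal, by contrast, tries to read off $\hat\alpha$ directly from the exponential decay rates of the cross-block masses $\gamma^\epsilon(X_n\times Y_m)$. There is a real gap at two points. First, as you yourself note, the Laplace step needs control of the subleading corrections: writing $\varphi^\epsilon=\hat\varphi+r^\epsilon$ one has $\gamma^\epsilon(x,y)=e^{-(c-\hat\varphi-\hat\psi)/\epsilon}\,e^{(r^\epsilon+s^\epsilon)/\epsilon}$, and unless $r^\epsilon,s^\epsilon=o(\epsilon)$ the factor $e^{(r^\epsilon+s^\epsilon)/\epsilon}$ can distort the claimed rate $P_{n,m}/\epsilon$. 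You do not establish this, and it is not a triviality. Second, and more seriously, even granting the rates, the argument ``$\pi_n+\pi_m\le 2\min(P_{n,m},Q_{n,m})\le 2\delta^\ast$ forces $\pi_n=\pi_m=\delta^\ast$'' is not justified: you have only upper bounds $\pi_n,\pi_m\le\delta^\ast$ and no lower bound. Matching only the leading exponential rate in the balance identity yields the node conditions $\min_m P_{n,m}=\min_m Q_{n,m}$, but these do \emph{not} by themselves pin down $\hat\alpha$---with $N\ge 3$ one can arrange configurations satisfying all node conditions with $\pi_n<\delta^\ast$. To separate them one would need to compare prefactors (or higher-order terms) in the expansion, which you have not set up. The inductive ``merging of blocks'' inherits the same issue.

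In short: the paper's argument is a clean reduction to the existing centroid characterisation and a finite polytope computation, while your approach would require a genuine second-order asymptotic analysis of the entropic dual that is neither carried out nor obviously available at this level of generality.
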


By what said above, proving Theorem~\ref{thm:LimitEntropic} corresponds to showing that $(\varphi^\ast, \psi^\ast)$ is the centroid $(\hat\varphi, \hat \psi)$ found in \cite{CominettiSanMartin}.

\begin{proof}
\emph{Step 1: There is exactly one set of constants $(\alpha_1, \ldots, \alpha_N)$, with $\alpha_1=0$, that satisfies \eqref{eq:DualLimitcEqual} for all $\{n,m\} \in E^T$.}\\
Note that the graph $T$ built in Algorithm~\ref{Algorithm:ConstructionT} is a tree, which in particular means that it is connected. 
Then, by Lemma~\ref{lemma.ordering} we find an ordering $v_1, \ldots, v_N$ of the vertices in $T$ such that $v_1=1$ and $T[v_1, \ldots, v_i]$ is connected for every $i \in \{2, \ldots, N\}$. As in the proof of Proposition~\ref{prop:unique}, we can then successively set $\alpha_{v_i}$ for any $i \in \{2, \ldots, N\}$. Since $T$ is a tree, for any $i \in \{2, \ldots, N\}$ there is exactly one edge $\{v_{i'}, v_i\}$ with $i'<i$ in $T$ and we choose $\alpha_{v_i}$ such that $\alpha_{v_i} - \alpha_{v_{i'}}$ satisfies \eqref{eq:DualLimitcEqual}.
Proceeding this way, we uniquely determine $(\alpha_2, \ldots, \alpha_n)$. Moreover, we used one edge for each step $i \in \{2, \ldots, N\}$, and all these edges are distinct. Hence, we used all $N-1$ edges in $T$. Therefore, $(\alpha_1, \ldots, \alpha_N)$ satisfies \eqref{eq:DualLimitcEqual} for all $\{n,m\} \in E^T$.

\emph{Step 2: Description of the construction procedure for the centroid in Cominetti and San Mart\'{i}n~\cite{CominettiSanMartin}.}\\
We denote by $S_0$ the set of all solutions of the dual problem $\DOT(\mu,\nu,c)$. Let $I_0 = \{(x,y) \in \Xcal \times\Ycal: c(x,y) = \varphi(x)+\psi(y) \; \forall (\varphi, \psi) \in S_0\}$. By Proposition~\ref{prop:Gcharacterizes} we have that $E\subseteq I_0$. Since, by Corollary~\ref{cor:relationEandPhi}, there exists a dual optimizer $(\varphi, \psi)$ such that $E = \{ (x,y) \in \Xcal \times \Ycal: c(x,y) = \varphi(x) + \psi(y)\}$, we obtain that $E=I_0$. 
Next, for each $n=0,...,n_\Xcal \cdot n_\Ycal-1$, we define the continuous concave function
\[
f_{n}(\varphi, \psi) = \min_{(x,y) \notin I_{n}} c(x,y) - \varphi(x) - \psi(y).
\] 
For $n=1,...,n_\Xcal \cdot n_\Ycal$, we consider the convex optimization problem
\[
w_n=\max \{f_{n-1}(\varphi, \psi): (\varphi, \psi) \in S_{n-1}\},
\] 
and we denote by $S_n$ the set of its solutions. 
Finally, we write 
\[
J_n = \{ (x,y) \notin  I_{n-1}: c(x,y) - w_n = \varphi(x)+\psi(y)\ \forall (\varphi, \psi) \in S_n\}
\]
and set $I_n = I_{n-1} \cup J_n$.  
Note that we start this induction procedure with the set $S_0$ which is non-empty, bounded, closed and convex, and that by construction all sets $S_n$ preserve the same properties.

\emph{Step 3: The set $J_n$ is non-empty for all $n$ such that $I_{n-1}\subsetneq \Xcal\times \Ycal$.}\\ 
We work by way of contradiction, and assume that $J_n$ is empty. This means that for any $(x,y) \notin I_{n-1}$ there is a pair $(\varphi^{(x,y)}, \psi^{(x,y)}) \in S_n$ such that
\begin{equation}
\label{eq:Jn-non-empty}
c(x,y) - \varphi^{(x,y)}(x) - \psi^{(x,y)}(y) > w_n.
\end{equation}
Since $S_n$ is convex, we have that
\[
(\bar\varphi, \bar\psi) 
:= \Big( \tfrac{1}{n_\Xcal \times n_\Ycal - |I_{n-1}|} \sum_{(x,y)\notin I_{n-1}} \varphi^{(x,y)} , \tfrac{1}{n_\Xcal \times n_\Ycal - |I_{n-1}|} \sum_{(x,y)\notin I_{n-1}} \psi^{(x,y)} \Big) \in S_n.
\]
Let $(x',y') \notin I_{n-1}$ be arbitrary. Then we have  
\[    c(x',y') - \bar\varphi(x') - \bar\psi(y') = \tfrac{1}{n_\Xcal \times n_\Ycal - |I_{n-1}|} \sum_{(x,y)\notin I_{n-1}} \left( c(x',y')-\varphi^{(x,y)}(x') - \psi^{(x,y)}(y') \right).
\] 
Note that $c(x',y')-\varphi^{(x,y)}(x') - \psi^{(x,y)}(y') \ge w_n$ for all $(x,y) \notin I_{n-1}$, since $(\varphi^{(x,y)}, \psi^{(x,y)}) \in S_n$. Moreover,  $c(x',y')-\varphi^{(x',y')}(x') - \psi^{(x',y')}(y') > w_n$ holds by \eqref{eq:Jn-non-empty}. Hence, we have
\[
c(x',y') - \bar\varphi(x') - \bar \psi(y') > w_n \quad \text{for all } (x',y') \notin I_{n-1},
\] 
which is a contradiction to $(\bar\varphi, \bar\psi) \in S_n$.

\emph{Step 4: Identification of the limiting point in \cite{CominettiSanMartin}.}\\
Note that Step 3 implies that the sequence $I_0 \subseteq I_1 \subseteq \ldots \subseteq I_n$ is strictly increasing, as long as $I_{n-1}\subsetneq \Xcal\times \Ycal$.
Because of strict monotonicity of the sets $I_n\subseteq \Xcal\times\Ycal$, for some $M\leq n_\Xcal\times n_\Ycal$ we have that $(\Xcal\cup\Ycal,I_M)$ is a connected graph. Let $M'$ be the first step where this happens. Then, by the same argument as in the proof of Proposition~\ref{prop:unique}-(ii), we can conclude that $S_{M'}=\{(\hat\varphi,\hat\psi)\}$.

Now, set $w_0=0$ and $J_0=I_0$.
In \cite{CominettiSanMartin} it is shown that the polytopes $S_0 \supseteq S_1 \supseteq \ldots \supseteq S_{M'}$ satisfy
\begin{equation}
\label{eq:characSN}
S_n =
\left\{ (\varphi, \psi): \begin{array}{l l} c(x,y) - w_j = \varphi(x) + \psi(y), & \text{ for all } j \in \{0, \ldots, n\},(x,y) \in J_j \\ 
c(x,y) - w_n \ge \varphi(x) + \psi(y), & \text{ for all } (x,y) \notin I_n \end{array} \right\}
\end{equation}
for all $n = \{0,1, \ldots, M'\}$, and that the pair $(\hat\varphi,\hat\psi)$ in $S_{M'}$ is indeed the limit of any sequence $(\varphi^{\eps_n}, \psi^{\eps_n})$ with $\eps_n \rightarrow 0$.

We are therefore left to show that $(\varphi^\ast, \psi^\ast)=(\hat\varphi,\hat\psi)$.

\emph{Step 5: Conclusion in the case of $G$ connected.}\\
If $G$ is connected (i.e. $N=1$), by Proposition~\ref{prop:unique}-(ii) we have that already $S_0=\{(\hat\varphi,\hat\psi)\}$ is a singleton, so the claim immediately follows. Hence, in the following we assume that $N>1$.

\emph{Step 6: Setting for the rest of the proof.}\\
We define $\varphi^\alpha:\Xcal\to\RR$ and $\psi^\alpha:\Ycal\to\RR$ by $\varphi^\alpha(x) = \varphi_n(x) + \alpha_n$ and $\psi^\alpha (y) = \psi_n(y) - \alpha_n$ for all $x \in X_n$ and $y \in Y_n$, $n\in\{1,\ldots,N\}$, with $\alpha_1=0$ and constants $\alpha_2,\ldots,\alpha_N\in\RR$ as in Theorem~\ref{thm:SetOfAllDuals}.
We write $\delta_1 < \ldots < \delta_L$ for the values of $\delta$ in line 6 of Algorithm~\ref{Algorithm:ConstructionT} that occur while iterating the while-loop. Moreover, we denote by $T_l$ the graph obtained in the while-loop in line~5 of  Algorithm~\ref{Algorithm:ConstructionT} for $\delta_l$, $l=1,...,L$. Finally, we set $\delta_0=0$, $\delta_{L+1}=+\infty$ and write $T_0$ for the graph on $\{1, \ldots, N\}$ with no edges.

\emph{Step 7: For any $j\in\{1,\ldots,M'\}$ and $l\in\{0,1,\ldots,L\}$,
if $w_j \in [\delta_l, \delta_{l+1})$, then $S_j$ is the set all pairs $(\varphi^\alpha, \psi^\alpha)$ with $\alpha$ such that $\alpha_1=0$ and:
\begin{itemize}
\item[(i)] for all edges $\{n,m\} \in T_l$,
\eqref{eq:DualLimitcEqual} holds; 
\item[(ii)] for all pairs $(n,m)\in \{1, \ldots, N\}^2$ such that there is no path in $T_l$ joining $n$ and $m$, then
\begin{align*}\label{eq.int.a}
\alpha_n - \alpha_m \in \left[- \min_{x \in X_m, y \in Y_n} c(x,y) - \varphi_m(x)-\psi_n(y) + w_j,  \min_{x \in X_n, y \in Y_m} c(x,y) - \varphi_n(x)- \psi_m(y) - w_j \right].
\end{align*} 
\end{itemize}}
We first prove by induction on $j$ that any pair $(\varphi^\alpha, \psi^\alpha)$ with $\alpha$ as in the claim satisfies the constraints in \eqref{eq:characSN} for $n=j$. Afterwards, we will show that any pair $(\varphi^\alpha,\psi^\alpha)$ where $\alpha$ does not satisfy the conditions in the claim violates at least one constraint in \eqref{eq:characSN}.

To prove the first statement, we note that for $j=0$ the claim immediately follows from Theorem~\ref{thm:SetOfAllDuals}. Now let $j \in \{1, \ldots, M'\}$ and  assume that the claim has been proved for $j-1$.
Let $\alpha$ be as in the claim. Let us first consider $x \in X_n$ and $y \in Y_n$, for some $n \in \{1, \ldots, N\}$. Then
\[
c(x,y) - \varphi^\alpha(i) - \psi^\alpha(y) = c(x,y) - \varphi_n(x) - \alpha_n - \psi_n(y) + \alpha_n = c(x,y) - \varphi_n(x) - \psi_n(y).
\] 
Hence, the difference $c(x,y) - \varphi^\alpha(x) - \psi^\alpha(y)$ equals some value $L(x,y)$ independent of $\alpha$. If $L(x,y) < w_j$, then, by the induction hypothesis,
there is $k<j$ such that $w_k = L(x,y)$ and $(x,y) \in J_k$. Hence,
 the equality in the first constraint in \eqref{eq:characSN} is satisfied.
If $L(x,y) = w_j$, then  $(x,y) \in J_j$ and the equality in the first constraint  in \eqref{eq:characSN} is satisfied.
Finally, if $L(x,y) > w_j$, then we have
\[
c(x,y) - w_j > c(x,y) - L(x,y)  = \varphi^\alpha(x) + \psi^\alpha(y),
\] 
hence $(x,y) \notin I_n$ and the second inequality
in \eqref{eq:characSN} is satisfied.

Now consider $x \in X_n$ and $y \in Y_m$ for $n,m\in \{1, \ldots, N\}$ with $n \neq m$ and such that they are connected by a path $n=n_1 n_2 \ldots n_k=m$ in $T_l$. Then
\begin{align*}
c(x,y) - \varphi^\alpha(x) - \psi^\alpha(y)
&= c(x,y) - \varphi_n(x) - \alpha_n - \psi_m(y) + \alpha_m \\
&= c(x,y) - \varphi_n(x) - \psi_m(y) -  \alpha_{n_1} + \alpha_{n_2} - \alpha_{n_2}+ \alpha_{n_3} - \ldots -\alpha_{n_{k-1}} + \alpha_{n_k}.
\end{align*} 
Since $\alpha$ satisfies the claim,  $\alpha_{n_i} - \alpha_{n_{i-1}}$ is constant for all $i \in \{2, \ldots, k\}$. Therefore,
the difference $c(x,y) - \varphi^\alpha(x) - \psi^\alpha(y)$ equals some value $L(x,y)$. If $L(x,y)<w_j$, then $L(x,y)\le w_{j-1}$ and hence, by the induction hypothesis, the first constraint in \eqref{eq:characSN} is satisfied. If $L(x,y)=w_j$, we obtain $(x,y) \in J_j$ and again the first constraint in \eqref{eq:characSN} is satisfied. If $L(x,y)>w_j$, then we have
\[
c(x,y) - w_j > c(x,y)) - L(x,y) = \phi^\alpha (x) + \psi^\alpha(y),
\] 
thus $(x,y) \notin I_n$ and the second constraint in \eqref{eq:characSN} is satisfied.

Finally, we consider $x \in X_n$ and $y \in Y_m$ such that $n$ and $m$ are not joined by a path in $T_l$. In this case, by construction of $T_l$ we have that $\delta_{n,m} \ge \delta_{l+1} > \delta_l$. In particular, $\delta_{n,m} > w_j$. Hence, the difference $c(x,y) - \varphi^\alpha(x) - \psi^\alpha(y)$ can take different values depending on $\alpha$, thus $(x,y) \notin I_n$. 
By choice of $\alpha$ we have
\begin{align*}
c(x,y) - \varphi^\alpha(x) - \psi^\alpha(y)&= c(x,y) - \varphi_n(x) - \alpha_n - \psi_m(y) + \alpha_m \\
&= c(x,y) - \varphi_n(x) - \psi_m(y) - (\alpha_n-\alpha_m) \\
&\ge c(x,y)- \varphi_n(x) - \psi_m(y) - \min_{x' \in X_n, y' \in Y_m} \left(c(x',y') - \varphi_n(x') - \psi_m(y') \right) + w_j \\
&\ge w_j,
\end{align*} 
hence the second constraint in \eqref{eq:characSN} is satisfied.
This concludes the proof of the fact that, for all $\alpha$ satisfying the claim, the pair $(\varphi^\alpha,\psi^\alpha)$ belongs to $S_j$.

In order to conclude the proof of the claim, it remains to prove that for any $\alpha$ not satisfying the constraints of the claim we have $(\varphi^\alpha, \psi^\alpha) \notin S_j$.
Assume first that $(\varphi^\alpha, \psi^\alpha)$ is such that there are $n,m \in \{1, \ldots, N\}$ such that $(n,m) \in T_l$ and $
\alpha_n - \alpha_m > L_{n,m}$, with $L_{n,m}$ given in \eqref{eq:DualLimitcEqual}.
Note that by construction of $T_l$ we have  $\delta_{n,m} \le \delta_l \le w_l$. Let moreover $x' \in X_n$ and $y' \in Y_m$ be such that
\[
c(x',y') - \varphi_n(x') - \psi_m(y') = \min_{x \in X_n, y \in Y_m} c(x,y) -\varphi_n(x)- \psi_m(y).
\] Then 
\begin{align*}
c(x',y') - \varphi^\alpha(x') - \psi^\alpha(y') &= c(x',y') - \varphi_n(x') - \psi_m(y') - \alpha_n + \alpha_m \\
&< \min_{x \in X_n, y \in Y_m} c(x,y) - \varphi_n(x)- \psi_m(y) -  \frac{1}{2}  \min_{x \in X_n, y \in Y_m} \big( c(x,y) - \varphi_n(x) - \psi_m(y) \big) \\
&\quad + \frac{1}{2} \min_{x \in X_m, y \in Y_n} \big( c(x,y) - \varphi_m(x) - \psi_n(y) \big) \\
&= \delta_{n,m} \le  w_l,
\end{align*} 
thus the constraints in \eqref{eq:characSN} are not satisfied for $(x',y')$. One can proceed in an analogous way for $(\varphi^\alpha, \psi^\alpha)$ such that there are $n,m \in \{1, \ldots, N\}$ with $(n,m) \in T_l$ and $\alpha_n - \alpha_m < L_{n,m}$.

Now assume that $(\varphi^\alpha, \psi^\alpha)$ is such that there are $n,m \in \{1, \ldots, N\}$ with $n \neq m$ such that there is no path from $n$ to $m$ in $T_l$ and that 
\[
\alpha_n - \alpha_m > \min_{x \in X_n, y \in Y_m} c(x,y) - \varphi_n(x)- \psi_m(y) - w_j.
\]
Let $x' \in X_n$ and $y' \in Y_m$ be such that 
\[
c(x',y') - \varphi_n(x') - \psi_m(y') = \min_{x \in X_n, y \in Y_m} c(x,y) -\varphi_n(x)- \psi_m(y).
\] Then 
\begin{align*}
c(x',y') - \varphi^\alpha(x') - \psi^\alpha(y') &= c(x',y') - \varphi_n(x') - \psi_m(y') - \alpha_n + \alpha_m \\
&< \min_{x \in X_n, y \in Y_m} c(x,y)- \varphi_n(x)- \psi_m(y) - \min_{x \in X_n, y \in Y_m} c(x,y)- \varphi_n(x)- \psi_m(y) + w_j  \\
&= w_j,
\end{align*} hence the constraints in \eqref{eq:characSN} are not satisfied. Finally, we consider the case of
$(\varphi^\alpha, \psi^\alpha)$ such that there are $n,m \in \{1, \ldots, N\}$, $n \neq m$, with no path from $n$ to $m$ in $T_l$ and 
\[
\alpha_n -\alpha_m < - \min_{x \in X_m, y \in Y_n} \big(c(x,y)- \varphi_m(x)-\psi_n(y)\big) + w_j.
\]
Note that this inequality is equivalent to 
\[
\alpha_m -\alpha_n > \min_{x \in X_m, y \in Y_n} \big(c(x,y)- \varphi_m(x)-\psi_n(y)\big) - w_j,
\] 
thus we arrive again at a violation of the constraints in \eqref{eq:characSN}.

\emph{Step 8. Conclusion.} \\
By Step 7, we note that for any $j \in \{1, \ldots, M'\}$ such that $w_j \ge \delta_L$ we have $S_j = \{(\varphi^\ast, \psi^\ast)\}$. We will now show that for any $j \in \{1, \ldots, M'\}$ such that $w_j \in  [\delta_{L-1}, \delta_L)$ we have $|S_j|>1$. From this, since the sets $(S_j)_{j \in \{1, \ldots, M'\}}$ are decreasing, and $S_{M'}=\{(\hat\varphi,\hat\psi)\}$, it will follow that $|S_j|>1$ for all $j \in \{1, \ldots, M'\}$ with $w_j < \delta_L$, while $w_{M'} = \delta_L$ with $S_{M'} = \{(\varphi^\ast, \psi^\ast)\}$, which then concludes the proof.

Hence, let us now consider $j \in \{1, \ldots, M'\}$ such that $w_j \in  [\delta_{L-1}, \delta_L)$ and let us prove that the set $S_j$ consists of at least two elements. First, note that the set $S_j$ contains the point $(\varphi^\ast, \psi^\ast)$, again since the sets $(S_j)_{j \in \{1, \ldots, M'\}}$ are decreasing. This means that the unique constants $(\alpha_1, \alpha_2, \ldots, \alpha_N)$ from Step 1 satisfy (i)-(ii) of Step 7  for $w_j\in  [\delta_{L-1}, \delta_L)$ and $T_{L-1}$.

Note also that the graph $T_{L-1}$ is not connected. Indeed, since the algorithm successively adds edges, we know that, as the algorithm has not stopped ($L-1<L$), the graph $T_{L-1}$ has less edges than $T_L$. Now, by \cite[Theorem 1.5.1]{DiestelGT}, any tree is minimally connected, which means that, whenever at least one edge is removed, the resulting subgraph is not connected. Hence, $T_{L-1}$ is not connected.
Let us enumerate the vertices of $T_{L-1}$ as $v_1, \ldots, v_N$ such that there is $K \in \{1, \ldots, N-1\}$ for which $\{v_{K+1}, \ldots, v_N\}$ is a connected component of $T_{L-1}$. Note that the set $\{v_1, \ldots, v_K\}$ can contain one or more components. Since $T_{L-1}$ is a subgraph of a tree and $\{v_{K+1}, \ldots, v_N\}$ is connected, again by \cite[Theorem 1.5.1]{DiestelGT}, there is a unique path from $v_{n}$ to $v_m$  for all $n,m \ge K+1$. Let $v_n=v^{(1)}v^{(2)}\ldots v^{(l)}=v_m$ be this path. Then define
\[
    \hat{L}_{v_n,v_m} =  L_{v^{(1)} v^{(2)}} + \ldots + L_{v^{(l-1)}v^{(l)}}.
\] 
Any $\tilde{\alpha}=(\tilde{\alpha}_1,\ldots,\tilde{\alpha}_N)$ with 
\[\tilde{\alpha}_v = \alpha_v \text{ for all }v \in \{v_1, \ldots, v_K\},   \quad \tilde{\alpha}_{v_{K+1}} = \beta, \quad \text{ and }\, \tilde{\alpha}_v = \beta + \hat{L}_{v_n,v_{K+1}} \text{ for all } v \in \{v_{K+2}, \ldots,  v_N\},
\] 
for some 
\begin{align}
    \label{eq:DualLimitConcl}
    \begin{split}
        \beta \in &\left[ \max_{m \le K, n \ge K+1} - 
        \left( \min_{x \in X_{v_m}, y \in Y_{v_n}} c(x,y) - \varphi_{v_m}(x) - \psi_{v_n}(y) \right) + w_j - \hat{L}_{v_n,v_{K+1}} + \alpha_{v_m}, \right. \\
        &\quad \left. \min_{m \le K, n \ge K+1} \left( \min_{x \in X_{v_n}, y \in Y_{v_m}} c(x,y) - \varphi_{v_n}(x) - \psi_{v_m}(y)\right) -w_j - \hat{L}_{v_n, v_{K+1}} + \alpha_{v_m} \right],
    \end{split}
\end{align} 
satisfies the constraints in Step 7.  Indeed, \eqref{eq:DualLimitConcl} is equivalent to the constraint (ii) in Step 7 for $v_n, v_m$ such that $m \le K$ and $n \ge K+1$. That the remaining constraints are satisfied (i.e. (i) for all $\{n,m \} \in T_{L-1}$ and (ii) for $v_n, v_m$ such that $n,m \le K$ and $v_n, v_m$ such that $n,m \ge K+1$) follows from the fact that $\alpha$ satisfies (ii) in Step 7 and since we have $\tilde{\alpha}_{v_n} - \tilde{\alpha}_{v_m} = \alpha_{v_n} - \alpha_{v_m}$ in all described cases.

We will now show that the interval in \eqref{eq:DualLimitConcl} has non-empty interior, which implies that $|S_j|>1$. 
For this we note that, since $S_j=\{(\phi^\ast, \psi^\ast)\}$ for $w_j = \delta_L$, then $\beta = \alpha_{v_{K+1}}$ satisfies the constraint \eqref{eq:DualLimitConcl} with $\delta_L$ instead of $w_j$. Hence,
\begin{align*}
&\max_{m \le K, n \ge K+1} - \left( \min_{x \in X_{v_m}, y \in Y_{v_n}} c(x,y) - \varphi_{v_m}(x) - \psi_{v_n}(y) \right) + w_j - \hat{L}_{v_n,v_{K+1}} + \alpha_{v_m}, \\
&<\max_{m \le K, n \ge K+1} - \left( \min_{x \in X_{v_m}, y \in Y_{v_n}} c(x,y) - \varphi_{v_m}(x) - \psi_{v_n}(y) \right) + \delta_L - \hat{L}_{v_n,v_{K+1}} + \alpha_{v_m}, \\
&\le  \min_{m \le K, n \ge K+1} \left( \min_{x \in X_{v_n}, y \in Y_{v_m}} c(x,y) - \varphi_{v_n}(x) - \psi_{v_m}(y)\right) -\delta_L - \hat{L}_{v_n, v_{K+1}} + \alpha_{v_m} \\
&<  \min_{m \le K, n \ge K+1} \left( \min_{x \in X_{v_n}, y \in Y_{v_m}} c(x,y) - \varphi_{v_n}(x) - \psi_{v_m}(y)\right) -w_j - \hat{L}_{v_n, v_{K+1}} + \alpha_{v_m}
\end{align*}
which shows that the interval in \eqref{eq:DualLimitConcl} has a non-empty interior. Therefore, $|S_j|>1$ and the claim follows.
\end{proof}

\begin{remark}
The construction of the centroid in Cominetti and San Mart\'{i}n~\cite{CominettiSanMartin} is informally described as tightening all non-saturated constraints until some of them become binding. 
This idea is also the basis of our construction. 
The set of all dual optimizers is described by the set of all constants $\alpha_1, \ldots, \alpha_N$ satisfying the constraints in Theorem~\ref{thm:SetOfAllDuals}-(iii).  These constraints  require that $\alpha_n-\alpha_m$ lies in a particular interval. Indeed, the upper and lower bound for the difference $\alpha_n - \alpha_m$ is given by the value where for some pair in $X_m \times Y_n$ and in $X_n \times Y_m$, respectively, the constraint becomes binding. The term $2\delta_{n,m}$ now describes the difference of the upper and the lower bound, i.e. the width of the admissible interval. Hence, if $w_j=\delta_{n,m}$ the constraint will become binding.

In our algorithm, we successively add edges to a graph on $\{1, \ldots, N\}$. These edges  (non-redudantly) describe that the difference $\alpha_n - \alpha_m$ is fixed to a certain value, namely, the one given by \eqref{eq:DualLimitcEqual}.  The edges that we can add come from a candidate set, which is shrinking. At first, it is the set of all pairs $(n,m) \in [\{1, \ldots, N\}]^2$. Once we add an edge $(n,m)$ to the graph $T$, we delete all pairs $(n',m')$ where the value of $\alpha_{n'}-\alpha_{m'}$ is already fixed to a certain value, i.e. we delete all edges for which the pair $(n',m')$ does not impose an additional constraint on the solution set. These pairs are exactly those that are connected in $T_l$, since in this case 
$\alpha_{n'} - \alpha_{m'} = \alpha_{n'} - \alpha_{n_1} + \alpha_{n_1} - \alpha_{n_2} + \ldots - \alpha_{m'}$ is already fixed to a certain value, as $\{n^i,n^{i+1}\} \in T_l$ for all $i \in \{[0, \ldots, k-1\}$.
\hfill $\diamond$
\end{remark}

\begin{example}
\label{ex3}
We consider a slightly modified version of Example~\ref{ex2}. Namely, we set
\[
\hat{c} (x,y) = c(x,y) +2 \cdot 1_{\{(3,4)\}}(x,y) + 1_{\{(4,3)\}}(x,y).
\] 
The primal optimizer $\gamma$ from Example~\ref{ex2} is again an optimizer for the problem $\OT(\mu, \nu, \hat{c})$, but now we have $G=G^\gamma$. Nonetheless, the functions $\varphi_n$ and $\psi_n$, $n \in \{1,2,3\}$, described in Example~\ref{ex2} are still the unique dual optimizers for the subproblems on the connected components. Any dual optimizer can be represented as in Theorem~\ref{thm:SetOfAllDuals} where the constant $\alpha=(\alpha_1, \alpha_2, \alpha_3)$ now satisfies 
\[
0 \le \alpha_1 - \alpha_2 \le 2, \, 0 \le \alpha_1 - \alpha_3 \le 1 \text{ and } -2 \le \alpha_2 - \alpha_3 \le 1.
\] 

We now derive the limit $(\varphi^\ast, \psi^\ast)$ of the dual optimizers of the entropic optimal transport problems using Algorithm~\ref{Algorithm:ConstructionT} and Theorem~\ref{thm:LimitEntropic}. 
We first note that 
\[
\delta_{1,2} = 0.5 \cdot (2 + 0) = 1, \; \delta_{1,3} = 0.5 \cdot (1 + 0) = 0.5 \text{ and } \delta_{2,3} =0.5 \cdot (1 + 2) = 1.5.
\] 
Hence, the algorithm selects $\{1,3\}$ as the first edge of $T$ and $\{1,2\}$ as the second edge. Then the constant $\alpha$ satisfies
\[
\alpha_1 - \alpha_2 = 0.5 \cdot 2 - 0.5 \cdot 0 =1 \text{ and } \alpha_1 - \alpha_3 = 0.5 \cdot 1 - 0.5 \cdot 0 = 0.5,
\] which, using the convention $\alpha_1=0$, yields $\alpha_2 = -1$ and $\alpha_3=-0.5$. Therefore, the limit $(\varphi^\ast, \psi^\ast)$ of the dual entropic optimizers  reads as
\begin{align*}
    &\varphi^\ast(1) = 0, \; \varphi^\ast(2) = -1, \; \varphi^\ast(3) = -1, \; \varphi^\ast(4) = -0.5 \\
    &\psi^\ast(1)=1,  \; \psi^\ast(2)=2, \; \psi^\ast (3)=1,\; \psi^\ast(4) = 1.5, \; \psi^\ast(5) = 1.5. 
\end{align*}\hfill $\diamond$
\end{example}

\section{Stackelberg-Cournot-Nash Equilibria.}
\label{sec:SCNE}
In this section we consider a game between a principal and a population of agents, that is a
Stackelberg version of the problem considered by
Blanchet and Carlier \cite{BlanchetCN}. We provide existence results together with a characterization of equilibria. We then study approximation by regularization and conclude with a numerical example.

\subsection{Problem Formulation.}
We consider a continuum of agents (population), characterized by a finite number of types,  that need to choose among a finite number of actions, and a principal who determines the additional costs an agent faces for choosing a certain action. 
To set this problem in the optimal transport setting of Section~\ref{sect.fOT}, we let $\Xcal=\{x_1,\ldots,x_{n_\Xcal}\}$ be the set of all types and $\Ycal=\{y_1,\ldots,y_{n_\Ycal}\}$ the set of all actions. A distribution of types, $\mu\in\Pcal(\Xcal)$, is fixed, and the optimal distribution of actions will be found in equilibrium among all $\nu\in\Pcal(\Ycal)$. With an abuse of notation, we identify the distributions $\mu$ and $\nu$ with their respective probability vectors in the $n_\Xcal$- and $n_\Ycal$-simplex, $\Delta_{n_{\Xcal}}$ and $\Delta_{n_{\Ycal}}$ resp., i.e., $\mu=(\mu_1,\ldots,\mu_{n_\Xcal})\equiv (\mu(x_1),\ldots,\mu(x_{n_\Xcal}))\in\Delta_{n_{\Xcal}}=\{\omega\in[0,1]^{n_\Xcal}:\sum_{n=1}^{n_\Xcal}\omega_n=1\}$, and analogously for $\nu$. 
For each $\nu\in\Delta_{n_{\Ycal}}$, a coupling $\gamma\in\Pi(\mu,\nu)\subseteq\Pcal(\Xcal\times\Ycal)$ describes the strategy of the agents, with the interpretation that $\gamma_{ij}/\mu_i$ is the probability that an agent of type $x_i$ chooses action $y_j$.
Again with an abuse of notation, we use $\gamma$ also to denote the matrix in the simplex $\Delta_{n_\Xcal\times n_\Ycal}$ with entries $\gamma_{ij}=\gamma(x_i,y_j)$, for $i\in\{1,\ldots,n_\Xcal\}, j\in\{1,\ldots,n_\Ycal\}$.

The cost of each agent does not only depend on its own type and action, but also on the actions of all other agents in a mean-field sense, i.e. it will not depend on single choices of other agents, but on the distribution $\nu$ of their actions (the continuum of agents allows us to consider as indistinguishable the distribution of actions of all agents and that of all agents except one). 
Specifically, for a distribution of actions $\nu\in\Delta_{n_{\Ycal}}$ and a vector of costs $k= (k_j)_{j\in\{1,\ldots,n_\Ycal\}}\in K$ chosen by the principal from a fixed subset $K \subseteq \mathbb{R}^{n_\Ycal}$, the cost of an agent of type $x_i$, $i\in\{1,\ldots,n_\Xcal\}$, choosing action $y_j$, $j\in\{1,\ldots,n_\Ycal\}$, is given by
\begin{equation}\label{eq.totcost}
C_{ij}[\nu,k] := c_{ij} + k_j + f_j(\nu_j) + \sum_{a=1}^{n_\Ycal} \theta_{aj}\nu_a.
\end{equation}
Here: $c=(c_{ij})_{i\in\{1,\ldots,n_\Xcal\}, j\in\{1,\ldots,n_\Ycal\}}\in\RR^{n_\Xcal\times n_\Ycal}$ takes care of the part of the cost depending on both type and action of such agent; $k_j$ is paid for action $j$ independently of the type and of other agents actions; and last two components consider the interactions with the other agents, with $f_j: [0,1] \rightarrow \mathbb{R}$ nondecreasing and continuous functions, reflecting the fact that choosing a more popular action (within agents of same type) is more costly, and $(\theta_{aj})_{a,j\in\{1,\ldots,n_\Ycal\}}\in\RR^{n_\Ycal\times n_\Ycal}$ symmetric matrix so that the last term reflects the cost related to actions of agents of different type.
Finally, the principal faces a cost $G(\nu,k)$ that depends on the chosen vector of costs $k$ and on the agents' actions through the distribution $\nu$. The function $G: \Delta_{n_\Xcal\times n_\Ycal}\rightarrow \mathbb{R}$ is assumed to be continuous.
A possible interpretation of the different role played by the terms in \eqref{eq.totcost} is illustrated by the following classical example.
\begin{example} \label{ex:SCNE}
Consider a big company with many employees for which vacation times have to be coordinated. The possible vacation times are $\Ycal = \{1, \ldots, n_{\Ycal}\}$, which we could interpret as weeks. The employees differ through their preferences for these time slots because they have kids in school, prefer travelling in summer or winter, etc. We assume that there  are finitely many types $\Xcal = \{1, \ldots, n_\Xcal\}$. The cost of the individual agents is given by 
\[
C_{ij}[\nu,k] := c_{ij} + k_j + f_j(\nu_j) + \sum_{a=1}^{n_\Ycal} g(|a-j|) \nu_a,
\] 
where $g$ is a decreasing function and $f_j,  j=1, \ldots, n_\Ycal$ are strictly increasing and continuous functions. The components are interpreted as follows: the first is the cost of taking vacation in week $j$ for an agent of type $i$; $k_j$ is an additional cost charged by the employer for agents that pick week $j$; the last two terms capture the effect that the workload is increasing the more agents are on holiday in the same week (captured by $f_j(\nu_j)$) or in the weeks that are close (captured by $g(|a-j|) \nu_a$, since $g(|a-j|)$ is smaller the larger the distance of $a$ and $j$ is).
The principal's cost function reads as $G(\nu) = \sum_{j=1}^n \nu_j^2$, expressing the preference that not too many employees are on vacation at the same time. This cost function does not depend on $k$ reflecting the fact that employers can increase or reduce the costs of the agents, which are measured in utility, by measures which are not costly for themselves. \hfill$\diamond$
\end{example}

In what follows we will use $p_1$ and $p_2$ for the projections into first and second marginal of a measure, so that $\gamma\in\Pi(\mu,\nu)$ satisfies $p_1\#\gamma=\mu$ and $p_2\#\gamma=\nu$. We will denote by $\Pi(\mu,\cdot)$ the set of measures $\gamma$ with $p_1\#\gamma=\mu$ and any second marginal, and by $\Pi(\cdot,\nu)$ the set of measures $\gamma$ with $p_2\#\gamma=\nu$ and any first marginal. Similar notation will be used when one of the marginal is not fixed in the $\OT$ problem.

\begin{definition}[SCNE]\label{def.SCNE}
For $k \in K$, a strategy $\gamma^k$ is said \emph{optimal for $k$}, or a \emph{Cournot-Nash equilibrium} (CNE) w.r.t. $k$, if it is optimal for the problem
\begin{equation}\label{eq.CNE}
\inf_\gamma\, C[\nu^k,k] \cdot \gamma = \inf_\gamma \sum_{i=1}^{n_\Xcal}\sum_{j=1}^{n_\Ycal}
C_{ij}[\nu^k,k]\gamma_{ij},
\end{equation}
where $\nu^k=p_2\#\gamma^k$, and the minimization is run over all $\gamma\in \Pi(\mu,\cdot)$.

A pair $(\gamma^{k^*}, k^*)\in\Pi(\mu,.)\times K$ is a \emph{Stackelberg-Cournot-Nash equilibrium (SCNE)} if it satisfies the two following conditions:
\begin{itemize}
\item[(i)] $\gamma^{k^*}$ is a CNE w.r.t. $k^*$,
\item[(ii)] $G(\nu^{k^*},k^*) \le G(\nu^k, k)$ for all $k\in K$ and $\gamma^k$  CNE w.r.t. $k$. 
\end{itemize}
\end{definition}
The fact that, for a fixed $k\in K$, the solution to problem \eqref{eq.CNE} is a Nash equilibrium for the agents, is easily seen considering that we are in a setting with a continuum of agents, so that a change of action by one of them would not change the distribution of actions; see  
\cite{acciaio2021cournot}. This gives condition (i) of the SCNE. Condition (ii) expresses the Stackelberg equilibrium in a principal-agent game, that is, the situation where the principal optimizes over a set of possible choices (here $k\in K$), knowing how agents would optimally act w.r.t. each choice (here $\gamma^k$).

\subsection{The Optimization Problem of the Agents.} \label{sec:game_continuum}
In what follows we will relate the optimization problem for the agents to an equivalent variational problem related to optimal transport.
Let us define the energy function $\mathcal{E}:\Delta_{n_{\Ycal}}\to\RR$ as
\[
\mathcal{E}[\nu] := \sum_{j=1}^{n_\Ycal} F_j(\nu_j) + \frac{1}{2} \sum_{a,j=1}^{n_\Ycal} \theta_{aj}\nu_a\nu_j,
\] 
with $F_j(t):= \int_0^t f_j(s) \de s$.
Then the variational problem of interest is given by
\begin{equation}
\label{eq:VariationalProblem}
\inf_{\nu \in \Pcal(\Ycal)} \left\{ \OT(\mu,\nu,c) + k \cdot \nu + \mathcal{E}[\nu]\right\}.
\end{equation}

As for standard Cournot-Nash games, we can relate Cournot-Nash equilibria to the variational problem \eqref{eq:VariationalProblem}.

\begin{prop}[\cite{acciaio2021cournot},Theorem 3.4]
\label{thm:PopulationGame}
Assume that $\mathcal{E}$ is convex. Then $\gamma^k\in\Pi(\mu,\cdot)$ is a CNE w.r.t. $k$ if and only if $\nu^k=p_2\#\gamma^k$ solves \eqref{eq:VariationalProblem} and $\gamma^k$ is an optimizer for $\OT(\mu, \nu^k,c)$.
\end{prop}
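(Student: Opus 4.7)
My plan is to decompose the agents' linear-in-$\gamma$ optimization in \eqref{eq.CNE} into an inner optimal transport problem and an outer problem over the second marginal, and then to reduce the fixed-point structure at the level of $\nu$ to the variational problem \eqref{eq:VariationalProblem} by exploiting convexity of $\mathcal{E}$.

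First I would freeze $\nu^k = p_2\#\gamma^k$ and rewrite the objective in \eqref{eq.CNE} as
\begin{equation*}
C[\nu^k,k]\cdot\gamma \;=\; \sum_{ij} c_{ij}\gamma_{ij} \;+\; \sum_j\Bigl(k_j + f_j(\nu^k_j) + \sum_a \theta_{aj}\nu^k_a\Bigr)(p_2\#\gamma)_j,
\end{equation*}
so that the second sum depends on $\gamma$ only through its second marginal. Splitting the infimum over $\Pi(\mu,\cdot)$ as an outer infimum over $\nu\in\Delta_{n_{\Ycal}}$ and an inner infimum over $\Pi(\mu,\nu)$, the latter equals $\OT(\mu,\nu,c)$ by definition. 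Hence $\gamma^k$ is a CNE w.r.t.\ $k$ if and only if (a) $\gamma^k$ is optimal for $\OT(\mu,\nu^k,c)$ and (b) $\nu^k$ minimizes, over $\Delta_{n_\Ycal}$,
\begin{equation*}
J_k(\nu;\nu^k) \;:=\; \OT(\mu,\nu,c) + k\cdot\nu + \nabla\mathcal{E}(\nu^k)\cdot\nu,
\end{equation*}
where I use a direct computation, exploiting the symmetry of $\theta$ and $F_j'=f_j$, to check that $\partial_{\nu_j}\mathcal{E}(\nu) = f_j(\nu_j) + \sum_a\theta_{aj}\nu_a$.

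It then remains to show that condition (b) is equivalent to $\nu^k$ solving \eqref{eq:VariationalProblem}, i.e.\ minimizing $V_k(\nu) := \OT(\mu,\nu,c) + k\cdot\nu + \mathcal{E}(\nu)$. Both $V_k$ and $J_k(\cdot;\nu^k)$ are convex on $\Delta_{n_\Ycal}$: $\OT(\mu,\cdot,c)$ is convex as a supremum of affine functions via the dual \eqref{eq.DOT}, $\mathcal{E}$ is convex by assumption, and the remaining terms are linear. The key identity is that
\begin{equation*}
h(\nu) \;:=\; V_k(\nu) - J_k(\nu;\nu^k) \;=\; \mathcal{E}(\nu) - \nabla\mathcal{E}(\nu^k)\cdot\nu
\end{equation*}
is a convex, $C^1$ function with $\nabla h(\nu^k)=0$, so $\nu^k$ is a global minimizer of $h$. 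From $h(\nu)\ge h(\nu^k)$ one immediately gets $V_k(\nu)-V_k(\nu^k) \ge J_k(\nu;\nu^k)-J_k(\nu^k;\nu^k)$, which yields the implication (b) $\Rightarrow$ \eqref{eq:VariationalProblem}. For the converse I would argue via directional derivatives: if $\nu^k$ minimizes $V_k$ on the simplex, then $V_k'(\nu^k;\nu-\nu^k)\ge 0$ for every $\nu\in\Delta_{n_\Ycal}$, and since $h$ is smooth with vanishing gradient at $\nu^k$, the directional derivatives of $V_k$ and $J_k(\cdot;\nu^k)$ at $\nu^k$ agree in every admissible direction; convexity of $J_k(\cdot;\nu^k)$ then delivers (b).

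The main obstacle is the non-smoothness of $\OT(\mu,\cdot,c)$ in $\nu$, which forces first-order conditions to be stated via directional derivatives (or subgradients) of convex functions rather than gradients. The trick that circumvents this is precisely that the non-smooth piece $\OT(\mu,\cdot,c)+k\cdot\nu$ is common to $V_k$ and $J_k(\cdot;\nu^k)$, so only the smooth remainder $h$ matters for the equivalence, and the single identity $\nabla h(\nu^k)=0$ handles both implications. Combining (a) with the equivalence (b) $\Leftrightarrow$ \eqref{eq:VariationalProblem} yields the stated characterization of CNEs.
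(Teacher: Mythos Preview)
The paper does not give its own proof of this proposition; it is quoted verbatim from \cite{acciaio2021cournot}, Theorem~3.4, and the text moves on immediately afterwards. So there is no in-paper argument to compare against.

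Your argument is correct and self-contained. The decomposition of the CNE condition into (a) optimality of $\gamma^k$ for $\OT(\mu,\nu^k,c)$ and (b) the linearized outer problem $\min_\nu J_k(\nu;\nu^k)$ is exactly the right way to unpack Definition~\ref{def.SCNE}, and your identification $\nabla\mathcal{E}(\nu)_j=f_j(\nu_j)+\sum_a\theta_{aj}\nu_a$ is what makes the linear part of the cost match $\nabla\mathcal{E}(\nu^k)\cdot\nu$. The equivalence of (b) with \eqref{eq:VariationalProblem} via the smooth convex remainder $h(\nu)=\mathcal{E}(\nu)-\nabla\mathcal{E}(\nu^k)\cdot\nu$ and the observation $\nabla h(\nu^k)=0$ is clean: one direction is just adding the two inequalities, and for the converse your directional-derivative argument is valid because $h$ is $C^1$, so $V_k'(\nu^k;d)=J_k'(\nu^k;d)$ for every feasible $d$, and the convex lower bound $J_k(\nu;\nu^k)\ge J_k(\nu^k;\nu^k)+J_k'(\nu^k;\nu-\nu^k)$ closes the loop. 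The only minor point worth stating explicitly is that $\mathcal{E}$ is indeed $C^1$ here (since $F_j'=f_j$ is continuous and the quadratic term is smooth), which you rely on when writing $\nabla h(\nu^k)=0$.
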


As in \cite{BlanchetCNFinite}, we define 
\begin{align*}
\oOT(\mu, \nu, c) := \begin{cases}
\OT(\mu,\nu,c), &\text{if } \nu \in \Pcal(\Ycal) \\
\infty, &\text{else.}
\end{cases}
\end{align*} 
The reason for this is to have a function $\oOT(\mu, \cdot, c)$ defined on the whole $\RR^{n_\Ycal}$, so that one can apply classical results from convex analysis.

A special role in our results will be played by the subdifferential of $F=\OT(\mu, \cdot , c)$ or $F=\oOT(\mu, \cdot, c)$, which is defined by
\[
\partial F(\nu)=\left\{
f\in \RR^{n_\Ycal} : F(\nu) - f\cdot \nu \leq F(\eta) - f\cdot \eta,\quad \forall\ \eta\in\Pcal(\Ycal)
\right\}.
\]
Note that, for $\nu\notin\Pcal(\Ycal)$ we have $\partial_\nu \oOT(\mu,\nu, c)=\emptyset$, while for $\nu\in\Pcal(\Ycal)$ we have $\partial_\nu \oOT(\mu,\nu, c)=\partial_\nu \OT(\mu,\nu, c)$. 
As an immediate consequence, we obtain the following result.
\begin{prop}
\label{prop:NecessarySufficient}
Assume that $\mathcal{E}$ is convex. Then $\nu\in\Pcal(\Ycal)$ is a minimizer of \eqref{eq:VariationalProblem} if and only if 
\[
0 \in \partial_\nu \OT(\mu,\nu, c) + k + \nabla \mathcal{E}[\nu].
\] 
If $\mathcal{E}$ is strictly convex, then there is a unique optimizer of \eqref{eq:VariationalProblem}.
\end{prop}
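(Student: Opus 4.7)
The plan is to treat \eqref{eq:VariationalProblem} as a convex minimization on $\RR^{n_\Ycal}$ via the extension $\oOT$, and apply standard first-order optimality conditions. First I would establish convexity of the three summands. The map $\nu\mapsto \OT(\mu,\nu,c)$ is convex on $\Pcal(\Ycal)$: by Theorem~\ref{thm.510}-(i), $\OT(\mu,\nu,c) = \DOT(\mu,\nu,c)$, which is the supremum over admissible dual pairs of $\int \varphi\,d\mu + \int \psi\,d\nu$, i.e.\ a supremum of affine functions of $\nu$. Equivalently, $\oOT(\mu,\cdot,c)$ is a proper convex function on $\RR^{n_\Ycal}$, taking $+\infty$ off the simplex. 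The term $k\cdot\nu$ is linear, and $\mathcal{E}$ is convex by assumption; the $F_j$ are $C^1$ (since $f_j$ is continuous) and the quadratic part is smooth, so $\mathcal{E}$ is differentiable on $\Pcal(\Ycal)$ with gradient $\nabla\mathcal{E}$. Hence the objective $h(\nu):=\oOT(\mu,\nu,c) + k\cdot\nu + \mathcal{E}[\nu]$ is convex on $\RR^{n_\Ycal}$.

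Next, I would invoke the standard fact that a convex function is minimized at $\nu$ iff $0\in\partial h(\nu)$. For $\nu\notin\Pcal(\Ycal)$ the objective is $+\infty$, so no such $\nu$ can be a minimizer. For $\nu\in\Pcal(\Ycal)$, I would apply the Moreau--Rockafellar sum rule to $h$. The rule applies trivially here since both $k\cdot\nu$ and $\mathcal{E}[\nu]$ are finite-valued and differentiable on the effective domain of $\oOT(\mu,\cdot,c)$, so no constraint qualification is needed beyond that. This yields
\[
\partial h(\nu) \;=\; \partial_\nu \oOT(\mu,\nu,c) \;+\; k \;+\; \nabla \mathcal{E}[\nu].
\]
As observed in the paragraph preceding the statement, $\partial_\nu \oOT(\mu,\nu,c) = \partial_\nu \OT(\mu,\nu,c)$ for $\nu\in\Pcal(\Ycal)$, which gives the asserted equivalence: $\nu\in\Pcal(\Ycal)$ is a minimizer iff $0\in \partial_\nu \OT(\mu,\nu,c) + k + \nabla\mathcal{E}[\nu]$.

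For the uniqueness claim, if $\mathcal{E}$ is strictly convex, then $h$ is a sum of a convex function ($\oOT$), an affine one ($k\cdot$), and a strictly convex one ($\mathcal{E}$), hence strictly convex on the convex set $\Pcal(\Ycal)$. Strict convexity excludes two distinct minimizers, and existence is guaranteed by compactness of $\Pcal(\Ycal)$ together with continuity of each summand on the simplex (continuity of $\OT(\mu,\cdot,c)$ on $\Pcal(\Ycal)$ follows, e.g., from its convexity on a finite-dimensional convex set with nonempty relative interior). The only delicate point in the whole argument is the sum-rule step, but it reduces here to the fact that adding a finite-valued smooth convex function to a closed proper convex one does not enlarge the subdifferential beyond the naive sum; so I do not expect any real obstacle.
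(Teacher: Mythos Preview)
Your argument is correct and matches the paper's approach: the paper gives no explicit proof, stating the result ``as an immediate consequence'' of the convex-analysis setup in the preceding paragraph (the extension $\oOT$ and the identification $\partial_\nu\oOT=\partial_\nu\OT$ on $\Pcal(\Ycal)$), which is precisely the route you spell out.
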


\subsection{The Optimization Problem of the Principal.}
For any fixed vector of costs $k\in K$, the optimization problem of the population, that is \eqref{eq.CNE}, has been reduced to the variational problem \eqref{eq:VariationalProblem}. 
Let us write 
\[
\text{BR}: K \rightarrow 2^{\Pcal(\Ycal)}
\]
for the set-valued map that maps $k$ to the set of all optimizers of \eqref{eq:VariationalProblem}.
By Proposition~\ref{prop:NecessarySufficient}, whenever $\mathcal{E}$ is strictly convex, the optimizer $\nu^k$ of \eqref{eq:VariationalProblem} is unique. Hence, in this case the map $\text{BR}$ is a function.

\begin{theorem}
\label{thm:BRContinuous}
Assume that $\mathcal{E}$ is convex.
Then the map $\text{BR}$ has a closed graph.
\end{theorem}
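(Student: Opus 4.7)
The plan is to show the closed-graph property directly from the variational characterization of $\text{BR}$ given by Proposition~\ref{thm:PopulationGame}: for fixed $\mu$, $c$, the map $\text{BR}(k)$ consists exactly of the minimizers over $\Pcal(\Ycal)$ of
\[
J(\nu,k) := \OT(\mu,\nu,c) + k\cdot\nu + \mathcal{E}[\nu].
\]
Thus it is enough to invoke the standard closure-of-argmin principle: if $J$ is jointly continuous on $\Pcal(\Ycal)\times K$, then the set-valued map $k\mapsto \argmin_{\nu\in\Pcal(\Ycal)} J(\nu,k)$ has closed graph. Concretely, I would take sequences $k_n\to k$ in $K$ and $\nu_n\in \text{BR}(k_n)$ with $\nu_n\to\nu$ (note $\nu\in\Pcal(\Ycal)$ since the simplex is closed), write the optimality inequality $J(\nu_n,k_n)\le J(\eta,k_n)$ for every $\eta\in\Pcal(\Ycal)$, and pass to the limit on both sides using the continuity of $J$ to conclude $\nu\in\text{BR}(k)$.

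The three summands in $J$ must then be shown to be continuous. The term $k\cdot\nu$ is bilinear, hence jointly continuous. The term $\mathcal{E}[\nu]$ is continuous because each $F_j(t)=\int_0^t f_j(s)\,\de s$ is continuous in $t$ (the $f_j$ are continuous) and the quadratic cross-term in $\nu$ is a polynomial.

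The only non-trivial piece is the continuity of $\nu\mapsto\OT(\mu,\nu,c)$ on $\Pcal(\Ycal)$; this is the main technical point, though it is a classical fact. I would prove it via duality: by Theorem~\ref{thm.510}-(i),
\[
\OT(\mu,\nu,c) = \max\Bigl\{\textstyle\sum_i \varphi_i \mu_i + \sum_j \psi_j \nu_j \,:\, \varphi_i+\psi_j\le c_{ij}\ \forall i,j\Bigr\}.
\]
Fixing the normalization $\varphi_1=0$ (which does not change the value), the feasible set of pairs $(\varphi,\psi)$ becomes a compact polytope in $\RR^{n_\Xcal+n_\Ycal}$, as the constraints together with $\varphi_1=0$ bound each coordinate in terms of $\max_{i,j}|c_{ij}|$. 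Consequently $\OT(\mu,\cdot,c)$ is the maximum of a family of linear functions of $\nu$ indexed by a compact set, and Berge's maximum theorem (or equivalently the basic stability theory of linear programs with bounded feasible dual sets) yields continuity on $\Pcal(\Ycal)$.

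With joint continuity of $J$ established, the closure argument sketched above delivers $\nu\in\text{BR}(k)$, i.e.\ $(k,\nu)\in\mathrm{graph}(\text{BR})$, completing the proof. I expect the only place one has to be a little careful is the justification of the continuity of $\OT(\mu,\cdot,c)$ at the boundary of the simplex, which is handled cleanly by the dual representation above.
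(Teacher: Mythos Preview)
Your approach is correct in spirit and genuinely different from the paper's. The paper argues via the first-order optimality condition of Proposition~\ref{prop:NecessarySufficient}: it sets $m^n := -k^n - \nabla\mathcal{E}[\nu^n] \in \partial_\nu\oOT(\mu,\nu^n,c)$, uses $\mathcal{E}\in\mathcal{C}^1$ to pass to the limit $m^n\to m$, and then invokes upper semicontinuity of the subdifferential of the lower semicontinuous convex function $\oOT(\mu,\cdot,c)$ to conclude $m\in\partial_\nu\oOT(\mu,\nu,c)$. Your route---joint continuity of $J$ and passing to the limit in the inequality $J(\nu_n,k_n)\le J(\eta,k_n)$---is more elementary; in particular it uses neither the convexity nor the differentiability of $\mathcal{E}$, both of which the paper's subdifferential argument needs.

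There is, however, a concrete gap in your justification of the continuity of $\nu\mapsto\OT(\mu,\nu,c)$. The dual feasible set $\{(\varphi,\psi):\varphi_1=0,\ \varphi_i+\psi_j\le c_{ij}\ \forall i,j\}$ is \emph{not} compact: with $n_\Xcal=n_\Ycal=2$ and $c\equiv 0$, the points $\varphi=(0,M)$, $\psi=(-M,-M)$ are feasible for every $M>0$. Fixing $\varphi_1=0$ kills only the one-dimensional translation invariance; the polyhedron still has recession directions, so Berge's theorem does not apply as stated. To repair this you must restrict further to a compact set on which the maximum is still attained for every $\nu\in\Delta_{n_\Ycal}$, e.g.\ to $c$-conjugate pairs $\psi_j=\min_i(c_{ij}-\varphi_i)$, $\varphi_i=\min_j(c_{ij}-\psi_j)$ together with $\varphi_1=0$, which is easily seen to be bounded. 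Alternatively, and perhaps more cleanly in this finite setting, invoke the standard LP fact that the optimal value is a polyhedral convex function of the right-hand side and hence continuous on its effective domain; since $\Pi(\mu,\nu)$ is nonempty and bounded for every $\nu\in\Delta_{n_\Ycal}$, this gives continuity of $\OT(\mu,\cdot,c)$ on the whole simplex, including the boundary. With this correction your argument is complete.
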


\begin{proof}
In order to show that $\{(k, \nu): \nu \in \text{BR}(k)\}$ is closed, it suffices to prove that, for any sequence $(k^n)_n$ that converges to $k$ and any sequence $(\nu^n)_n$ that converges to $\nu$, with $\nu^n \in \text{BR}(k^n)$, we have that $\nu \in \text{BR}(k)$. By Proposition~\ref{prop:NecessarySufficient}, the values $m^n := -k^n - \nabla \mathcal{E}[\nu^n]$ satisfy $m^n \in \partial_\nu \oOT(\mu,\nu^n, c)$. Since $(k^n)_n$ and $(\nu^n)_n$ are converging sequences and $\mathcal{E} \in \mathcal{C}^1$, we obtain that $m^n$ converges towards $m:=-k-\nabla \mathcal{E}[\nu]$. Since the map $\OT(\mu, \cdot, c)$ is lower semicontinuous (see \cite[p.13]{acciaio2021cournot}), the subdifferential $\partial_\nu \oOT(\mu, \cdot, c)$ is upper semicontinuous (see \cite[p. 55]{VillaniTopics}). Thus, $m \in \partial_\nu \oOT(\mu, \nu, c)$. This is equivalent to
\[
0 \in \partial_\nu \oOT(\mu, \nu, c) + k + \nabla \mathcal{E}[\nu],
\] 
which shows that $\nu$ is indeed the optimal response of the population to the vector of costs $k$, i.e. $\nu \in \text{BR}(k)$.
\end{proof}

\begin{prop}
Let $\mathcal{E}$ be convex and $K$ be compact. Then a SCNE exists.
\end{prop}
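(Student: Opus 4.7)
The plan is to reduce the existence of an SCNE to minimizing the continuous function $G$ over a compact set, namely the graph of the best-response correspondence $\mathrm{BR}$, and then lift the optimizer back to a coupling via Theorem~\ref{thm.510}.

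First I would check that $\mathrm{BR}(k)\neq\emptyset$ for every $k\in K$. The functional $\nu\mapsto \oOT(\mu,\nu,c)+k\cdot\nu+\mathcal{E}[\nu]$ is lower semicontinuous (using that $\OT(\mu,\cdot,c)$ is l.s.c., cf.\ \cite{acciaio2021cournot}, and that $k\cdot\nu+\mathcal{E}[\nu]$ is continuous), and $\Pcal(\Ycal)\subseteq\RR^{n_\Ycal}$ is compact as a finite-dimensional simplex. Hence the infimum in \eqref{eq:VariationalProblem} is attained, so $\mathrm{BR}(k)\neq\emptyset$.

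Next I would establish compactness of the graph $\mathrm{Gr}(\mathrm{BR}):=\{(k,\nu)\in K\times\Pcal(\Ycal):\nu\in\mathrm{BR}(k)\}$. Since $K\times\Pcal(\Ycal)$ is compact (product of compacts) and, by Theorem~\ref{thm:BRContinuous}, $\mathrm{Gr}(\mathrm{BR})$ is closed, it is a compact subset of $K\times\Pcal(\Ycal)$. By the previous step it is also non-empty.

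Now I would apply the extreme value theorem: the cost of the principal $(k,\nu)\mapsto G(\nu,k)$ is continuous on $\mathrm{Gr}(\mathrm{BR})$, which is compact and non-empty, so it attains its minimum at some $(k^*,\nu^*)\in\mathrm{Gr}(\mathrm{BR})$. By definition of $\mathrm{BR}$ and Proposition~\ref{thm:PopulationGame}, $\nu^*$ minimizes \eqref{eq:VariationalProblem} for $k^*$, and by Theorem~\ref{thm.510} there exists a primal optimizer $\gamma^{k^*}\in\Pi(\mu,\nu^*)$ of $\OT(\mu,\nu^*,c)$. Proposition~\ref{thm:PopulationGame} then guarantees that $\gamma^{k^*}$ is a CNE w.r.t.\ $k^*$, giving condition (i) of Definition~\ref{def.SCNE}. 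Condition (ii) is immediate from the choice of $(k^*,\nu^*)$: for any $k\in K$ and any CNE $\gamma^k$ w.r.t.\ $k$, the measure $\nu^k=p_2\#\gamma^k$ lies in $\mathrm{BR}(k)$ by Proposition~\ref{thm:PopulationGame}, so $(k,\nu^k)\in\mathrm{Gr}(\mathrm{BR})$ and therefore $G(\nu^{k^*},k^*)\le G(\nu^k,k)$.

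I do not expect any serious obstacle here; the result is essentially a Weierstrass-type argument, and the only non-trivial ingredient is the closed-graph property already proved in Theorem~\ref{thm:BRContinuous}. The main point to verify carefully is the interplay between the best-response correspondence (which acts on distributions) and the CNE (which is formulated at the level of couplings), which is exactly bridged by Proposition~\ref{thm:PopulationGame} together with existence of primal optimizers from Theorem~\ref{thm.510}.
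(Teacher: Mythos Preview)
Your proof is correct and follows essentially the same approach as the paper: both rely on Theorem~\ref{thm:BRContinuous} (closed graph of $\mathrm{BR}$) together with compactness of $K\times\Pcal(\Ycal)$ to conclude that the infimum of $G$ over $\{(k,\nu):k\in K,\ \nu\in\mathrm{BR}(k)\}$ is attained. Your version is in fact more detailed than the paper's two-line proof, since you explicitly verify non-emptiness of $\mathrm{BR}(k)$ and spell out the passage from the minimizing pair $(k^*,\nu^*)$ back to a coupling $\gamma^{k^*}$ via Proposition~\ref{thm:PopulationGame} and Theorem~\ref{thm.510}.
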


\begin{proof}
The statement follows directly from Theorem~\ref{thm:BRContinuous}, since the latter implies that the infimum
\[ 
\inf_{\{(k,\nu): k \in K, \nu \in \text{BR}(k)\}} G(\nu,k)
\] 
is attained.
\end{proof}

\begin{remark}
We highlight that this result crucially relies on the link of optimal transport and the Cournot-Nash equilibria. Indeed, standard techniques from game theory yield only an existence result and only for the game without the principal. However, for our proof the characterization of the equilibria as solutions to a variational problem is a cornerstone of the analysis. \hfill$\diamond$
\end{remark}

As a next step we provide a helpful reformulation of the optimization problem for the principal.

\begin{theorem}
\label{thm:OptimizationPrincipal}
Assume that $\mathcal{E}$ is convex. Then it holds that
\[
\inf_{k \in K} \inf_{\nu \in \text{BR}(k)} G(\nu,k) = \inf_{\nu \in \Pcal(\Ycal)} \inf_{k \in \left(-\partial_\nu\OT(\mu,\nu, c) - \nabla \mathcal{E}(\nu) \right) \cap K} G(\nu,k),
\] 
with the convention that $\inf \emptyset = \infty$.
\end{theorem}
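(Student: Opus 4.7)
The strategy is to recognize that the identity is, at its heart, just an exchange of the order of the two infima over the same subset of $K \times \Pcal(\Ycal)$, once we reparametrize the feasible set of best responses in terms of the subdifferential.

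\emph{Step 1: Reformulating the best-response set.} First I would invoke Proposition~\ref{prop:NecessarySufficient}. Since $\mathcal{E}$ is convex, it tells us that for each $k \in K$,
\[
\nu \in \text{BR}(k) \iff \nu \in \Pcal(\Ycal) \text{ and } 0 \in \partial_\nu \OT(\mu,\nu,c) + k + \nabla \mathcal{E}[\nu],
\]
which in turn is equivalent to
\[
\nu \in \Pcal(\Ycal) \text{ and } k \in -\partial_\nu \OT(\mu,\nu,c) - \nabla \mathcal{E}[\nu].
\]
Consequently, the graph of the correspondence $k \mapsto \text{BR}(k)$ intersected with $K \times \Pcal(\Ycal)$ equals
\[
\Sigma := \bigl\{(k,\nu) \in K\times\Pcal(\Ycal) : k \in \bigl(-\partial_\nu \OT(\mu,\nu,c) - \nabla \mathcal{E}[\nu]\bigr) \cap K\bigr\}.
\]

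\emph{Step 2: Exchanging the infima.} Both sides of the claimed equality can be written as $\inf_{(k,\nu) \in \Sigma} G(\nu,k)$, with the convention $\inf \emptyset = \infty$ on both sides. Indeed, the left-hand side is
\[
\inf_{k\in K} \inf_{\nu \in \text{BR}(k)} G(\nu,k) = \inf_{(k,\nu)\in \Sigma} G(\nu,k),
\]
while the right-hand side is
\[
\inf_{\nu\in \Pcal(\Ycal)} \inf_{k \in (-\partial_\nu \OT(\mu,\nu,c) - \nabla\mathcal{E}[\nu])\cap K} G(\nu,k) = \inf_{(k,\nu) \in \Sigma} G(\nu,k),
\]
by Step 1. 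The two infima are therefore equal.

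\emph{Obstacles.} There is essentially no obstacle here: the statement is a direct consequence of Proposition~\ref{prop:NecessarySufficient}, which does the real analytic work by identifying best responses with zeros of the subdifferential inclusion. The only minor subtlety is to be careful that the convention $\inf\emptyset = \infty$ is consistent on both sides, and that we restrict attention to $\nu\in \Pcal(\Ycal)$ (so that $\partial_\nu \OT(\mu,\nu,c)$ is non-empty and coincides with $\partial_\nu \oOT(\mu,\nu,c)$, as noted before Proposition~\ref{prop:NecessarySufficient}). No compactness or continuity arguments are needed: this is a purely set-theoretic reindexation, making the proof a short one or two lines once Step 1 is stated.
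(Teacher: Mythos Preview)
Your proposal is correct and follows essentially the same approach as the paper. The paper's proof invokes \cite[Theorem 23.5]{Rockafellar} directly to establish that $k \in -\partial_\nu \OT(\mu,\nu,c) - \nabla\mathcal{E}[\nu]$ is equivalent to $\nu$ minimizing the variational problem~\eqref{eq:VariationalProblem}, while you invoke Proposition~\ref{prop:NecessarySufficient} (which packages the same fact); after that, both arguments reduce to the observation that the two iterated infima range over the same set $\Sigma$.
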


\begin{proof}
By \cite[Theorem 23.5]{Rockafellar}, $k \in -\partial_\nu \OT(\mu, \nu, c) -\nabla \mathcal{E}[\nu]=-\partial_\nu \oOT(\mu, \nu, c) -\nabla \mathcal{E}[\nu]$ is equivalent to the fact that the function 
\[
\tilde{\nu} \mapsto k\cdot \tilde{\nu} + \oOT(\mu, \tilde{\nu}, c) + \mathcal{E}[\tilde{\nu}]
\] 
achieves its infimum for $\tilde{\nu}=\nu$.
\end{proof}

An important class of models are those where the reward of the principal reads $G(\nu,k)=G(\nu)$ for some function $G: \Delta_{n_{\Ycal}} \rightarrow \mathbb{R}$. 
This is reasonable if the costs of the agents are measured in terms of utility and the principal can influence these costs in such a way that it is costless for itself, see Example~\ref{ex:SCNE}.

For this class of models, we obtain an existence result that even yields the possibility to compute CNEs.

\begin{corollary}
\label{cor:SCNEex}
Assume that $\mathcal{E}$ is convex.
Let $G(\nu,k)=G(\nu)$ for all $\nu \in \Delta_{n_{\Ycal}}$, $k \in K$. 
Assume that $\nu^\ast$ is a minimizer of $G$ and that $\gamma^\ast$ is an optimizer for $\OT(\mu,\nu^\ast, c)$.
Then, for any $k^\ast \in \left(-\partial_\nu\OT(\mu,\nu^\ast,c) - \nabla \mathcal{E}(\nu^\ast) \right) \cap K$, the pair $(k^\ast, \gamma^\ast)$ is a SCNE.
In particular, if $\nu^\ast \in \text{ri}(\Delta_{n_{\Ycal}})$ and $K=\mathbb{R}^{n_\Ycal}$, then a SCNE exists.
\end{corollary}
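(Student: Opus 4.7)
The plan is to invoke Theorem~\ref{thm:OptimizationPrincipal} to reformulate the principal's problem, verify that the candidate $(\gamma^\ast,k^\ast)$ meets both requirements of Definition~\ref{def.SCNE}, and then handle the ``in particular'' clause via compactness together with the existence of a subgradient at $\nu^\ast$.

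The reformulation is immediate: since $G(\nu,k)=G(\nu)$ does not depend on $k$, Theorem~\ref{thm:OptimizationPrincipal} turns the principal's problem into
\[
\inf\bigl\{G(\nu) : \nu\in\Pcal(\Ycal),\ \bigl(-\partial_\nu\OT(\mu,\nu,c)-\nabla\mathcal{E}(\nu)\bigr)\cap K\neq\emptyset\bigr\}.
\]
By hypothesis, for $\nu=\nu^\ast$ this intersection contains $k^\ast$, so $\nu^\ast$ is admissible; and since $\nu^\ast$ minimizes $G$ over the whole $\Delta_{n_\Ycal}$, it is a fortiori optimal here. Next I would check Definition~\ref{def.SCNE} directly. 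The membership $k^\ast\in-\partial_\nu\OT(\mu,\nu^\ast,c)-\nabla\mathcal{E}(\nu^\ast)$ is exactly $0\in\partial_\nu\OT(\mu,\nu^\ast,c)+k^\ast+\nabla\mathcal{E}(\nu^\ast)$, so by Proposition~\ref{prop:NecessarySufficient} $\nu^\ast$ solves \eqref{eq:VariationalProblem} at $k=k^\ast$. Together with the assumption that $\gamma^\ast$ is a primal optimizer for $\OT(\mu,\nu^\ast,c)$, Proposition~\ref{thm:PopulationGame} yields condition (i). Condition (ii) is immediate: for any $k\in K$ and any CNE $\gamma^k$ w.r.t.\ $k$ with marginal $\nu^k$, we have $G(\nu^{k^\ast},k^\ast)=G(\nu^\ast)\le G(\nu^k)=G(\nu^k,k)$ by global optimality of $\nu^\ast$.

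For the ``in particular'' clause, continuity of $G$ and compactness of $\Delta_{n_\Ycal}$ produce a minimizer $\nu^\ast$. Under $\nu^\ast\in\mathrm{ri}(\Delta_{n_\Ycal})$ and $K=\RR^{n_\Ycal}$, applying the first part reduces to producing an element of $\partial_\nu\OT(\mu,\nu^\ast,c)$; this is the step I expect to be the main obstacle. I would handle it through duality: Theorem~\ref{thm.510} provides a dual optimizer $(\varphi^\ast,\psi^\ast)$ for $\OT(\mu,\nu^\ast,c)$, and feasibility combined with the usual integration against an arbitrary coupling in $\Pi(\mu,\nu)$ gives
\[
\mu\cdot\varphi^\ast+\nu\cdot\psi^\ast\le\OT(\mu,\nu,c)\quad\text{for every } \nu\in\Pcal(\Ycal),
\]
with equality at $\nu=\nu^\ast$ by strong duality. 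Subtracting the two yields $\psi^\ast\in\partial_\nu\OT(\mu,\nu^\ast,c)$; the relative interior assumption on $\nu^\ast$ places us in the standard setting in which proper convex functions on $\Pcal(\Ycal)$ are guaranteed to be subdifferentiable, ruling out any pathology at the boundary of the simplex. Setting $k^\ast:=-\psi^\ast-\nabla\mathcal{E}(\nu^\ast)\in\RR^{n_\Ycal}=K$ and invoking the first part then gives a SCNE $(\gamma^\ast,k^\ast)$.
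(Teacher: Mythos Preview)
Your argument is correct and, for the first claim, essentially identical to the paper's: both invoke Theorem~\ref{thm:OptimizationPrincipal} to rewrite the principal's problem as minimizing $G(\nu)$ over those $\nu$ for which $\bigl(-\partial_\nu\OT(\mu,\nu,c)-\nabla\mathcal{E}(\nu)\bigr)\cap K\neq\emptyset$, and then observe that the global minimizer $\nu^\ast$ is admissible by hypothesis. Your additional explicit check of Definition~\ref{def.SCNE}(i)--(ii) via Propositions~\ref{prop:NecessarySufficient} and~\ref{thm:PopulationGame} spells out what the paper leaves implicit in its appeal to Theorem~\ref{thm:OptimizationPrincipal}. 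One small redundancy: in the ``in particular'' clause you argue for the existence of a minimizer $\nu^\ast$ via compactness, but $\nu^\ast$ is already part of the hypotheses of the corollary.

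The genuine difference is in how you produce an element of $\partial_\nu\OT(\mu,\nu^\ast,c)$. The paper simply cites Rockafellar's theorem that a proper closed convex function is subdifferentiable on the relative interior of its domain, applied to $\oOT(\mu,\cdot,c)$. You instead construct a subgradient directly: take a dual optimizer $(\varphi^\ast,\psi^\ast)$, integrate the feasibility constraint against any coupling to get $\mu\cdot\varphi^\ast+\nu\cdot\psi^\ast\le\OT(\mu,\nu,c)$ for all $\nu$, and compare with the equality at $\nu^\ast$. This is exactly the easy direction of Theorem~\ref{thm.subdiffOT}, so you could equally well just cite that result. Your route is more self-contained (no external convex-analysis reference) and makes transparent that the subgradient is a dual potential; the paper's route is shorter. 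Note also that in your argument the relative-interior assumption is only used to ensure that $\nu^\ast$ has full support on $\Ycal$, so that the framework of Section~\ref{sect.fOT} (and hence Theorem~\ref{thm.510}) applies verbatim; your closing remark about ``ruling out any pathology at the boundary'' is correct but could be stated more precisely in those terms.
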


\begin{proof}
For the first part of the claim, by Theorem~\ref{thm:OptimizationPrincipal} it suffices to show that $(k^\ast, \gamma^\ast)$ is an optimizer of
\[\inf_{\nu \in \Pcal(\Ycal)} \inf_{k \in \left(-\partial_\nu\OT(\mu,\nu, c) - \nabla \mathcal{E}(\nu) \right) \cap K} G(\nu).\] 
Since the function $G$ does not depend on $k$, this means (recall the convention $\inf \emptyset =\infty$) that the optimizer of $G(\nu)$ over all $\nu \in \Pcal(\Ycal)$ such that $\left(-\partial_\nu\OT(\mu,\nu, c) - \nabla \mathcal{E}(\nu) \right) \cap K$ is non-empty. By assumption, $\nu^\ast$ is such a minimizer and $k^\ast$ is one minimizer for the inner infimum.  The second part of the claim follows since $\oOT$ is a proper closed convex function and such functions are subdifferentiable in the relative interior of the domain $\Delta_{n_{\Ycal}}$; see \cite[Theorem 23.4]{Rockafellar}.
\end{proof}

\subsection{The Subdifferential of $\OT$.}
Given the reformulation of the principal's problem in Theorem~\ref{thm:OptimizationPrincipal}, understanding the subdifferential of the optimal transport problem turns out to be an essential step 
in order to solve the SCNE problem. We will see in Theorem~\ref{thm.subdiffOT} below that this is tightly related to the optimizers of the dual problem.

Recall that the so-called $c$-transform of the function $k$, denoted by $k^c$, is given by
\[
k^c_i:=\min_{j\leq n_\Ycal}c_{ij}-k_j,\quad i=1,\ldots,n_\Xcal.
\]
This clearly satisfies the constraint
\[
k^c_i+k_j\leq c_{ij}\quad \forall i\leq n_\Xcal, j\leq n_\Ycal,
\]
so that the pair $(k^c,k)$ is feasible for the dual problem $\DOT(\mu,\nu,c)$. 

\begin{theorem}\label{thm.subdiffOT}
Let $\nu \in \Delta_{n_{\Ycal}}$.
Then $k\in K$ satisfies $k \in \partial_\nu \OT(\mu,\nu, c)$ if and only if $(k^c,k)$ is an optimizer for $\DOT(\mu,\nu, c)$, i.e.
\[
k^c\cdot\mu+k\cdot\nu=\max\left\{\varphi\cdot\mu+\psi\cdot\nu: \varphi\in\RR^{n_\Xcal},\psi\in\RR^{n_\Ycal}, \varphi_i+\psi_j\leq c_{ij}\ \forall i\leq n_\Xcal, j\leq n_\Ycal\right\}.
\]
\end{theorem}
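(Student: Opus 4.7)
The plan hinges on a single identity:
\[
\inf_{\eta \in \Pcal(\Ycal)}\big\{\OT(\mu,\eta,c) - k\cdot\eta\big\} = k^c\cdot\mu.
\]
Once this is in hand, both implications fall out almost immediately from the definition of the subdifferential together with the duality $\OT(\mu,\nu,c)=\DOT(\mu,\nu,c)$ of Theorem~\ref{thm.510}-(i).

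To establish the identity I would unfold $\OT(\mu,\eta,c) = \inf_{\gamma\in\Pi(\mu,\eta)} c\cdot\gamma$ and swap the outer infimum in $\eta$ with the inner infimum in $\gamma$. Since the family $\{\Pi(\mu,\eta):\eta\in\Pcal(\Ycal)\}$ partitions $\Pi(\mu,\cdot)$, the swap yields
\[
\inf_{\eta\in\Pcal(\Ycal)}\big\{\OT(\mu,\eta,c)-k\cdot\eta\big\}=\inf_{\gamma\in\Pi(\mu,\cdot)}\sum_{i,j}(c_{ij}-k_j)\gamma_{ij}.
\]
This problem decouples across rows of $\gamma$: for each $i$ one concentrates the mass $\mu_i$ on any $j$ that minimizes $c_{ij}-k_j$, giving the value $\sum_i\mu_i\min_j(c_{ij}-k_j) = k^c\cdot\mu$.

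For the ``if'' direction, assume $(k^c,k)$ is optimal for $\DOT(\mu,\nu,c)$, so that $\OT(\mu,\nu,c)=k^c\cdot\mu+k\cdot\nu$. The dual feasibility condition $k^c_i+k_j\leq c_{ij}$ does not involve $\nu$, so $(k^c,k)$ remains feasible for $\DOT(\mu,\eta,c)$ at every $\eta\in\Pcal(\Ycal)$. Weak duality then gives $\OT(\mu,\eta,c)\geq k^c\cdot\mu+k\cdot\eta$, which rearranges to the subgradient inequality
\[
\OT(\mu,\eta,c)-k\cdot\eta \;\geq\; k^c\cdot\mu \;=\;\OT(\mu,\nu,c)-k\cdot\nu.
\]

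For the ``only if'' direction, the subgradient condition is precisely the statement that $\nu$ attains the infimum in the key identity, hence $\OT(\mu,\nu,c) - k\cdot\nu = k^c\cdot\mu$, equivalently $\OT(\mu,\nu,c)=k^c\cdot\mu+k\cdot\nu$. Together with feasibility of $(k^c,k)$ and Theorem~\ref{thm.510}-(i), this says that $(k^c,k)$ attains the supremum in $\DOT(\mu,\nu,c)$. I do not foresee a genuine obstacle: the only step needing a short justification is the interchange of the two infima, which is routine once one notes the partition property of $\Pi(\mu,\cdot)$.
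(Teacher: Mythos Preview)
Your proposal is correct and follows essentially the same route as the paper: both arguments rest on weak duality together with the observation that $\inf_{\gamma\in\Pi(\mu,\cdot)}\sum_{i,j}(c_{ij}-k_j)\gamma_{ij}=k^c\cdot\mu$, obtained by placing the mass $\mu_i$ on a $j$ minimizing $c_{ij}-k_j$. The only difference is organizational---you isolate this computation as a standalone identity and then read off both implications, whereas the paper carries out the same construction inline within the ``only if'' direction.
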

We remark that this result has been proved for general probability measures supported on a compact subset of $\mathbb{R}^d$ in \cite[Proposition 7.17]{santambrogio2015optimal}. Here we present a simpler proof for our discrete setting.

\begin{proof}
Let us first fix $k \in \partial_\nu \OT(\mu,\nu,c)$. Then we have for any $\eta \in \Delta_{n_{\Ycal}}$ and any $\gamma \in \Pi(\mu,\eta)$ that
\begin{equation}
\label{eq:SubgradientKantorovichProof}
\OT(\mu,\nu, c) - k \cdot \nu \le \OT(\mu, \eta, c) - k\cdot \eta \le c \cdot\gamma - k\cdot \eta. 
\end{equation} 
Now, for any $i\leq n_\Xcal$, choose $j^{(i)}$ such that 
\[
j^{(i)} \in \text{argmin}_{j\leq n_\Ycal} \{c_{ij}-k_j\}
\] 
and set $\gamma\in\Pi(\mu,\cdot)$ via
\[
\gamma_{ij} = \begin{cases}
0, & \text{for $j \neq j^{(i)}$} \\
\mu_i, & \text{for $j = j^{(i)}$}.
\end{cases}
\] 
Letting $\eta=p_2\#\gamma$, by \eqref{eq:SubgradientKantorovichProof} we get
\begin{align*}
\OT(\mu,\nu, c) - k\cdot \nu &\le c \cdot\gamma - k\cdot \eta = 
\sum_{i \leq n_\Xcal} c_{ij^{(i)}} \mu_i - \sum_{j \leq n_\Ycal} k_j \sum_{i \leq n_\Xcal: j^{(i)}=j} \mu_i \\
&= \sum_{i \leq n_\Xcal} c_{ij^{(i)}} \mu_i - \sum_{i \leq n_\Xcal} k_{j^{(i)}} \mu_i =
\sum_{i \leq n_\Xcal} \left( c_{ij^{(i)}}-k_{j^{(i)}}\right) \mu_i \\
&= \sum_{i \leq n_\Xcal} k^c_i \mu_i = k^c \cdot \mu.
\end{align*} 
This implies
$\OT(\mu, \nu, c) \le k\cdot \nu + k^c\cdot  \mu$.
Since $\OT(\mu, \nu, c) \ge k\cdot \nu + k^c\cdot \mu$ is true by the Kantorovich duality, equality follows, thus $(k^c,k)$ is an optimizer for $\DOT(\mu,\nu, c)$.

To show the converse implication, we now assume that $(k^c,k)$ is an optimizer for $\DOT(\mu,\nu, c)$. This yields $\OT(\mu, \nu, c) -k\cdot \nu = k^c \cdot \mu$. On the other hand, for any $\eta \in \Delta_{n_{\Ycal}}$, we have  $\OT(\mu, \eta, c) \ge k\cdot \eta + k^c\cdot \mu$ by the Kantorovich duality. Hence, we obtain
\[
\OT(\mu, \eta, c) - k\cdot \eta \ge   k^c\cdot \mu =\OT(\mu, \nu, c) - k\cdot \nu,\quad \text{for any $\eta \in \Delta_{n_{\Ycal}}$}.
\] 
Thus $k$ lies in the subdifferential $\partial_\nu \OT(\mu,\nu,c)$.
\end{proof}

\begin{remark}
    Theorem~\ref{thm.subdiffOT} allows us to formulate a more general version of Corollary~\ref{cor:SCNEex}. Namely, we obtain that, when $\mathcal{E}$ is convex, $G(\nu,k) = G(\nu)$ for all $\nu \in \Delta_{n_{\Ycal}}$, $k \in K$, $\nu^\ast \in \text{ri}(\Delta_{n_\Ycal})$, and $K = \mathbb{R}{n_\Ycal}_+$, then a SCNE exists. Indeed, by \cite[Theorem 23.4]{Rockafellar}, an element $k \in -\partial_\nu\OT(\mu,\nu^\ast, c) - \nabla \mathcal{E}(\nu^\ast)$ exists. Since $k + c\cdot 1 \in \partial_\nu\OT(\mu,\nu^\ast, c) - \nabla \mathcal{E}(\nu^\ast)$ for all $c \in \mathbb{R}$, we can find $c\in \mathbb{R}$ such that $k + c \cdot 1 \in \mathbb{R}_+^{n_\Ycal}$. Hence, $(\nu^\ast, k+c \cdot 1)$ is indeed a SCNE by Theorem~\ref{thm:OptimizationPrincipal}.
\end{remark}

\subsection{Approximation Result.}
This section is devoted to the discussion of approximation results for the principal and the agents' problems. We will show that, under some assumptions, up to an error that can be made as small as wanted, both principal and agents can compute (very efficiently) regularized transport problems rather than the original ones; see Remark~\ref{rem:approx}. 

\begin{prop}\label{prop:approx_pr}
Let $K$ be closed and let $\mathcal{E}$ be strictly convex.
Assume that  $G(\nu, k) = G(\nu)$ 
is a continuous function. Let $\nu^\ast$ be a minimizer of $G$.
Moreover, assume that $k^\epsilon \in \left(-\partial_\nu\OT^\epsilon(\mu, \nu^\ast)- \nabla \mathcal{E}[\nu^\ast]\right) \cap K$ with $k^\epsilon (x_0) = 0$ for all $\epsilon>0$. Then there is $k^\ast\in K$ s.t. $\nu^\ast = \text{BR}(k^\ast)$ and
$k^\epsilon \rightarrow k^\ast$. 
Moreover, we have
\[
G(\text{BR}(k^\epsilon)) \rightarrow G(\text{BR}(k^\ast)).
\] 
\end{prop}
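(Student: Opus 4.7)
The approach is to identify the sequence $(k^\epsilon)$, up to a constant translation, with the unique dual optimizers of the entropic problems $\DOT^\epsilon(\mu, \nu^\ast)$, and then invoke Theorem~\ref{thm:LimitEntropic} to pass to the limit, with continuity of $\text{BR}$ from Theorem~\ref{thm:BRContinuous} closing the argument.

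\emph{Step 1 (identification).} First I would show that $\partial_\nu \oOT^\epsilon(\mu, \nu^\ast)$ is the affine line $\{\psi^\epsilon + c \mathbf{1} : c \in \RR\}$, where $(\varphi^\epsilon, \psi^\epsilon)$ is the unique dual optimizer of $\DOT^\epsilon(\mu, \nu^\ast)$. This follows from the smoothness and strict concavity of the entropic dual, together with the fact that $\nu^\ast$ lies on the simplex (so that adding $c\mathbf{1}$ to $\psi$ leaves $\psi \cdot \nu$ unchanged up to a constant independent of $\nu$). The hypothesis $k^\epsilon \in (-\partial_\nu \OT^\epsilon(\mu, \nu^\ast) - \nabla \mathcal{E}[\nu^\ast]) \cap K$, combined with the normalization condition, pins down $k^\epsilon$ uniquely in the form $k^\epsilon = -\psi^\epsilon - \nabla \mathcal{E}[\nu^\ast] + a^\epsilon \mathbf{1}$ for a specific constant $a^\epsilon \in \RR$.

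\emph{Step 2 (passing to the limit).} By Theorem~\ref{thm:LimitEntropic}, $(\varphi^\epsilon, \psi^\epsilon) \to (\varphi^\ast, \psi^\ast)$ as $\epsilon \to 0$, where $(\varphi^\ast, \psi^\ast)$ is the centroid dual optimizer of $\DOT(\mu, \nu^\ast, c)$. Since $\psi^\epsilon \to \psi^\ast$ and $a^\epsilon$ is a continuous function of $\psi^\epsilon$ via the normalization, we get $a^\epsilon \to a^\ast$ and therefore $k^\epsilon \to k^\ast := -\psi^\ast - \nabla \mathcal{E}[\nu^\ast] + a^\ast \mathbf{1}$. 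Closedness of $K$ gives $k^\ast \in K$. By Theorem~\ref{thm.subdiffOT}, $\psi^\ast \in \partial_\nu \OT(\mu, \nu^\ast, c)$, and translation invariance of this subdifferential by $\mathbf{1}$ yields $-k^\ast - \nabla \mathcal{E}[\nu^\ast] \in \partial_\nu \OT(\mu, \nu^\ast, c)$. Proposition~\ref{prop:NecessarySufficient}, combined with strict convexity of $\mathcal{E}$ giving uniqueness of the best response, then gives $\nu^\ast = \text{BR}(k^\ast)$.

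\emph{Step 3 (continuity of $G \circ \text{BR}$).} Strict convexity of $\mathcal{E}$ makes $\text{BR}$ single-valued, and Theorem~\ref{thm:BRContinuous} ensures it has a closed graph. Since the codomain $\Delta_{n_\Ycal}$ is compact, a single-valued closed-graph map into a compact space is continuous. Hence $\text{BR}(k^\epsilon) \to \text{BR}(k^\ast) = \nu^\ast$, and continuity of $G$ gives the desired $G(\text{BR}(k^\epsilon)) \to G(\text{BR}(k^\ast))$.

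The main obstacle is Step 1: precisely characterizing $\partial_\nu \oOT^\epsilon(\mu, \nu^\ast)$ as the translation orbit of the entropic dual optimizer, and aligning the normalization convention imposed on $k^\epsilon$ with the one used in Theorem~\ref{thm:LimitEntropic}, so that the constants $a^\epsilon$ converge and the limit $k^\ast$ is well-defined as a single element of $K$ rather than an affine class.
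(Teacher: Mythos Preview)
Your proposal is correct and follows essentially the same route as the paper: invoke Theorem~\ref{thm:LimitEntropic} for the convergence of the entropic dual optimizers, use Theorem~\ref{thm.subdiffOT} to land in $-\partial_\nu\OT(\mu,\nu^\ast,c)-\nabla\mathcal{E}[\nu^\ast]$, apply closedness of $K$ and Proposition~\ref{prop:NecessarySufficient} to obtain $\nu^\ast=\text{BR}(k^\ast)$, and finish via the closed-graph property of $\text{BR}$ (Theorem~\ref{thm:BRContinuous}) together with continuity of $G$. Your Steps~1 and~3 are in fact more explicit than the paper's own argument: the paper simply asserts ``by Theorem~\ref{thm:LimitEntropic} and Theorem~\ref{thm.subdiffOT}, $k^\epsilon\to k^\ast$'' without spelling out the identification of $\partial_\nu\OT^\epsilon$ with the translation orbit of $\psi^\epsilon$, and it does not isolate the closed-graph-plus-compact-codomain reasoning that upgrades the closed graph of $\text{BR}$ to continuity.
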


\begin{proof}
We first note that, since $\mathcal{E}$ is strictly convex, the map $\text{BR}$ is a function.
By Theorem~\ref{thm:LimitEntropic} and Theorem~\ref{thm.subdiffOT}, $k^\epsilon \rightarrow k^\ast$ with $k^\ast \in -\partial_\nu\OT(\mu, \nu^\ast)- \nabla \mathcal{E}[\nu^\ast]$. Since $K$ is closed, $k^\ast\in K$. Moreover,  $\nu^\ast = \text{BR}(k^\ast)$ by Proposition~\ref{prop:NecessarySufficient}. Finally, by Theorem~\ref{thm:BRContinuous} the map $\text{BR}$ has a closed graph, so the claim follows from continuity of $G$.
\end{proof}

\begin{remark}\label{rem:approx}
When the set $K$ of cost vectors and the principal's cost function $G$ are as in Proposition~\ref{prop:approx_pr}, the principal can look for a cost vector $k^\epsilon$ in $\left(-\partial_\nu\OT^\epsilon(\mu, \nu^\ast)- \nabla \mathcal{E}[\nu^\ast]\right) \cap K$ rather than in $\left(-\partial_\nu\OT(\mu, \nu^\ast)- \nabla \mathcal{E}[\nu^\ast]\right) \cap K$, for $\eps$ small enough so that $G(\text{BR}(k^\epsilon))$ is as close as wanted to the optimal value $G(\text{BR}(k^\ast))$. The reason for doing this is the fact that computing $\partial_\nu\OT^\epsilon(\mu, \nu^\ast)$ is more efficient than computing $\partial_\nu\OT(\mu, \nu^\ast)$. Indeed, $\partial_\nu \OT^\epsilon(\mu,\nu^\ast)$ is the dual optimizer $\psi^\epsilon$ for the regularized problem, that can be approximated via the Sinkhorn algorithm (the scaling variables satisfy $(u,v)=(e^{\varphi^\epsilon/\epsilon)}, e^{\psi^\epsilon/\epsilon})$, see \cite[Propositions 4.4 and 4.6]{ComputationalOT}).

Note also that, by offering $k^\eps$ rather than $k^*$ to the agents, they also achieve a result as close as wanted to their optimal value, for $\eps$ small enough. This follows by Proposition~\ref{thm:PopulationGame} and Theorem~\ref{thm:BRContinuous}, 
since the optimal transport problem is stable w.r.t. its marginals, see \cite{ghosal2022stability}.
Furthermore, again for efficiency reasons, agents can as well decide to solve a regularized OT problem rather than the original one in Proposition~\ref{thm:PopulationGame}, and get as closed as wanted to their optimal value; see \cite{acciaio2021cournot}. \hfill$\diamond$
\end{remark}

For general functions $G(\nu,k)$, i.e. depending on the second variable as well, it might happen that considering only the entropic regularization yields to larger costs than necessary, since it might happen that, in the set of all dual optimizers, better candidates can be found. An illustration of this phenomenon is given in the next example.

\begin{example} 
Consider $\Xcal= \Ycal = \{1,2\}$, $\mu = \tfrac{1}{2} \delta_{\{1\}} + \tfrac{1}{2} \delta_{\{2\}}$, $c= 1_{\{(1,1), (2,2)\}} + 2 \cdot 1_{\{(1,2), (2,1)\}}$, $\mathcal{E} \equiv 0$, $K=\{(1,x):x \in \mathbb{R}\}$ and 
\[
G(\nu, k) = \nu_1^2 + \nu_2^2 + \left(\tfrac{1}{2} + k_2\right )^2,\quad \nu\in\Delta_{n_\Ycal}.
\] 
For any $\nu\in\Delta_{n_\Ycal}$, we denote by  $(\varphi^\ast(\nu),\psi^\ast(\nu))$ the optimizer for $\DOT(\mu, \nu, c)$ that is the unique limit of the dual entropic optimizers.
Note that, by Proposition~\ref{prop:unique}, for all $\nu\in\RR^2$ except $\hat\nu = (\tfrac{1}{2}, \tfrac{1}{2})$ there is a unique dual optimizer, that clearly coincides with $(\varphi^\ast(\nu),\psi^\ast(\nu))$.
This dual optimizer reads as
$\varphi^\ast(\nu)=(0,1)$, $\psi^\ast(\nu)=(1,2)$ if $\nu_1<\tfrac{1}{2}$, and $\varphi^\ast(\nu)=(0,1)$, $\psi^\ast(\nu)=(1,0)$ if $\nu_1>\tfrac{1}{2}$.
The set of all dual optimizers given $\hat\nu$ in $K$ is given by $\varphi = (0,x)$, $\psi = (1,1-x)$ with $x \in [-1,1]$. Hence, by Theorem~\ref{thm:LimitEntropic} we have that $\psi^\ast (\hat\nu) = (1,1)$.
Since $\nu_1^2+\nu_2^2\ge \tfrac{1}{2}$, we now obtain that
\begin{align*}
    G(\nu, -\psi^\ast(\nu)) \ge \begin{cases}
        \tfrac{1}{2} + \frac{9}{4}, &\text{if } \nu_1<\tfrac{1}{2} \\
        \tfrac{1}{2} + \frac{1}{4}, &\text{if } \nu_1=\tfrac{1}{2} \\
        \tfrac{1}{2} + \frac{1}{4}, &\text{if } \nu_1>\tfrac{1}{2}
    \end{cases}.
\end{align*} 
However, choosing $k=(-1, -\tfrac{1}{2})$ yields $G(\hat{\nu},k)=\tfrac{1}{2} < \tfrac{3}{4} \le G(\nu, \psi^\ast(\nu))$ for all $\nu \in\Delta_{n_\Ycal}$. Let us now assume that the principal chooses the vector of costs relying on regularized transport problems, i.e. chooses a pair $(\nu, -\psi^\epsilon(\nu))$. Then, for $\epsilon$ sufficiently small, 
the pair $(\nu,-\psi^\epsilon(\nu))$ is close to $(\nu, -\psi^\ast(\nu))$. Since $G$ is continuous, also the cost $G(\nu,k)$ will be close to the cost $G(\nu, \psi^\ast(\nu)) \ge \frac{3}{4}$. Hence, the cost will be substantially larger than $\tfrac{1}{2}=G(\hat{\nu},k)$. All in all, this shows that relying on regularized transport problems to compute the optimal vector of costs can lead to choices that are not close to the optimal ones.
\hfill $\diamond$
\end{example}

\subsection{Numerics.}\label{sect.num} 
In this section we come back to Example~\ref{ex:SCNE} and illustrate how the presence of a principal affects the choice of the agents. 
Here, we choose $\Xcal=\{1,2\}$, $\Ycal = \{1, \ldots, 10\}$, $f_j(x)=x^2$ for all $j \in \Ycal$, $(g(k))_{k \in \Ycal} = (2,1,0.5,0.25,0.1,0.05,0.02,0,0,0)$ and
\[
c= \begin{pmatrix}
    5&4&3&2&1&1&2&3 \\
    5&1&5&5&1&1&1&5
    \end{pmatrix}.
\]
Since $f_j(\nu_j)$ is differentiable, $g$ is non-negative and $2\sum_{k=1}^{ \lfloor n_\Ycal/2 \rfloor} g(k)< g(0)$, which implies that $(g(|a-j|))_{a,j \in \{1, \ldots, n_\Ycal\}}$ is positive definite, $\mathcal{E}[\nu]$ is strictly convex. Hence, we can compute equilibria for the game with and without a principal by applying the results established in the previous subsections. 

Let us first compute the Cournot-Nash equilibrium of the game without a principal, that is where the costs of the agents are given by \eqref{eq.totcost} with $k=0$. For this we note that, as described in Section~\ref{sec:game_continuum}, it suffices to find the minimizer $\nu^\ast$ of the convex optimization problem
\[
\inf_{\nu \in \mathcal{P}(\Ycal)} \{\OT(\mu,\nu,c) + \mathcal{E}[\nu]\},
\] 
as the CNE is then given by a primal optimizer $\gamma^\ast$ of $\OT(\mu,\nu^\ast,c)$. As established in \cite{acciaio2021cournot}, considering the regularized problem yields good approximations of $\gamma^\ast$. 

For the game with a principal, we note that the minimizer of the cost function $G$ reads  as  $\nu^\ast = (\frac{1}{n_\Ycal}, \ldots, \frac{1}{n_\Ycal})$. Hence, by Corollary~\ref{cor:SCNEex}, a SCNE is given by  $(\gamma^\ast, k^\ast)$, where $\gamma^\ast$ is an optimizer for $\OT (\mu, \nu^\ast,c)$ and $k^\ast \in \left(- \partial_\nu \OT(\mu,\nu^\ast, c) - \nabla \mathcal{E}[\nu^\ast]\right)$. By Proposition~\ref{prop:approx_pr} and Remark~\ref{rem:approx}, we obtain good approximations of $\gamma^\ast$ and $k^\ast$ by computing the optimizers $
\gamma^\eps$ of  $\OT_\eps (\mu, \nu^\ast,c)$ and $ k^\eps \in \left( -\partial_\nu \OT_\eps(\mu,\nu^\ast, c) - \nabla \mathcal{E}[\nu^\ast]\right)$.

We report in Tables \ref{table:CNE} and \ref{table:SCNE}, respectively, the approximate equilibrium strategy $\gamma^\eps$ and the associated approximate equilibrium distribution $\nu^\eps = p_2\#\gamma^\eps$, in the case without a principal (that is taking $k=0$) and with a principal (optimizing over $k$), respectively. Furthermore, for the game with a principal, we report the principal's choice of costs $k^\eps$.

\begin{table}[h]
\centering
	\begin{tabular}{c|cccccccc}
		& 1 & 2 & 3 & 4 & 5 & 6 & 7 & 8   \\
		\hline
		$\gamma^\eps(1,y)$ &  0.0019 & 0.0000 & 0.0030 & 0.0030 & 0.1555 & 0.1336 & 0.0001 & 0.0030 \\
		$\gamma^\eps(2,y)$ &  0.0011 & 0.3115 & 0.0000 & 0.0000 & 0.0868 & 0.0746 & 0.2260 & 0.0000  \\
		\hline
		$\nu^\eps(y)$ &  0.0030 & 0.3115 & 0.0030 & 0.0030 & 0.2423 & 0.2082 & 0.2261 & 0.0030 \\
	\end{tabular}
	\caption{Cournot-Nash equilibrium for the game without a principal}
    \label{table:CNE}
\end{table}

\begin{table}[h]
\centering
	\begin{tabular}{c|cccccccc}
		& 1 & 2 & 3 & 4 & 5 & 6 & 7 & 8  \\
		\hline
		$k^\eps(y)$ &  -2.0357 & 1.8393 & -2.1003 & -1.1610 & 1.7543 & 1.7793 & 1.8393 & -1.9153   \\
		\hline
		$\gamma^\eps(1,y)$ &  0.0000 & 0.0000 & 0.0875 & 0.1250 & 0.0000 & 0.0000 & 0.0000 & 0.0875  \\
		$\gamma^\eps(2,y)$ &  0.1250 & 0.1250 & 0.0375 & 0.0000 & 0.1250 & 0.1250 & 0.1250 & 0.0375  \\
		\hline
		$\nu^\eps(y)$ &   0.1250 & 0.1250 & 0.1250 & 0.1250 & 0.1250 & 0.1250 & 0.1250 & 0.1250  \\
	\end{tabular}
	\caption{Stackelberg-Cournot-Nash equilibrium for the game with a principal}
    \label{table:SCNE}
\end{table}
As expected, the presence of a principal in this game has a clear effect. To wit, in the game without a principal most agents choose one of the actions in $\Ycal_\text{good}:=\{2,5,6,7\}$, which yields $G(\nu^\eps)=0.2502$, whereas in the game with a principal, the equilibrium distribution satisfies $G(\nu^\eps)=0.125$. That the equilibrium distribution in this case is uniform over the actions is clearly due to the principal's choice of $k$, which means that an agent choosing an action from $\Ycal_\text{good}$ is charged an additional cost, whereas an agent choosing a non-preferred action from $\Ycal\setminus \Ycal_\text{good}$ faces reduced costs.

\bibliographystyle{plain}
\bibliography{literatureStackelbergCournotNash}
 
\end{document}